\newtheorem{Main}{\bg Theorem}
\numberwithin{equation}{section}
\renewcommand{\vec}[1]{\mathbf{#1}}
\def\vv{\vec{v}}
\newcommand{\cK}{\mathcal K}
\newcommand{\cN}{\mathcal N}
\newcommand{\cQ}{\mathcal Q}
\def\tcP{\widetilde{\cP}}
\newcommand{\norm}[1]{\bigl\| #1 \bigr\|}
\def\bg{\color{bluegray}}
\definecolor{dgreen}{rgb}{0.1,0.6,0.1}
\definecolor{bluegreen}{rgb}{0.1,0.5,0.2}%{0.6,0.3,0.4}
\definecolor{bpurple}{rgb}{0.74,0.2,0.64}%{0.6,0.3,0.4}
\def\black{\color{black}}
\def\bpurple{\color{bpurple}}
\newtheorem{theo}{\bg Theorem}[section]
\newtheorem{lemma}[theo]{\bg Lemma}
\newtheorem{lemmata}[theo]{\bg Sublemma}
\newtheorem{coro}[theo]{\bg Corollary}
\newtheorem{prop}[theo]{\bg Proposition}
\theoremstyle{definition}
\newtheorem{define}[theo]{\bg Definition}
\newtheorem{remark}[theo]{\bg Remark}
\def\bPi{\mathcal P}
\def\cM{\mathcal M}
\def\hlambda{\hat{\l}}
\newcommand{\Q}{{\mathbb Q}}
\newcommand{\T}{{\mathbb T}}
\newcommand{\R}{{\mathbb R}}
\newcommand{\N}{{\mathbb N}}
\def\xia{{\vartheta}}
\def\Ka{{K}}
\def\tKa{{\tilde{\Ka}}}
\def\one{{\mathbbm{1}}}
\def\eps{{\epsilon}}
\def\breps{{\bar\epsilon}}
\def\DOO{D}
\def\holako{D}
 \newcommand{\Z}{{\mathbb Z}}
\def\tC{\tilde{C}}
\def\tf{\tilde{f}}
\def\tF{\tilde{F}}
\def\tcF{\tilde{\cF}}
\def\tK{\tilde{K}}
\def\tQ{\tilde{Q}}
\def\tcQ{\tilde{\cQ}}
\def\tY{\tilde{Y}}
\def\bdelta{\bar{\delta}}
\def\teps{\tilde{\eps}}
\def\tLambda{{\tilde\Lambda}}
\def\tmu{{\tilde\mu}}
\def\tPi{{\tilde\Pi}}
\def\tTheta{\tilde{\Theta}}
\def\MA{U}%{\mathfrak{A}}}
\def\th{{\theta}}
\def\E{{\mathbb{E}}}
\def\Prob{{\mathbb{P}}}
\def\Pois{{\mathfrak{P}}}
\def\ba{{\mathbf{a}}}
\def\bc{{\mathbf{c}}}
\def\bbc{{\mathbf{c}}}
\def\bD{{\mathbf{D}}}
\def\br{{\mathbf{r}}}
\def\bs{{\mathbf{s}}}
\def\bt{{\mathbf{t}}}
\def\bT{{\mathbf{T}}}
\def\brho{{\boldsymbol{\rho}}}
\def\bPhi{{\boldsymbol{\Phi}}}
\def\brC{{\bar C}}
\def\brF{{\bar F}}
\def\brj{{\bar j}}
\def\brk{{\bar k}}
\def\brcL{{\bar \cL}}
\def\brs{{\bar s}}
\def\brt{{\bar t}}
\def\brfU{{\bar\fU}}
\def\brdelta{{\bar \delta}}
\def\brmu{{\bar\mu}}
\def\th{\underline{\theta}}
\def\cA{\mathcal A}
\def\cC{\mathcal C}
\def\cF{\mathcal F}
\def\cD{\mathcal D}
\def\cI{\mathcal  I}
\def\cK{\mathcal  K}
\def\cL{\mathcal L}
\def\cO{\mathcal O}
\def\cP{\mathcal P}
\def\cR{\mathcal R}
\def\cS{\mathcal S}
\def\fg{\mathfrak{g}}
\def\fp{\mathfrak{p}}
\def\fU{\mathfrak{U}}
\def\hK{{\hat K}}
\def\hcP{{\hat\cP}}
\def\hY{{\hat Y}}
\def\Ht{{\hat{t}}}
\def\hPhi{{\hat\Phi}}
\def\hxi{{\hat\xi}}
\def\trho{\tilde{\rho}}
\def\tE{\tilde{E}}
\def\Card{{\rm Card}}
\def\Ker{{\rm Ker}}
\def\mes{{\rm mes}}
\def\N{{\mathbb{N}}}
\def\Vol{{\rm Vol}}
\def\integers{{\mathbb{Z}}}
\def\reals{{\mathbb{R}}}
\def\a{\alpha}
\def\th{\theta}
\def\l{\lambda}
\def\fC{{\mathfrak{C}}}
\def\fh{{\mathfrak{h}}}
\def\fm{{\mathfrak{m}}}
\def\tfm{{\tilde{\fm}}}
\def\fX{{\mathfrak{X}}}
\def\tfX{{\tilde{\fX}}}
\def\fY{{\mathfrak{Y}}}
\def\cfourier{{\mathfrak{a}}}
\def\carre{\hfill $\Box$}
\author{Dmitry Dolgopyat and Bassam Fayad}
\title[Deviations of ergodic sums]{Deviations of ergodic sums for toral translations\\II. Boxes.}
\patchcmd{\section}{\normalfont}{\normalfont\color{bleu1}}{}{}
\patchcmd{\subsection}{\normalfont}{\normalfont\color{bleu1}}{}{}
\def\bg{\color{bleu1}}
\def\bk{\color{black}}
\definecolor{bleu1}{RGB}{0,57,128}
\begin{document}

\bg \begin{abstract}  \bk 
We study the Kronecker sequence $\{n\alpha\}_{n\leq N}$ on the torus $\T^d$ when $\alpha$ is uniformly distributed on
$\T^d.$ We show that the discrepancy of the number of
visits of this sequence to a random box, normalized by $\ln^d N$, converges as $N\to\infty$ to 
a Cauchy distribution.  The key ingredient of the proof is a Poisson limit theorem for the Cartan action
on the space of $d+1$ dimensional lattices. 
\end{abstract}

\maketitle

 %On the torus $\T^d$, it is shown that the box  discrepancies of the sequence $n \a$  normalized by $(\ln N)^d$ converge in 
%average to a Cauchy distribution. The proof is based 
%on the identification of the contributing terms to the Fourier series of the discrepancy function with geometric events 
%concerning the action of diagonal actions on the space of lattices $\bM={\rm SL}(d,\R)/{\rm SL}(d,\Z)$.

\tableofcontents
\addtocontents{toc}{\protect\setcounter{tocdepth}{1}}

 \section{Introduction} 
\subsection{Equidistribution of Kronecker sequences on $\T^d$}  \label{subsec.intro}
It is well known that the orbits of a non resonant translation on the torus $\T^d=\R^d/\Z^d$ are uniformly distributed. 
A quantitative measure of uniform distribution is given by the discrepancy function:  for a set 
$\cC \subset \T^d$ let 
$$D(\alpha,x,\cC, N)=\sum_{n=0}^{N-1} \one_{\cC}(x+n \alpha)-N \nu(\cC)$$
where $(\a,x) \in \T^d \times \T^d,$  $\one_\cC$ is the 
characteristic function of the set $\cC$ and $\nu$ is the Haar measure on the torus.
(We will sometimes write $\nu_d$ if we want to emphasize the dimension of the torus). 
Uniform distribution of the sequence $x+n\a $ on $\T^d$ is equivalent to the fact that, 
for regular sets $\cC,$ 
$D(\alpha,x,\cC, N)/N \to 0$ as $N \to \infty$. A step further  is the study of the 
rate of convergence to $0$ of $D(\alpha,x,\cC, N)/N$.

Already with $d=1$, it is clear that if $\a \in \T-\Q$ is fixed, the discrepancy $D(\alpha,x,\cC, N)$ displays an oscillatory 
behavior according to the position of $N$ with respect to the denominators of the best rational approximations of  $\a$. 
A great deal of work in Diophantine approximation has been done on estimating the oscillations of the discrepancy function in relation with 
the arithmetic properties of $\a \in \T$, and more generally for $\a \in \T^d$. It is of  common knowledge that  in  studying  
the discrepancies in dimension $1$    the continued fraction algorithm provides crucial help, and that the absence of an 
analogue in higher dimensions makes the study of discrepancies much harder.

 In particular, let
$$\overline{D}(\a, N)=\sup_{\cC \in \mathbb{B}} D(\alpha,0,\cC,N)$$
where the supremum is taken over all sets $\cC$ in some natural class of sets $\mathbb{B}$, for example balls or boxes.
%It is not hard to see that to get an asymptotic distribution it is necessary to consider $\a$ not fixed. For fixed $\alpha$, a 
%frequent approach to the discrepancy in the literature is the study of the supremum 
% of $D_N(B,\a,0)$ over all possible sets of a given class  $\mathcal B$ 
%with for example $ \mathcal B$ the class of all box s or the class of all convex subsets of the unit box .
The case of (straight) boxes was extensively studied, and  growth properties of the sequence $\overline{D}(\a, N)$ were obtained  with a special emphasis on their relations with the Diophantine approximation properties of $\a.$ 
In particular, following earlier advances of \cite{Kok, HL, O, Kh, schmidt} and others, \cite{beck} proves that 
for arbitrary positive increasing function $\phi(n)$ 
\begin{equation} \sum_{n} \frac{1}{\phi(n)}<\infty  \iff \frac{\overline{D}(\a, N)}{(\ln N)^d \phi(\ln\ln N)}   \begin{array}{l}
\text{ is  bounded for}\\
\text{ almost every  }\a \in \T^d.
\end{array} 
   \label{beckbound}  \end{equation}

%$\sum_{n} \phi(n)<\infty .$ 
%\text{ and }
%\lim\sup \frac{\overline{D}(\a, N)}{(\ln N)^d \ln\ln N}=\infty. 
%\begin{equation}  \label{beck}  \lim\sup \frac{\overline{D}(\a, N)}{(\ln N)^d (\ln\ln N)^{1+\eps}}=0
%\text{ and }
%\lim\sup \frac{\overline{D}(\a, N)}{(\ln N)^d \ln\ln N}=\infty. \end{equation}
%Starting with the work of Hardy and Littlewood \cite{} and Ostrowski \cite{} it was conjectured that there exists $\a \in \T^d$ 0
%such that 
%$\overline{D}(\a, N) =\cO ((\ln N)^d)$ and that  $\overline{D}(\a, N) =\Omega ((\ln N)^d)$ for every $\a \in \T^d$. While both 
%these conjectures are still open \margem{check that they are still open} weaker results with relaxed exponents in the power 
%of $\ln N$ were obtained \cite{}, as well as metrical results that concern the behavior of almost every $\a$.  For instance, it %was proved by Beck in the mid of the nineties \cite{beck} that  $\overline{D}(\a, N) =\cO ((\ln N)^{d+\eps})$ 
%for almost every $\a$ improving on a result of Schmidt obtained some 30 years earlier where the exponent was $d+1+\eps%$ (\cite{schmidt}). 
%While the study
%of $\overline{D}$ provides a more precise information since both the translation vector and the box is fixed, t
%Because the translation vector $\a$ is fixed 

In dimension $d=1$, this result %is  fairly easily derived from the study of the continued fraction of $\a$ and Borel Cantelli lemma (this 
is the content of Khinchine theorems obtained in the early 1920's \cite{Kh},
and  follows easily from well-known  results from 
the metrical theory of continued fractions (see for example the introduction of \cite{beck}).
%from  the almost independence of the partial quotients of $\a.$ 
%see the discussion in \cite{beck} Section 1). Moreover, in dimension $1$, the correction factors $(\ln\ln N)^{1+\eps}$ and $(\ln\ln N)$ in the limits can be replaced to yield a complete dichotomy stating that for any increasing function $\varphi(N)$  : the first limit holds (for almost every $\a$) if and only if  $\sum 1/\varphi(N)<+\infty$ and the second limit holds (for almost every $\a$)  if and only if $\sum 1/\varphi(N)=+\infty$. 
The higher dimensional case is significantly more difficult andÊ \eqref{beckbound} was only obtained in the 1990s.
% as opposed to the Khinchine findings dating back to the 1920s. Moreover,  it is still impossible to establish a dichotomy between the lower and upper bounds for the $\limsup$ in the higher dimensional case.    

 The bound in (\ref{beckbound}) 
 focuses on the worst case scenario, that is, on how bad can the discrepancy become along a subsequence of $N$, for a fixed $\a$ in a full measure set.  

The restriction on $\a$ is necessary, since given any $\eps_n \to 0$ it is easy to see that for $\a \in \T$ {\it sufficiently   Liouville}, the discrepancy (relative to intervals) can be as bad as $N_n \eps_n$ along a suitable sequence $N_n$ (large multiples of denominators of very good rational approximations).
It is conjectured that for any $\a$ the discrepancy will be as bad as $(\ln N)^d$  
along a suitable subsequence
but not much is known better than the general lower bound $(\ln N)^{d/2}$ that holds for every sequence on $\T^d$ (\cite{Roth}). Here again, due to the use of continued fractions the latter conjecture can be easily verified in dimension $1$ (cf. discussion in \cite{beck}).

In another direction, but still studying the discrepancy for a fixed $\a$ and along subsequences of $N$, \cite{H, CILB} obtain a Central Limit Theorem in the one dimensional case of circle rotations. 
The results of \cite{H, CILB} apply either for a set of $\a$ of zero measure (so called badly approximable numbers) and a set of times of large density, or for all $\a$ but for a small 
set of times (in both cases, the time sets depend on $\a$).

By contrast, if  one lets both $\a$ and $x$ be random then it is possible to obtain asymptotic distributions of the adequately normalized discrepancy for {\it all} $N$.

%since our proof of Theorem \ref{dimd} shows that for typical $\a$ the quenched limit of $D(\alpha,x,\cC, N)$ does not exist. The reason is that the main contribution to
%discrepancy comes from a small set of so called {\it small denominators} and, at different scales, 
%different small denominators become important. We note that the absence of quenched limits is often observed
%in zero entropy systems \cite{BF, dbconvex, Mar2}. 

This is the approach adopted by Kesten in \cite{K1,K2} (see also \cite{bellman}) where he studied the distribution of the discrepancies related to circular rotations  as $\a$ and $x$ are randomly distributed over the circle.
He proved the following result.
%In this paper we  study  the uniform distribution of $\{n\a\} mod(1)$ as suggested by Kesten, namely we consider the average behavior of  $D_N$ over all $\a$ and $x$ instead %of considering its maximum over all {\it box s} while $\a$ and $x$ are fix ed. 

%For one dimensional case  of circle rotations, Kesten  proved the following result.

\medskip 

\noindent {\bf \bg Theorem \cite{K1, K2}.} 
{\it Let $0<a<b<1$ and define $$D(\alpha,x,[a,b],N)=\sum_{n=0}^{N-1} 
\one_{[a,b]}(x+k\alpha)-N (b-a).$$ There is a number
$\rho=\rho(b-a)$ such that if $(\alpha,x)$ is uniformly distributed on $\T^2$ then
$\frac{D(\alpha,x,[a,b],N)}{\rho \ln N}$ converges to the standard Cauchy distribution, that is,
$$ \nu_2 \left\{ (\alpha,x): \frac{D(\alpha,x,[a,b],N)}{\rho \ln N}\leq z \right\} \to \fC(z) $$ 
where $\nu_2$ is the Lebesgue measure on $\T^2$ and
\begin{equation}
\label{DefCauchy}
 \fC(z)=\frac{\tan^{-1} z}{\pi} +\frac{1}{2}. 
 \end{equation}
Moreover $\rho(b-a)\equiv \rho_0$ is independent of $b-a$ if $b-a\not\in \Q$ and 
it has non-trivial dependence on $b-a$ if $b-a\in \Q.$}

\medskip 
%Our goal is to extend this result to higher dimensions. In \cite{dbconvex} it was conjectured that if $\Omega$ is {\it semialgebraic},
%that is it is defined by a finite number of algebraic inequalities, then  there is a sequence $a_n=a_n(\Omega)$ such that for a random translation
%of a random torus $D_N/a_N$ has a limiting distribution. Here 
%$$ D_N(x, \alpha, L)=\sum_{k=0}^{N-1} \one_\Omega(x_k) -N vol(\Omega)$$
%where $x_k=x+k\alpha \mod L,$ $L=A\Z^d$ and we assume that the triple 
%$(A, x, \alpha)$ has a smooth density of compact support.

%Note that there are two equivalent points of view. Either we fix $\Omega$ and change the torus $\T^d=\R^d/L$
%or we can fix the torus $\T^d=\R^d/\Z^d$ and change the set $\Omega_A=A^{-1} \Omega.$ 

Our goal is to extend this result to higher dimensions. As in the case of other results related to discrepancies of Kronecker sequences, the main difficulty comes from the absence of a continued  fraction algorithm that was also the main tool in Kesten's proof. 

Before we describe our approach, let us mention that there  are two natural counterparts to intervals in higher dimension: balls and boxes. 
In \cite{dbconvex} we considered the case where $\cC$ is analytic and strictly convex
and showed that $D(\alpha,x,\cC, N) /N^{(d-1)/2d}$ has a limiting distribution (which however depends on $\cC$ and is not a standard stable law).

Here we address the case where $\cC$ is a box and show that
$\frac{D(\a,x,\cC, N)}{(\ln N)^d}$ converges to a Cauchy distribution. To avoid the irregular behavior of the limiting distribution 
on the size of the considered box, as is the case in Kesten's result for example, 
we introduce an additional randomness to the  parameters, by letting the lengths of the box's sides fluctuate. 
For a reason that will be explained in  the sequel we also have to apply (arbitrarily small) random linear deformations on the boxes. 

More precisely, for $u=(u_1,\ldots,u_d)$ with $0<u_i<1/2$  for every $i$, we  define a {\it box} on the $d$-torus by 
$C_{u}=[-u_1,u_1]\times \ldots [-u_d,u_d]$. Fix a small $\eta>0$ let
\begin{equation}
\label{GEta}
G_\eta =\{ \MA=(a_{ij}) \in  {\rm SL}_d(\R) :  |a_{i,i}-1|<\eta, \forall i \text{ and }|a_{i,j}|<\eta \ \forall j\neq i\}. 
\end{equation}
We denote by  $U \cC_{u}$
the image of $\cC_{u}$ by a matrix $U \in G_\eta$. 
Next, each length $u_i$ is assumed to be uniformly distributed in a segment $[v_i,w_i]$ where $v_i,w_i$ are fixed such that $0<v_i<w_i<1/2$ for every  $i$.

Let $$X=\left\{(\a,x, u,\MA ) \in \T^{d} \times \T^d \times ([v_1,w_1] \times \dots [v_d,w_d]) \times G_\eta \right\}$$
and denote by $\lambda$ the normalized Lebesgue measure on $X$.  
For $\xia=(\a,x,u,U) \in X$, define the following discrepancy function 
\begin{equation}\label{def.Vn} D(\xia,N) = \# \{1\leq m \leq N : (x+
m\a)  {\rm  \  mod  \ } 1 \in U C_{u}  \} - 2^d \left(\Pi_i u_i\right) N.\end{equation}

\begin{Main} \label{dimd} 
For %any $\eta>0$ and 
any $z \in \R$ we have
\begin{equation} \label{cauchy} \lim_{N \to \infty} \lambda \{\xia \in X  \ / \ \frac{ D(\xia,N)}{(\ln N)^d} \leq z \} 
 = \fC(\brho z) \end{equation}
where $\fC$ is defined by the Cauchy distribution function \eqref{DefCauchy}
and
{\footnotesize
\begin{equation}
\label{DefBRho}
\brho=\frac{1}{\zeta(d+1) d!}
\left(\frac{2}{\pi}\right)^{d} 
\int\dots \int \left|
\sum_{j=1}^{\infty} \frac{\left[\Pi_{i=1}^d \sin(2 \pi j \eta_i)\right]  \sin(\pi j \eta_{d+1}) \cos (2\pi j \eta_{d+2})}{j^{d+1}} \right|
d\eta_1\dots d\eta_{d+2}
\end{equation}}
where $\displaystyle \zeta(d+1)=\sum_{n=1}^\infty \frac{1}{n^{d+1}}$ is the Riemann zeta function.
\end{Main}

%In the language of probability theory our result is an {\it annealed} limit theorem since we allow both
%$\a$ and $x$ to be random. The study of $\overline D$ discussed above, corresponds to the   {\it quenched} situations, that is ask what happens when $\a$ is fixed and $x$ 
%!TEX encoding = UTF-8 Unicodeis random.

 As it will be clear from the proof, the same statement holds if $\lambda$ is replaced by any probability measure on $X$ with smooth density. Actually, we could replace the two perturbations of the box, the fluctuation of the sides' lengths and the application of an ${\rm SL}_d(\R)$ matrix, by a single random linear perturbation, or by $rU$ with $r$ smoothly distributed in a neighborhood of $1$ and $U \in G_\eta$. We prefer to keep the perturbations split because their roles in the proof are quite different.  

As it is alluded in the title, the discrepancy is a special case of ergodic sums $\sum_{n=0}^{N-1} A(x+n\a).$
We refer the reader to a recent survey \cite{DFsurvey} for more results and open questions on this subject.

Our proof of Theorem \ref{dimd} shows that
for typical $\a$, a {\it quenched} limit (that is, with fixed $\a,$ and $x$ uniformly distributed on $\T^d$) 
of $D(\alpha,x,\cC, N)$ does not exist even if we would allow the normalizing sequence to depend on $\a.$
The reason is that the main contribution to 
the discrepancy comes from a small set of so called {\it small denominators} 
and, at different scales, 
different small denominators become important. Also, the number of the small denominators 
of a given size fluctuates. Therefore there is a sequence of times when the discrepancy is dominated by
a single small denominator, so, after a proper normalization we get limiting distribution of  compact support.
On the other hand, we can consider a sequence of times when there are many small denominators of approximately
equal strength, in which case the limiting distribution will be Gaussian. Since we can obtain different
limit distributions along different sequences, no limit exists as $N\to\infty$
(we refer the reader to \cite{DG, DS} for a more detailed version of this argument). 
We note that the absence of quenched limits is often observed
in zero entropy systems \cite{Buf, BF, BS, dbconvex, FK, GM, Mar2}. 
{To finish, we mention  that the paper \cite{DFsurvey} can be consulted for 
an introduction to problems and results related to limit theorems 
%in distribution 
for toral translations, an active domain of research in the recent years.}

 \subsection{Plan of the paper}   
We now give a description of the paper's content and of  the main ingredients in the proofs. 
%\bigskip 
%\noindent {\bf Plan of the paper.}   

Section \ref{sec2} contains preliminaries and reminders.
%results and reminders on probability laws and on the dynamics on the space of lattices. 
In \S \ref{SSPP} we recall the representation of the Cauchy distribution in terms of a Poisson process. 
%is concerned with deriving Cauchy laws for sums of variables related to a Poisson process. 
In \S \ref{SSRI} we present Rogers formulas that allow to compute the average and higher moments
for the number of points of a random
lattice in a given domain. 
%as well as 
%give the average on $\bM=SL_3(\reals)/SL_3(\integers)$ of Siegel transforms of $\R^3$ functions, such as the functions that count the number of vectors or pairs of non collinear vectors  
%of a lattice  in $\bM$ that are in some given subset of $\R^3$.  
%use exponential mixing of the geodesic flow on $\bM$ to show that a typical (in the Lebesgues sense) long $1$-
%dimensional submanifold of the unstable horocycle is  equally distributed in $\bM$ at a polynomial rate in its length.  

In Section \ref{sec3}, harmonic analysis of the discrepancy's Fourier series allows us to 
isolate the frequencies that make essential contributions to the discrepancy 
and to show that they must be resonant with $\a.$
% namely, that  $(a_1k+b_1l)(a_2k+b_2l) \norm{k \a+l\beta}$ should be small. 
After eliminating a small measure set of vectors $\a,$ for which the resonances are too strong  we obtain that the good normalization for the discrepancy is 
$(\ln N)^d.$ 
The main outcome of Section \ref{sec3} is to reduce the proof of Theorem \ref{dimd} to that of Theorem \ref{ThPoisResd} 
establishing a Poisson limit theorem for the distribution of the small denominators that appear in the (resonant) Fourier terms that contribute to the discrepancy.

Namely, for each $\breps>0$ we need to prove the Poisson limit theorem  for the sequence
$$(\star)\quad \left\{(\ln N)^d  \prod_i \bar{k}_i   \norm{\langle k,\a \rangle}, N\langle k,\a \rangle \text{ mod } 2 , \{ \bar{k}_1 u_1\},\ldots,\{ \bar{k}_d u_d\},  \{\langle k,x \rangle\}  \right\}_{k \in Z(\xia, N)} $$
where $\langle \cdot,\cdot \rangle$, $\|\cdot\|$ and $\{ \cdot\}$ denote respectively the Eucledian scalar product, the closest distance to integers and the fractional part, and where 
$$\brk_i=a_{i,1} k_1+\dots+a_{i,d}k_d, $$
\begin{multline*} \label{defZ} Z(\xia,N) = \left\{ k \in W(\xia,N): \bar k_1>0 \text{ and } \exists m \in \Z  {\rm \ such \ that \ }  \right. \\ \left.  k_1 \wedge \ldots \wedge k_d  \wedge m =1  {\rm \ and \ }  \norm{\langle k,\a \rangle}= |\langle k,\a \rangle+m| \right\} . \end{multline*} 
and 
\begin{multline*} W(\xia,N) := \left\{k\in \Z^d : \left|\prod_{i=1}^d \bar{k}_i\right|<N,   \right. \\ 
\left. \forall i=1,\ldots,d, \quad  |\bar{k}_i|\geq 1, \;\;
\left|\prod_{i=1}^d \bar{k}_i \right|\, \norm{\langle k,\a \rangle} \leq \frac{1}{\breps (\ln N)^d} \right\}  . 
\end{multline*}

In Section \ref{sec.reduction}, we reduce the Poisson limit of the first two coordinates 
of $(\star)$ to a Poisson limit theorem 
(Theorem \ref{ThPLat}) for the number
of visits to a cusp by orbits of the Cartan action on   the space of $d+1$ dimensional 
unimodular lattices
$\cM=SL_{d+1}(\reals)/SL_{d+1}(\integers)$. 
To prove the Poisson limit for all components of $(\star)$, we need to show that the remaining components are asymptotically independent of the first two. This requires an extra work that is done in Proposition \ref{ThLacun}, where the argument is similar to the original analysis of Kesten.

The proof of Theorem \ref{ThPLat} occupies  Sections \ref{sec.poisson.abstract}, 
\ref{SSSLMix},  and
\ref{SSDenom}. 
In Section  \ref{sec.poisson.abstract}, using martingale methods, we  establish an abstract Poisson limit theorem 
that is well adapted to variables coming from dynamical systems.  Establishing Poisson limit theorems for dynamical systems
is a subject with rich literature (see \cite{AV, CC, DGS, D, FHN, Hir1, Hir2, Pit} and the references therein). 
 The most relevant work for our purposes is the paper \cite{D} where a Poisson Limit Theorem is 
proven for partially hyperbolic systems assuming that the images of local unstable manifolds become 
equidistributed at sufficiently fast rate.

In the present setting there are two new difficulties. First, the geometry of the cusp is quite complicated
(especially for large $d$), in the sense that we do not know what is the order of $\brk_i$s that contribute 
to the resonances in ($\star$). However Rogers 
identities (\cite{Siegel, Rogers}) provide sufficiently strong control to handle
this issue.  
Secondly, 
we need to consider the action
of the full diagonal subgroup of $SL_{d+1}(\R)$ because, for a typical resonance,
$\brk_1,\brk_2\dots,$and $\brk_d$ have very different sizes.
For such higher rank actions  there is no notion of "unstable manifold" because
there is no notions of "future" and "past" and going to infinity in different Weyl chambers gives different 
expanding and contracting directions. In the present setting, we are able to prove a Poisson limit
theorem using the fact that the long leaves of the Lyapunov foliations 
become uniformly distributed at a polynomial rate, except, possibly,
for a small measure set. The fact that the we need to prove the Poisson Limit Theorem for higher rank 
subgroups constitutes the main novelty of Sections 
\ref{sec.poisson.abstract}--\ref{SSDenom}. 

The relevant equidistribution results for unipotent subgroups of $SL_{d+1}(\reals)$ acting on $\cM$ are presented in Section~\ref{SSSLMix}. 
% we discuss ergodic properties of various one parameter subgroups of
%$SL_3(\reals)$ 
To exploit these equidistribution results in the proof of $(\star)$, we 
%he possible existence of small exceptional sets,requires us to 
introduce additional parameters in the form of small affine deformations of
the box. Indeed, if we work with the straight boxes we would have to establish a Poisson Limit Theorem for lattices having a smooth distribution on a positive codimension submanifold of $\cM$; while with the randomly slightly tilted boxes we have to establish a Poisson Limit Theorem for lattices having a smooth density
on $\cM$.

In Section \ref{SSDenom} the conditions of the abstract theorem of Section \ref{sec.poisson.abstract}
are verified for the Cartan action on $\cM$ using the equidistribution results of Section \ref{SSSLMix}.

In section \ref{ScSmall} we discuss the discrepancy for the number of visits
to boxes of small size $N^{-\gamma}$, $\gamma<1/d$, and we obtain a similar result to the case $\gamma=0$ that corresponds to our main Theorem \ref{dimd}.   The case $\gamma=1/d$ was studied in \cite{Mar1} where a limit distribution was obtained without any normalization. 
In the case $\gamma>1/d$, the problem 
is vacuous since most orbits do not visit a ball of size 
$N^{-\gamma}$ before time $N$ (by the Borel Cantelli Lemma).

In Section \ref{ScCont}
we discuss the continuous time case, that is, we study the discrepancies corresponding to linear flows on the torus. We show that in the case of 
boxes the discrepancy is bounded in probability. Namely, the indicator function of a box is a coboundary
with probability one. We actually get convergence in distribution of the discrepancies without any normalization.  However, the method used to prove Theorem \ref{dimd} gives a Cauchy limit theorem for continuous discrepancies relative to {\it balls,} and this only in dimension $d=3$. Indeed, the latter  is in sharp and curious contrast with the higher dimension case obtained in  \cite{dbconvex} that states that for $d\geq 4$ the continuous discrepancies relative to balls  converge in distribution after normalization by a factor  $T^{(d-3)/2(d-1)}$. 

Finally, 
some technical estimates are collected in the appendices.

 \section{Preliminaries} \label{sec2} 
 \subsection{Poisson processes.} 
\label{SSPP}

Recall that a random variable $N$ has Poisson distribution with parameter $\lambda$ if
$\Prob(N=k)=e^{-\lambda} \frac{\lambda^k}{k!}.$ Now easy combinatorics shows the following facts 

(I) If $N_1, N_2\dots N_m$ are independent random variables and each $N_j$ has Poisson distribution with parameter
$\lambda_j$, then $\displaystyle N=\sum_{j=1}^m N_j$ has 
Poisson distribution with parameter $\displaystyle \sum_{j=1}^m \lambda_j.$

(II) Conversely, take $N$ points distributed according to a Poisson distribution with parameter $\lambda$ and color each
point independently with one of $m$ colors where color $j$ is chosen with probability $p_j.$ Let $N_j$ be the
number of points of color $j.$ Then $N_j$ are independent and $N_j$ has Poisson distribution with parameter
$\lambda_j=p_j\lambda.$

Now let $(\fX, \fm)$ be a measure space. By a Poisson process on this space we mean a random point process on $\fX$
such that if $\fX_1, \fX_2\dots \fX_m$ are disjoint sets and $N_j$ is the number of points in $\fX_j$
then $N_j$ are independent Poisson random variables with parameters $\fm(\fX_j)$ (note that this definition is
consistent due to (I)). We will write $\{x_j\}\sim \Pois(\fX, \fm)$ to indicate that $\{x_j\}$ is a Poisson process with parameters
$(\fX, \fm).$ If $\fX\subset \reals^d$ and $\fm$ has a density $f$ with respect to the Lebesgue measure we
say that $f$ is the intensity of the Poisson process.
 
 The following properties of the Poisson process are straightforward consequences of (I) and (II) above, and their proofs can be  found in the monographs \cite{King, SamT}.
\begin{lemma}
\label{LmPT} 
(see \cite{King}, \S\S 2.3 and 5.2)

(a) If $\{\Theta_j'\}\sim\Pois(\fX, \fm')$ and $\{\Theta_j''\}\sim\Pois(\fX, \fm'')$ are independent then
$$\{\Theta_j'\}\cup \{\Theta_j''\}\sim \Pois(\fX, \fm'+\fm'').$$

(b) If $\{\Theta_j\}\sim \Pois(\fX, \fm)$ and $f:\fX\to \fY$ is a measurable map then
$\{f(\Theta_j)\}\sim\Pois(\fY, f^{-1}\fm).$

(c) Let $\fX=\fY\times Z,$ $\fm=\nu\times \lambda$ where $\lambda$ is a probability measure on $Z.$
Then $\{(\Theta_j, \Gamma_j)\}\sim \Pois(\fX, \fm)$ iff $\{\Theta_j\}\sim \Pois(\fY, \nu)$ and 
$\Gamma_j$ are random variables independent from $\{\Theta_j\}$ and from 
each other and distributed according to $\lambda.$

(d) If in (c) $\fY=Z=\reals$ then
$\tTheta=\{\Gamma_j \Theta_j\}$ is a Poisson process. If  $\{\Theta_j\}$  has measure $f(\theta)d\theta$ then $\tTheta$ has  measure $\tf(\theta)d\theta$ with 
$$ \tf(\theta)=
\E_\Gamma \left(f\left(\frac{\theta}{\Gamma}\right)\frac{1}{|\Gamma|}\right) .$$

\end{lemma}

Next, recall \cite[Chapter XVII]{Fel} that the Cauchy distribution is the unique (up to scaling) symmetric distribution such that if $Z, Z'$ and $Z''$ are independent random variables with that distribution then $Z'+Z''$ has the same distribution as $2Z$.   This gives  the following representation of the Cauchy distribution.

\begin{lemma}
\label{LmPoissonCauchy}

(a) If $\lbrace \Theta_j \rbrace$ is a Poisson process on $\reals$ with measure $c \theta^{-2} d\theta$   then 
$$ \lim_{\delta\to 0} \frac{1}{\rho}\sum_{\delta<|\Theta_j|} \Theta_j $$
has a standard  Cauchy distribution, with $\rho={c\pi}$. 

% \margem{added $\frac{1}{\rho}$. \\ We need the function $\bar \rho$ such that $\rho=\bar{\rho}(c)$} 

%(b) If $ \lbrace \Theta_j \rbrace$ is a Poisson process on $\reals_+$ with constant intensity $c$ and if $\xi_j$ are
%iid random variables having symmetric distribution with compact support then there exists $\rho>0$ such that
%$$ \lim_{M \to \infty} \frac{1}{\rho} \sum_{\Theta_j<M} \frac{\xi_j}{\Theta_j} $$
%has Cauchy distribution. Moreover $\rho=\bar{\rho}(c\E(|\xi|)/2)$. 

(b) If $ \lbrace \Theta_j \rbrace$ is a Poisson process on $\reals$ with constant intensity $c$ and if $\Gamma_j$ are
iid random variables having a symmetric distribution with compact support then 
$$ \lim_{\breps \to 0} \frac{1}{\rho} \sum_{|\Theta_j|<\breps^{-1}} \frac{\Gamma_j}{\Theta_j} $$
has a standard Cauchy distribution with $\rho=c\E(|\Gamma|) \pi$.

\end{lemma}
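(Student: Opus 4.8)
The plan is to compute the characteristic functions of the truncated sums by Campbell's exponential formula for Poisson integrals, recognize their limits as the characteristic function $t\mapsto e^{-|t|}$ of the standard Cauchy law (up to scale), and separately verify that the truncated sums converge almost surely, so that the limiting random variable is well defined.

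For part (a), I would fix $\delta>0$ and set $S_\delta=\sum_{|\Theta_j|>\delta}\Theta_j$. The restriction of the process to $\{|\theta|>\delta\}$ is Poisson with the \emph{finite} intensity $c\,\theta^{-2}\mathbf{1}_{\{|\theta|>\delta\}}\,d\theta$ (total mass $2c/\delta$), so $S_\delta$ is an almost surely finite sum and
$$\E\bigl[e^{itS_\delta}\bigr]=\exp\!\left(c\int_{|\theta|>\delta}\bigl(e^{it\theta}-1\bigr)\frac{d\theta}{\theta^2}\right).$$
The imaginary part of the exponent vanishes because $\sin(t\theta)/\theta^2$ is odd and the cutoff $\{|\theta|>\delta\}$ is symmetric --- this is exactly where the symmetric truncation is used. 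Letting $\delta\to0$, the real part converges to $c\int_{\R}(\cos(t\theta)-1)\theta^{-2}\,d\theta=-c\pi|t|$, using the classical identity $\int_{\R}(1-\cos u)u^{-2}\,du=\pi$. Hence $\E[e^{itS_\delta}]\to e^{-c\pi|t|}$, the characteristic function of the Cauchy distribution of scale $c\pi$, which gives the claim with $\rho=c\pi$. To upgrade this to almost sure convergence of $S_\delta$, I would take $\delta_n\downarrow0$ and apply Kolmogorov's three-series theorem to the increments $S_{\delta_{n+1}}-S_{\delta_n}=\sum_{\delta_{n+1}<|\Theta_j|\le\delta_n}\Theta_j$, which are independent, have mean zero by symmetry, and have variances $\int_{\delta_{n+1}<|\theta|\le\delta_n}\theta^2\cdot c\theta^{-2}\,d\theta=2c(\delta_n-\delta_{n+1})$, summing to $2c\delta_1<\infty$.

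For part (b), I would first invoke Lemma~\ref{LmPT}(b) to see that $(\Theta_j,\Gamma_j)$ is a Poisson process on $\R\times\R$ with intensity $c\,d\theta\times\nu(d\gamma)$, $\nu$ being the common law of the $\Gamma_j$. For fixed $M$ the restriction to $\{|\theta|<M\}$ has finite mass, $T_M:=\sum_{|\Theta_j|<M}\Gamma_j/\Theta_j$ is an almost surely finite sum, and Campbell's formula gives
$$\E\bigl[e^{itT_M}\bigr]=\exp\!\left(c\int_{\R}\int_{|\theta|<M}\bigl(e^{it\gamma/\theta}-1\bigr)\,d\theta\;\nu(d\gamma)\right).$$
The substitution $\xi=\gamma/\theta$ turns $\{|\theta|<M\}$ into the \emph{symmetric} set $\{|\xi|>|\gamma|/M\}$ and gives $\int_{|\theta|<M}(e^{it\gamma/\theta}-1)\,d\theta=|\gamma|\int_{|\xi|>|\gamma|/M}(e^{it\xi}-1)\xi^{-2}\,d\xi$; as in (a) the imaginary part drops out, and letting $M\to\infty$ --- dominated convergence being justified by the compact support of $\Gamma$, which bounds $|\gamma|$ --- the exponent tends to $-c\,\E(|\Gamma|)\,\pi|t|$. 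Therefore $\E[e^{itT_M}]\to e^{-c\E(|\Gamma|)\pi|t|}$, which yields the claim with $\rho=c\,\E(|\Gamma|)\pi$; almost sure convergence of $T_M$ follows as before by grouping the terms with $|\Theta_j|\in[n,n+1)$ into independent mean-zero blocks of variance at most $2c\,\E(\Gamma^2)/n^2$. Alternatively, the mapping theorem of Lemma~\ref{LmPT}(a) shows $\{\Gamma_j/\Theta_j\}$ is Poisson with intensity $c\,\E(|\Gamma|)\,\xi^{-2}$, reducing (b) to (a) once the slight mismatch between the cutoffs $\{|\Theta_j|<M\}$ and $\{|\Gamma_j/\Theta_j|>\delta\}$ is absorbed, again using compact support.

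These are essentially textbook manipulations, so I expect the only points demanding care to be: (i) the vanishing of the imaginary part of the characteristic exponent, which hinges on the cutoff being symmetric --- in $\theta$ for (a), and in $\xi$ after the change of variables for (b) --- so that the integrals are genuine principal values; and (ii) the interchange of limit and integral in (b), which exploits the compact support hypothesis on $\Gamma$. I do not anticipate any essential obstacle beyond this bookkeeping.
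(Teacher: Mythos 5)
Your proof is correct. The paper itself does not prove part (a): it refers to the monographs \cite{Resnick, SamT} for both lemmas of that subsection, and for part (b) it gives only the one-line remark that (b) follows from (a) together with Lemma~\ref{LmPT}(a) and (d), which is precisely the route you sketch as ``Alternatively'' at the end --- map $\Theta_j\mapsto 1/\Theta_j$ to get intensity $c\,\theta^{-2}$, then apply (d) to the independent marks $\Gamma_j$ to get intensity $c\,\E|\Gamma|\,\theta^{-2}$, then invoke (a). You are right to flag the cutoff discrepancy in that reduction: applied literally, (a) truncates on $\{|\Gamma_j/\Theta_j|>\delta\}$ whereas (b) truncates on $\{|\Theta_j|<M\}$, and reconciling the two does use the compact support of $\Gamma$; the paper does not remark on this. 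Your primary argument (Campbell's exponential formula applied directly to the truncated sums, with the symmetric cutoff killing the imaginary part of the exponent, the identity $\int_\R(1-\cos u)u^{-2}\,du=\pi$ identifying the limit, and Kolmogorov's criterion handling almost sure convergence) is a correct, self-contained derivation of both parts and has the advantage of being explicit where the paper defers to references. The two routes are substantively equivalent, and everything checks out.
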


We provide a
sketch of the proof to illustrate the idea of the argument. For more detailed presentation we refer the readers to
\cite[Theorem 1.4.2]{SamT}, or \cite[Appendix B]{DS}.

\noindent {\it Sketch of proof.} To see part (a),  let $\{U_j'\}, \{U_j''\}$ and  $\{U_j\}$ be independent Poisson processes with measure $c.$
We want to show that $\sum \frac{1}{U_j'}+\sum \frac{1}{U_j''}$ have the same distribution as $\sum \frac{2}{U_j}$. But 
$$ \sum \frac{1}{U_j'}+\sum \frac{1}{U_j''}=\sum_{y\in \{U_j'\}\cup \{U_j''\}} \frac{1}{y}$$
and we finish by observing, in light of Lemma \ref{LmPT} (a) and (b), that both $\{U_j'\}\cup \{U_j''\}$ and $\{\frac{U_j}{2}\}$ are Poisson processes with intensity $2c.$

The proof of part (b) follows the same idea as the proof of part (a), but now we use parts (b), (c) and (d) of Lemma \ref{LmPT}.

 \subsection{Siegel and Rogers identities} 
\label{SSRI}
For $d\in \N,$ $d\geq 2$, denote  the space of unimodular $(d+1)$-dimensional lattices  by 
$\cM_{d+1}= {\rm SL}_{d+1}(\R) / {\rm SL}_{d+1}(\Z)$ and let $\mu$ be the Haar measure on $\cM_{d+1}$. 
Denote    
\begin{equation}
\label{C1C2}
 \bc_1=\zeta(d+1)^{-1}, \quad \bc_2=\zeta(d+1)^{-2}, \quad \text{where }
\zeta(d+1)=\sum_{n=1}^\infty n^{-(d+1)} 
\end{equation}
is the Riemann zeta function.

The following identities (see \cite{Siegel, Rogers} as well as \cite{Mar1, V}) play an important role in our argument. 
Let $f, f_1, f_2$ be piecewise smooth functions with compact support on $\reals^{d+1}.$
For a lattice $\cL \subset \cM_{d+1}$, we say that a vector in $\cL$ is prime if it is not an integer multiple of another vector in $\cL$. Let
$$ F(\cL)=\sum_{\vv\in \cL, \text{ prime}} f(\vv), \quad
\brF(\cL)=\sum_{\vv_1\neq \pm \vv_2\in \cL, \text{ prime } } f_1(\vv_1) f_2(\vv_2). $$
$F$ is called {\it Siegel transform of $f$} so we will sometimes denote $F$ by $\cS(f).$
\begin{lemma} 
\label{LmRI} We have 

\begin{align*} (a)\quad \int_{\cM} F(\cL) d\mu(\cL)&=\bc_1 \int_{\reals^{d+1}} f(x) dx, \\
(b)\quad \int_{\cM} \brF(\cL) d\mu(\cL)&=\bc_2 \int_{\reals^{d+1}} f_1(x) dx \int_{\reals^{d+1}} f_2(x) dx. \end{align*}
(c) Consequently
\begin{multline*} \int_{\cM} F^2(\cL) d\mu(\cL)=\bc_1\int_{\reals^{d+1}} f^2(x) dx \\+\bc_1\int_{\reals^{d+1}} 
f(x)f(-x) dx+\bc_2 \left(\int_{\reals^{d+1}} f(x) dx\right)^2. \end{multline*}
\end{lemma}

 \section{  Negligible contribution of non-resonant terms} \label{sec3}

As we already mentioned,  the proof of the main Theorem~\ref{dimd} is obtained by applying the results of
\S \ref{SSPP} to a sum of resonant terms in the Fourier series of $\DOO(\xia,N)/(\ln N)^d$. 
But first we need to isolate 
the resonant terms that contribute to the limiting distribution. This will be done in the current section, the outcome of which is summarized in the Proposition \ref{prop.reduction} below. 
%The rest of the section is devoted to the proof 
The proof of Proposition \ref{prop.reduction} is independent from the rest of the paper and can be skipped in a first reading.

 \subsection{} \label{res.reduction}  

Recall from \S \ref{subsec.intro}
the definition
$$X=\left\{(\a,x, u,\MA) \in \T^{d} \times \T^d \times ([v_1,w_1] \times \dots [v_d,w_d]) \times G_\eta \right\}$$
where $G_\eta$ is given by \eqref{GEta}.
For $\xia \in X$ and $k\in \Z^d$, we use the notation 
\begin{equation}
\label{sidedual}
\brk_i=a_{i,1} k_1+\dots+a_{i,d}k_d 
\end{equation}

Writing the Fourier series of the characteristic function of a box we get for the discrepancy $\DOO(\xia,N)$ defined in \eqref{def.Vn} 
$$\DOO(\xia,N)  = \sum_{k\in \Z^d-\{0\}} U_{k}(\xia,N)$$
where
$$U_{k}(\xia,N)=\cfourier \prod_i \left(\frac{\sin\left(2\pi \bar{k}_i u_i\right)}{\bar{k}_i} \right)
\frac{\sin(\pi N \langle k,\a \rangle)}{\sin(\pi \langle k,\a \rangle)}\cos(2\pi \langle k,x \rangle+\varphi_{k,\a, N})$$
and $\varphi_{k,\a,N}=  \dfrac{\pi(N-1)\langle k,\a \rangle}{2}$, $\cfourier=\dfrac{1}{\pi^d}$, 
$\displaystyle \langle k,x \rangle=\sum_{i=1}^d k_ix_i$.

Fix  a small number $\breps>0.$
For $y \in \R$ we use the notation $\|y\|$ for the closest distance of $y$ to the integers. In all this section, we will use the notation $u=O(v)$, or equivalently $u\ll v$, when  $|u|\leq C|v|$ for some constant $C$ that does not depend on $\breps$ or $N$.

 Define 
$W(\xia,N)=W(\a,(a_{i,j}),N)$
by
\begin{multline} \label{defW} W(\xia,N) := \left\{k\in \Z^d : \left|\prod_{i=1}^d \bar{k}_i\right|<N,   \right. \\ 
\left. \forall i=1,\ldots,d, \quad  |\bar{k}_i|\geq 1, \;\;
\left|\prod_{i=1}^d \bar{k}_i \right|\, \norm{\langle k,\a \rangle} \leq \frac{1}{\breps (\ln N)^d} \right\}  . \end{multline} 

Next,  we let
\begin{multline} \label{defZ} Z(\xia,N) = \left\{ k \in W(\xia,N): \bar k_1>0 \text{ and } \exists m \in \Z  {\rm \ such \ that \ }  \right. \\ \left.  k_1 \wedge \ldots \wedge k_d  \wedge m =1  {\rm \ and \ }  \norm{\langle k,\a \rangle}= |\langle k,\a \rangle+m| \right\} . \end{multline} 

Then define
\begin{equation}\label{eq.d7}
\bar{\holako}(\xia,N) = \sum_{k \in Z} \frac{\Gamma_{k}(\xia,N)}{\Omega_{k}(\xia,N)}  \end{equation}
where
\begin{equation}
\label{DefTheta}
\Omega_{k}(\xia,N)=\left(\prod_{i=1}^d \bar{k}_i\right) \norm{\langle k,\a \rangle}  (\ln N)^d,
\end{equation}
\begin{equation}
\label{GammaPhi}
 \Gamma_{k}(\xia,N)=\frac{2 \cfourier}{\pi} {\phi \left( \bar{k}_1 u_1,\ldots, 
 \bar{k}_d u_d, N\langle k,\a \rangle,\langle k,x \rangle+\varphi_{k,\a,N}\right)}, 
\end{equation} 
and 
\begin{equation}
\label{DefGamma}
\phi(\eta_1,\ldots,\eta_d,\eta_{d+1},\eta_{d+2})=
\end{equation}
$$ \sum_{j=1}^{\infty} \frac{\left[\Pi_{i=1}^d \sin(2 \pi j \eta_i)\right]  \sin(\pi j \eta_{d+1}) \cos (2\pi j \eta_{d+2})}{j^{d+1}}. $$

The purpose of this section is show that  $\left|\dfrac{\holako}{(\ln N)^d}-\bar{\holako}\right|$ is small in probability.

\begin{prop} \label{prop.reduction} 
For any $\upsilon>0$, if we take $\breps>0$ sufficiently small and then $N$ 
sufficiently large we have 
\begin{equation} \label{conv.dist} \lambda\left( \left\{ \xia \in X : \left| \frac{\holako(\xia,N)}{(\ln N)^d} 
- {\bar{\holako}(\xia,N)} \right|  \geq \upsilon \right\} \right)\leq \upsilon . \end{equation}
\end{prop}

\begin{remark}{ \it Differences with the discrepancies relative to convex  sets.} 
Proposition \ref{prop.reduction} identifies the normalization term $(\ln N)^d$ and the resonant terms in the Fourier series of the discrepancy function that contribute to its limiting distribution after normalization. These terms involve multiplicative 
%\margem{I added this remark to answer comment 9 of the referee /b} 
small denominators of the form $\displaystyle \left(\prod_i \bar{k}_i  \right)\norm{\langle k,\a \rangle}$ with 
$\displaystyle \left|\prod_i \bar{k}_i\right| \leq N$. 
The fact that frequency vectors $k$ coming from many different scales
in the set $\displaystyle \left|\prod_i \bar{k}_i\right| \leq N$ 
yield  small denominators $\displaystyle \left(\prod_i \bar{k}_i\right)  \norm{\langle k,\a \rangle}$ 
of comparable strength, as well as the asymptotic independence of the small denominators
from different scales,
are key in proving that the limiting distribution of the normalized discrepancy turns out to be a classical stable law (see Section \ref{sec.poissonreduction} below). A similar analysis lies behind Kesten's proof in the case $d=1$. 

In contrast, the resonant terms in the Fourier series of the discrepancy function relative to a convex set involve small denominators
of the  form $|k|^{\frac{d+1}{2}}  \norm{\langle k,\a \rangle}$. 
The latter form of the small denominators  limits the contributing terms to frequencies $k$ 
of the same scale.
Namely it is shown in \cite{dbconvex} that the main contribution to the discrepancy comes from
frequencies $|k|$ that are of the order of $N^{\frac{1}{d}}$, which leads to the normalization factor 
$N^{\frac{d-1}{2d}}$. Using Dani correspondence and mixing of expanding translates of 
horoshperical subgroups, it was shown in \cite{dbconvex} that the limiting distribution
of the normalized discrepancy function is given by a Siegel transform of a certain function
on the space of marked lattices where the marked lattice is chosen at random.
 This is a special case of the distribution obtained by taking a certain function on an appropriate
moduli space and evaluating them at a random point.
Such distributions are not well studied in probabilistic literature, even though
recently it has been shown that they appear in many problems related to distribution
of ergodic averages of renormalizable systems (\cite{BF, Mar2, GM}). 
The first steps towards creating the general theory of these distributions are made in
\cite{CM, MV} but more work is needed in this direction. 
%The contribution of these terms is then related to a function of the short vectors of a lattice $L(\a,N) \in \cM_{d+1}$ leading finally to a nonstandard limiting distribution of the normalized discrepancy function. 
\end{remark}

We will need several lemmas to prove Proposition \ref{prop.reduction}. The lemmas will involve $L^2$ estimates with respect to the variables $(\a,x) \in \T^{2d}$ as well as an exclusion 
of a 
%\margem{I added this sentence to explain that \eqref{conv.dist} is not just a 
%corollary of an $L^2$ estimate as the referee was suggesting... /b}   
small measure of frequencies $\a$ where the discrepancy may go completely out of control due to the very small denominators  $\Pi_i \bar{k}_i  \norm{\langle k,\a \rangle}$. 

 \subsection{}   Let 
$$\holako_1(\xia,N)=  \sum_{|k_i| \leq N, \; k \neq (0,\ldots,0)} U_{k}(\xia,N).$$
\begin{lemma} \label{lemma.hol1} We have 
\begin{equation}\label{hol1} {\| \DOO-\holako_1\|}_2^2 =\cO(1). \end{equation}
\end{lemma}

The $L^2$ norm in \eqref{hol1} and below in Section \ref{sec3} %refers by default 
is taken with respect to the variables $(\a,x) \in \T^{2d}$.

\begin{proof} Assume $\xia \in X$ given. 
%for $k\in \Z^d$ such that $\max |k_i|\geq N$. 
Then for any $q \geq N$ and any $q_1,\ldots,q_{d-1}\in \N$ 
let $\hY(q, q_1,\dots q_{d-1})$ be the set of vectors $k\in \Z^d$ such that for some permutation 
of the indices $1,\ldots,d$
we have
%there exists a uniformly bounded in $N$ number of
 $|{\bar k}_{i_d}| \in [q,q+1]$ and $|\bar{k}_{i_j}| \in [q_j,q_{j}+1]$ for every $j\in [1,d-1].$ 
Note that the cardinality of $\hY(q, q_1,\dots q_{d-1})$ is uniformly bounded.

 Since for any $\omega \in \T$ and any $m \neq 0$, 
 $$\left| \frac{\sin\left(2\pi m\omega\right)}{m}\right| < \min\left(2\pi |\omega|, \frac{1}{|m|}\right)=
 \cO\left(\frac{1}{|m|+1}\right),$$ 
 the contributions of high frequencies 
  can be bounded as follows. 
 $${\| \DOO-\holako_1\|}_2^2$$
\begin{align*} 
&\ll \sum_{q \geq N, q_1,\ldots,q_{d-1} \geq 0} \frac{1}{q^2(q_1+1)^2 \ldots (q_{d-1}+1)^2} 
\sum_{k\in \hY(q, q_1,\dots q_{d-1})}
\int_{\T^d} {\left(\frac{\sin(\pi N \langle k,\a \rangle}{\sin(\pi \langle k,\a \rangle)}\right)}^2 d\a \\ 
&\ll  \sum_{q \geq N, q_1,\ldots,q_{d-1} \geq 0} \frac{1}{q^2(q_1+1)^2 \ldots (q_{d-1}+1)^2} N 
%\\&
\ll 1. \qedhere
 \end{align*}
\end{proof} 

 \subsection{}  
\label{nosmallk} Define $S(\xia,N)=S(\MA,N) := \{k\in \Z^d : 
   |k_i|\leq N, |\bar{k}_i|\geq 1 \}$. Then let
$$\holako_2(\xia,N) =  \sum_{ k \in S(\xia, N)} U_{k}(\xia,N).$$
We want to  replace  $\holako_1$ by $\holako_2$. For a fixed matrix $\MA$, we want to bound the contributions of frequencies $k$ such that $|\bar{k}_{i_d}|< 1$ for at least one index $i_d \in [1,d]$. Observe first that since $\MA$ is close to Identity then $|\bar{k}_i|\leq 2N$ for every $i$. 
Moreover, there exists $C(d)$ such that for every $(q_1,\ldots,q_{d-1}) \in [0,2N]^{d-1}$ there is at most $C(d)$ vectors $k\in [-N,N]^d$ such that 
$|\bar{k}_{i_d}|\leq1$ and $|\bar{k}_{i_j}| \in [q_j,q_j+1]$ for every $j\in [1,d-1]$, where $i_j$ is some permutation of the indices $1,\ldots,d$. We call $\hY(q_1,\ldots,q_{d-1})$ the latter set of $k$. We then exclude the translation vectors $\a$ for which there exists 
$(q_1,\ldots,q_{d-1}) \in [0,2N]^{d-1}$ with at least one $k\in \hY(q_1,\ldots,q_{d-1})$ satisfying 
$\displaystyle \left| \prod_{i=1}^{d-1} (q_i+1) \right|  \norm{\langle k,\a \rangle} \leq \breps/(\ln N)^{d-1}$. The excluded set 
$E_N(\MA)$ has Lebesgue measure of order $\breps$. 
\begin{lemma} \label{lemma.hol2}
$\displaystyle
{\| \holako_2-\holako_1\|}^2_{L_2((\T^d-E_N) \times \T^d)} \ll  \frac{(\ln N)^{2(d-1)}}{\breps}. $
\end{lemma}

\begin{proof} Let 
\begin{multline*} B_p\left((q_1,\ldots, q_{d-1}),\MA\right)=  
\{ \a \in \T^d : \exists k \in 
\hY(q_1,\ldots,q_{d-1})(\MA),  \\  p \breps/(\ln N)^{d-1} \leq \left( \prod_{i=1}^{d-1} (q_i+1) \right)  \norm{\langle k,\a \rangle} \leq (p+1) \breps/(\ln N)^{d-1} \}. \end{multline*}
Then  
${\rm Leb}\left(B_p\left((q_1,\ldots, q_{d-1}),\MA\right)\right) \ll \frac{\breps}{(q_1+1)\ldots (q_{d-1}+1) (\ln N)^{d-1}}.$ 
Hence 
\begin{align*} &{\| \holako_2-\holako_1\|}_{L_2((\T^d-E_N) \times \T^d)}^2 \ll \\
&  \sum_{q_1,\ldots,q_{d-1} \in [0,2N]^{d-1}} \sum_{p\geq 1} \frac{\breps}{(q_1+1)\ldots 
(q_{d-1}+1) (\ln N)^{d-1}}\frac{(\ln N)^{2(d-1)}}{\breps^2 p^2} \\
&\ll \frac{(\ln N)^{2(d-1)}}{\breps} . 
\qedhere
\end{align*}
\end{proof}

 \subsection{} For $k\in \Z^d$, denote $K(\xia,k)= \Pi_{i=1}^d \bar{k}_i$. 
Let $$\bar{S}(\xia,N):= \{k\in \Z^d:   |K(k)|\leq N, |\bar{k}_i|\geq 1 \} \text{ and } 
\holako_3(\xia,N)  =  \sum_{ k \in \bar{S}} U_{k}(\xia,N).$$
\begin{lemma} \label{lemma.hol3}
\begin{equation}\label{hol3}{\| \holako_3- \holako_2\|}_2^2 =\cO\left(\left(\ln N\right)^{d-1}\right). \end{equation}
\end{lemma}

\begin{proof} 
%We have that 
%\begin{align*} 
$${\| \holako_3- \holako_2\|}_2^2 \leq   
\sum_{k\in S,   |K(k)|\geq N} \frac{1}{K(k)^2} \int_{\T^d} {\left(\frac{\sin(\pi N \langle k,\a \rangle)}{\sin(\pi \langle k,\a \rangle)}\right)}^2 d\a  
\leq  \sum_{k\in S,   |K(k)|\geq N} \frac{N}{K(k)^2} . $$
%\end{align*}

For $s\in \N$, let 
$A_s = \{ k \in S: |K(k)| \in [e^{s}N,e^{s+1}N] \} $
and observe that  $\Card(A_s) \ll e^s N (\ln N+s)^{d-1}$. Thus 
$$ {\| \holako_3- \holako_2\|}_2^2    \ll \sum_{s=0}^\infty  {e^s N (\ln N +s)^{d-1}}    \frac{N}{(e^sN)^2} 
\ll \ln N^{d-1} . \qedhere $$
\end{proof}

%%%%%%%%%%%%%%%%
%%%%%%%%%%%%%%%%
%%%%%%%%%%%%%%%%
%%%%%%%%%%%%%%%%

 \subsection{}  Recall the definition \eqref{defW} of $W(\xia,N)=W(\MA,\a,N)$ 
$$  W(\xia,N) = \left\{k \in \bar{S}((a_{i,j}),N) : 
|\Pi_{i=1}^d \bar{k}_i | \norm{\langle k,\a \rangle} \leq \frac{1}{\breps (\ln N)^d} \right\} 
$$
and let 
$$\holako_4(\xia,N) =  \sum_{k \in W(\xia, N)} U_{k}(\xia,N).$$

\begin{lemma} \label{lemma.hol4}
$\displaystyle
{\| \holako_4- \holako_3\|}_{L_2((\T^d-E_N) \times \T^d)} \ll \sqrt{\breps} \; (\ln N)^d. $
\end{lemma}

\begin{proof}  
%\margem{I don't see any problem with this proof. Comment 15 is not clear to me. /b}
Since $k \in \bar S$ and $\MA$ is close to Identity, we have that $1\leq |\bar{k}_i| \leq 2N$ for every $i$. 
Now, for every $q_1,\ldots,q_d \in [1,2N]^d$ there are at most $C(d)$  vectors $k\in [-N,N]^d$ such that 
 $|\bar{k}_{i}| \in [q_i,q_i+1]$. We denote the latter set of vectors $Y(q_1,\ldots,q_d)$.  We have that 

$${\| \holako_4- \holako_3\|}^2_{L_2((\T^d-E_N) \times \T^d)}  \ll     \sum_{(q_1,\ldots,q_d) \in [1,2N]^d} A_{Y(q_1,\ldots,q_d)}$$
where
$$A_{Y(q_1,\ldots,q_d)} = \sum_{k \in Y(q_1,\ldots,q_d)} \int_{\T^d}   \frac{1}{((\Pi_{i=1}^d q_i)  \norm{\langle k,\a \rangle} )^2} \one_{ \left((\Pi_{i=1}^d q_i) \; \norm{\langle k,\a \rangle}\right) \geq 1/ \breps (\ln N)^d } \;d\a.$$
Consider for each $k \in Y(q_1,\ldots,q_d) $ and $p\in \N$ the sets 
$$B_{k,p}= \left\{ \a \in \T^d  :  \frac{p}{\breps(\ln N)^d}  \leq  
\left(\prod_{i=1}^d q_i\right)   \norm{\langle k,\a \rangle}  < \frac{p+1}{\breps(\ln N)^d} \right\}.$$
We have that 
$${\rm Leb}_{\T^d}(B_{k,p})\leq \frac{1}{\breps (\Pi_{i=1}^d q_i) (\ln N)^d}.$$ 
Thus
$$ A_{Y(q_1,\ldots,q_d)} \leq   C \frac{1}{\breps  (\Pi_{i=1}^d q_i) (\ln N)^d}  \sum_{p=1}^\infty \frac{\breps^2(\ln N)^{2d}}{p^2} 
\leq  C \breps \frac{(\ln N)^d}{\Pi_{i=1}^d q_i} $$
and the claim follows as we sum over $(q_1,\ldots,q_d) \in [1,2N]^d$. \end{proof}

 \subsection{}    Since the  terms in $\holako_4$ satisfy $\norm{\langle k,\a \rangle} \leq \frac{1}{\breps (\ln N)^d}$, 
%we will see in the sequel that the number of contributing terms is given by a Poisson distribution\margem{I added this remark about the number of contributing terms},  
we can replace $U_{k}$ defined in \S \ref{res.reduction} by 
$$V_{k}(\xia,N)=\cfourier \prod_i \left(\frac{\sin\left( 2\pi \bar{k}_i u_i\right)}{\bar{k}_i}\right) 
\frac{\sin(\pi N \langle k,\a \rangle)}{\pi \norm{\langle k,\a \rangle}}\cos(2\pi \langle k,x \rangle+\varphi_{k,N,\a}))$$
and introduce 
 $$\holako_5(\xia,N) = 2\sum_{k \in W(\xia, N),k_1>0} V_k.$$
 Note that for small $\theta$ we have
 $|\sin\theta-\theta|\leq |\theta|^3,$ so
 $$ \left|\frac{1}{\sin\theta}-\frac{1}{\theta}\right|\leq 
 \left|\frac{\theta^3}{\theta\sin\theta}\right|\leq 2\theta. $$
Hence for $k \in W(\xia, N)$, 
 \begin{equation*} |U_k-V_k|\leq C \frac{\norm{\langle k,\a \rangle}}{  |\Pi_{i=1}^d \bar{k}_i|} \leq 
 \frac{  C }{  \breps (\ln N)^d \left(\left|\Pi_{i=1}^d \bar{k}_i\right|\right)^2}.
  \end{equation*}
Summing over $k\in W(\xia, N)$ we have thus obtained
\begin{lemma} \label{lemma.hol6}
\begin{equation}\label{hol6}|\holako_5(\xia,N) -\holako_4(\xia,N)|\leq \frac{C}{\breps (\ln N)^{d}}. \end{equation}
\end{lemma} 

 \subsection{}  
{\it \bg Proof of Proposition \ref{prop.reduction}.}
Putting together Lemmas \ref{lemma.hol1}--\ref{lemma.hol6}, we 
see that $(\holako_5(\xia,N)-\DOO(\xia,N)) / (\ln N)^d$ satisfies \eqref{conv.dist} if $\breps>0$ is sufficiently small and then $N$ is sufficiently large.

Recall the definition of $\bar{\holako}(\xia,N)$ given in \eqref{eq.d7}.
  The difference between $(\ln N)^d \bar{\holako}(\xia,N)$ and 
$D_5$ is that for $k \in Z(\xia, N)$,   
we comprise in $(\ln N)^d \bar{\holako}(\xia,N)$ all its multiples whereas in $D_5$ we take only multiples such that $p k \in W$. %Thus
%\begin{equation*} \bar{\holako}(\xia,N)-\holako_5(\xia,N)=2\sum_{k \in Z(\xia, N)} 
%\sum_{p \geq 1 : pk \notin W(\xia, N)}  V_{pk} \end{equation*}
Since we have already shown in Lemmas \ref{lemma.hol1}--\ref{lemma.hol6} that the frequencies which are not in $W(\xia, N)$
make a negligible contribution after normalization by $(\ln N)^d$ as $\breps \to 0$ and $N \to \infty$
it follows that for each $\upsilon$ 
$$ \lambda\left( \left\{ \xia \in X :   \left|\frac{\holako_5(\xia,N)}{(\ln N)^d}- {\bar{\holako}(\xia,N)} \right|  \geq 
\frac{\upsilon}{2} \right\} \right)\leq \frac{\upsilon}{2} $$
provided that $\breps$ is sufficiently small and $N$ is sufficiently large.
This finishes the proof of Proposition \ref{prop.reduction}. \qed
%\end{proof}

 \section{  Poisson distribution of small divisors} \label{sec.poissonreduction}

In this section we reduce the Cauchy limit of the discrepancies to a Poisson limit theorem (Theorem \ref{ThPoisResd}) for the small divisors $\displaystyle \left(\prod_i \bar{k}_i\right)  \norm{\langle k,\a \rangle}$.

 \subsection{}   The following is the bulk of our proof of Theorem \ref{dimd}.

%The Poisson limit theorem that implies the asymptotic distribution of the normalized discrepancy becomes instead of theorem \ref{ThPoisRes}

\begin{Main} \label{ThPoisResd} 
Assume that the distribution of $\xia \in X$ is absolutely continuous
with respect to the Lebesgue measure. For any $\bar{\eps}>0$, as $N \to \infty$, the process
$$\left\{(\ln N)^d  \left(\prod_i \bar{k}_i \right)  \norm{\langle k,\a \rangle}, N\langle k,\a \rangle \text{ mod } 2 , \{ \bar{k}_1 u_1\},\ldots,\{ \bar{k}_d u_d\},  \{\langle k,x \rangle\}  \right\}_{k \in Z(\xia, N)} $$
where $Z(\xia, N)$ is defined by \eqref{defZ},
converges to a Poisson process on 
$$\left[- \frac{1}{\breps}, \frac{1}{\breps}\right]\times (\R/ (2\Z)) \times \T^{d+1}$$ 
with intensity 
\begin{equation}
\label{MasterIntensity}
c=2^{d-1} \bc_1/d!, \quad \bc_1=1/\zeta(d+1). \end{equation}
\end{Main}

\begin{remark} \label{remark.smooth} It is sufficient to prove Theorem   \ref{ThPoisResd} when $\xia$ is distributed according to a smooth bounded density with respect to the normalized Lebesgue measure $\lambda$ on $X.$

Indeed,  let us suppose that Theorem \ref{ThPoisResd} is known for smooth measures
and assume now that $\xia$ is distributed according to an integrable  density $\fp(\xia).$ Let 
$\cK_1, \cK_2,\dots \cK_r$ be any partition of the target space 
$$ \left[-\frac{1}{\eps}, \frac{1}{\eps}\right]\times \R/(2\Z)\times \T^{d+1}.$$
Let $\cN_1(\xia, N), \dots \cN_r(\xia, N)$ be the number of points of our process inside
$\cK_1,\dots \cK_r$ respectively. We need to know that as $N\to\infty$,
$\cN_j(\cL, N)$ are asymptotically independent Poisson random variables with means
$\bc\hat\cK_j$ where $\hat\cK_j$ is the volume of $\cK_j.$
Equivalently we need to show that for each $r$ tuple $s_1, \dots s_r$
$$ \lim_{N\to \infty}\int \exp\left(i\sum_{j=1}^r s_j \cN_j(\xia, N)\right) \fp(\xia) d\lambda(\xia)
=\psi(s_1, \dots s_r) $$
where $\psi$ is characteristic function of multivariate Poisson, namely
$$ \psi(s_1,\dots s_r)=\exp\left(\bc \sum_{j=1}^r \hat\cK_j \left[e^{is_j}-1\right]
\right)$$
(the precise form of $\psi$ is not important for the argument below).
Fix $\upsilon>0.$ Take a smooth density $\bar\fp$ on $X$ such that 
\begin{equation}
\label{L1Close}
||\fp-\bar\fp||_{L^1(\lambda)}\leq \frac{\upsilon}{2}.
\end{equation}
Since we assume that Theorem \ref{ThPoisResd} holds for smooth densities, 
for large $N$ we have  
\begin{equation}
\label{CharClose}
 \left|\int \exp\left(i\sum_{j=1}^r s_j \cN_j(\xia, N)\right) \bar\fp(\xia) d\lambda(\xia)
-\psi(s_1, \dots s_r)\right|<\frac{\upsilon}{2}.  
\end{equation}
Combining \eqref{L1Close} with \eqref{CharClose} we obtain 
$$  \left|\int \exp\left(i\sum_{j=1}^r s_j \cN_j(\xia, N)\right) \fp(\xia) d\mu(\xia)
-\psi(s_1, \dots s_r)\right|<\upsilon. $$
Since $\upsilon$ as well as the partition $\cK_1,\dots \cK_r$ are arbitrary we obtain 
that Theorem \ref{ThPLat} holds for absolutely continuous initial distributions.

Therefore we assume henceforth  that the initial distribution 
of $\xia$ has smooth density on $X.$
\end{remark}

\begin{remark} 
\label{RkPoisResd} 
Observe that it does not change anything in the result nor in the proof to take in the last coordinate of the process $\{\langle k,x \rangle+\varphi_{k,\a,N}\}$ instead of $\{\langle k,x \rangle\}$ since the phase 
$\varphi_{k,\a,N}=\pi(N-1)\langle k,\a \rangle/2$ is independent of the variable $x$. It is with the phase $\varphi_{k,\a,N}$ that Theorem \ref{ThPoisResd} is used to prove Theorem \ref{dimd}.
\end{remark}

Here and below when we consider the Poisson process on a real line times a torus the intensity is always computed with
respect to the Lebesgue measure on the line times the Haar measure on the torus. This normalization is convenient since in Lemma \ref{LmPT}(c)
we need to have a probability measure on the second factor.

Sections \ref{sec.reduction}--\ref{SSDenom} are dedicated to the proof of Theorem \ref{ThPoisResd}.

Note that by standard properties of weak convergence the result remains valid in the limit $\breps=0.$ That is, we get the following result which is of independent interest.
\begin{coro}Let $(\MA, \a)$ have absolutely continuous distribution on \newline
$SL_{d}(\reals)\times \T^d.$ Let

{
\begin{multline}
\label{DefHZ}
 \hat Z(\xia,N) := \Big\{k\in \Z^d : \left|\prod_{i=1}^d \bar{k}_i\right|<N,   
\quad \forall i=1,\ldots,d, \quad  |\bar{k}_i|\geq 1, \\
\bar k_1>0 \text{ and } \exists m \in \Z  {\rm \ s. \ t. \ }  %\right. \\ \left.  
k_1 \wedge \ldots \wedge k_d  \wedge m =1  {\rm \ and \ }  \norm{\langle k,\a \rangle}= |\langle k,\a \rangle+m| \Big\} . \end{multline}} 
%be ditributed according to the normalized Lebesgue measure $\lambda.$
Then as %{\orange $\bar \eps \to 0$ then} 
$N\to\infty$ the point process 
$$\left\{(\ln N)^d  \left(\prod_i \bar{k}_i \right)  \norm{\langle k,\a \rangle}
%N\langle k,\a \rangle {\rm mod} (2), 
%\{ \bar{k}_1 u_1\},\ldots,\{ \bar{k}_d u_d\} 
%\right. \right.$$
%$$\left. \left.  \{\langle k,x \rangle+\varphi_{k,\a,N}\}\right)
\right\}_{k \in { \hat Z}(\xia, N)} $$
%where 
%\begin{multline*}Z^*(\xi,N) = \left\{ k \in \Z^d : |\bar{k}_i| \geq 1, |\Pi_i \bar{k}_i|<N,  \bar k_1>0, \right. \\ \left.  |\Pi_i \bar{k}_i|   \norm{\langle k,\a \rangle} \leq \frac{1}{\e (\ln N)^d},    \right. \\ \left. \exists m \in \Z  {\rm \ such \ that \ }  k_1 \wedge\ldots \wedge  k_d \wedge m =1  {\ and \ }  \norm{\langle k,\a \rangle}= \langle k,\a \rangle+m \right\} \end{multline*}
%$$  Z^*(\xia, N)= \left\{k\in \Z^d :  \left|\prod_i \bar{k}_i\right|   <N, 
%|\bar{k}_i| \geq 1, \brk_1>0 \right\}.  $$
converges to a Poisson process on $\reals$
%\times %\R / (2\Z) \times \T^{d}$ 
with intensity 
$2^{d-1} \bc_1/d!.$ 
\end{coro}

{ Note that 
$$ Z(\xia, N)=\Big \{k\in \hat{Z}(\xia, N): \left|(\ln N)^d  \left(\prod_i \bar{k}_i\right)  \norm{\langle k,\a \rangle}\right|\leq \frac{1}{\breps} \Big\} $$
(compare \eqref{DefHZ} with \eqref{defZ}, \eqref{defW}).}

\begin{proof}
By definition of the weak convergence it is sufficient to prove that for each $\breps$ the point process restricted by the 
condition
$$ \left|(\ln N)^d  \left(\prod_i \bar{k}_i\right)  \norm{\langle k,\a \rangle}\right|\leq \frac{1}{\breps} $$
converges to the Poisson process on 
$\left[-\frac{1}{\breps}, \frac{1}{\breps}\right]\times (\R / (2\Z))
\times \T^{d}.$ 
Thus the corollary follows from Theorem \ref{ThPoisResd} and the invariance of Poisson processes
under projection (Lemma \ref{LmPT}(b)).
\end{proof}

{ 
 \subsection{Proof that Theorem \ref{ThPoisResd} implies Theorem \ref{dimd}} Fix $z \in \R$ and $\eta>0$. We want to show that for $N$ sufficiently large we have 
\begin{equation} \label{cauchy0} 
\left|\lambda \{\xia \in X   :  \frac{ D(\xia,N)}{(\ln N)^d} \leq z \} 
 - \fC(\brho z)\right|<\eta. \end{equation}
We first use the approximation of $ \frac{ D(\xia,N)}{(\ln N)^d}$ by $ { \bar{\holako}(\xia,N)}$ given by  Proposition \ref{prop.reduction}. Observe that unlike $ { D(\xia,N)}$, the definition of $\bar{\holako}{(\xia,N)}$ depends implicitly on some $\bar\eps>0$. Having fixed $z\in \R$, we know from Proposition \ref{prop.reduction}, that if we fix $\breps>0$ sufficiently small, and then consider $N$ sufficiently large the following holds
{
$$
\lambda \left\{\xia \in X   :  {\bar{\holako}(\xia,N)} \leq z-\frac{\eta}{4} \right\}-\frac{\eta}{2}$$
$$<
\lambda \left\{\xia \in X   :  \frac{ D(\xia,N)}{(\ln N)^d} \leq z \right\}< $$
$$ \lambda \left\{\xia \in X   :  {\bar{\holako}(\xia,N)} \leq z+\frac{\eta}{4} \right\}+\frac{\eta}{2}. $$}
Hence it suffices to prove that for $\breps$ sufficiently small
and  $N$ sufficiently large 
%{\orange }\footnote{\orange better say : as $\breps$ sufficiently small and then $N$ sufficiently large?}
we have 
\begin{equation} \label{cauchy22} \left|\lambda \{\xia \in X   :  
{\bar{\holako}(\xia,N)} \leq z \} 
 - \fC(\brho z)\right|<\frac{\eta}{2} .
 \end{equation}

%From Proposition \ref{prop.reduction} it suffices to show that as $\breps \to 0$ and $N \to \infty$,  $\frac{\bar{\holako}}{\brho (\ln N)^d}$ converges in distribution to the standard Cauchy law.  

%\margem{Should we put even more details in this proof? /b} 

We now want to use the  representation of the Cauchy distribution in terms of a Poisson process recalled in \S \ref{SSPP}, and the Poisson limit of Theorem  \ref{ThPoisResd}. Recall that $\bar{\holako}(\xia,N) = \sum_{k \in Z} {\Gamma_{k}(\xia,N)}/{\Omega_{k}(\xia,N)}$ 
where ${\Omega_{k}(\xia,N)}$ and  ${\Gamma_{k}(\xia,N)}$ are given by \eqref{DefTheta} and \eqref{GammaPhi} respectively. 

First of all, Theorem \ref{ThPoisResd} asserts that the point process $\{\Omega_{k}(\xia,N)\}_{k\in Z(\xia,N)}$, that converges to  a Poisson process on $[- \frac{1}{\breps}, \frac{1}{\breps}]$  
with constant intensity 
$c=2^{d-1} \bc_1/d!$. Secondly, Theorem \ref{ThPoisResd}, with %the straightforward 
Remark \ref{RkPoisResd}, and Lemma 
{\color{blue} \ref{LmPT} (c)}
  tell us that $\{\Gamma_{k}(\xia,N)\}_{k\in Z(\xia,N)}$ behave asymptotically, as $N \to \infty$, like iid symmetric variables with compact support, that are independent of  
  $\{\Omega_{k}(\xia,N)\}_{k\in Z(\xia,N)}$. 
 
 Hence, the limiting distribution, as $N\to \infty$, of    $ \sum_{k \in Z} {\Gamma_{k}(\xia,N)}/{\Omega_{k}(\xia,N)}$ is approached by that of 
 $ \sum \Theta_k/\overline{\Gamma}_k$ where  $\{\Theta_k\}$ is a Poisson process on $[- \frac{1}{\breps}, \frac{1}{\breps}]$ with constant intensity $c$ and  $\{\overline{\Gamma}_k\}$ consisting of symmetric variables with compact support, that are independent of  $\{\Theta_k\}$. 

Hence, if $\breps$ was chosen sufficiently small, then $N$ sufficiently large, it follows from  Lemma \ref{LmPoissonCauchy} (b) that \eqref{cauchy22} holds with $ \brho=\pi c \E( |\overline{\Gamma}_k|),$ where the expectation is taken with respect to the distribution of the iid variables $\{\overline{\Gamma}_k\}$.  

We finish by explicitly computing this limit expectation. 
Using the definition of $\Gamma_k$ in \eqref{GammaPhi}
we get
\begin{equation}
\label{ExpGamma}
\E(|\overline{\Gamma}_k|)=\frac{2}{\pi^{d+1}} \int\dots\int 
\left| \phi(\eta_1, \dots \eta_{d+1})\right| d\eta_1\dots 
d \eta_{d+2}
\end{equation}
where $\phi$ is given by
\eqref{DefGamma}.

The formula  \eqref{DefBRho} for $\brho$ follows from \eqref{C1C2},  \eqref{MasterIntensity}, and
\eqref{ExpGamma}.
 %= \frac{1}{d!} \left(\frac{2}{\pi}\right)^{2d+2}.$
   Theorem \ref{dimd} is thus proved.  \carre }

 \section{  Reduction to dynamics on the space of lattices} \label{sec.reduction} 
 \subsection{  Notation.}
The goal of this section is to reduce the proof of Theorem \ref{ThPoisResd} to Theorem  \ref{ThPLat} 
which is a Poisson limit theorem for the diagonal action on  
 the space of lattices. {For this we rely on Dani's correspondance principle relating  Diophantine approximation problems to visits to a cusp  by orbits of the Cartan action on   the space of $d+1$ dimensional 
unimodular lattices $\cM_{d+1}=SL_{d+1}(\reals)/SL_{d+1}(\integers)$.    For more details on the specific approach adopted here we refer the reader to the surveys  \cite{marklof.symp} and  \cite{DFsurvey} and references therein.
 }

For simplicity  we drop the subindex and refer to $\cM_{d+1}$ as $\cM$. We recall that $\mu$ denotes the Haar measure on $\cM$. 

In all the sequel we associate to $N$ the integer  $M=[\ln N].$

Define 
\begin{align} %\label{DefbPi} \bPi&=\{\bt\in \R^d: \bt_j>0, \; \sum_{j=1}^d \bt_j<1\} \\ 
\label{DefbPi2} 
\Pi_M &=\{\bt\in\N^{d}: \sum_{j=1}^d \bt_j\leq M{\bpurple -d}\}. \end{align}

%Consider linear forms 
%$$L_1=a_1 k+b_1 l, \quad L_2=a_2 k+b_2 l, \quad L_3=\alpha k+\beta l. $$
%Suppose we want to count solutions to
%$$ L_1 L_2 \{L_3\}\leq \frac{1}{\eps M^2} $$
%with 
%\begin{equation}
%\label{SizeL}
%e^t_{1} <L_1\leq e^{t_1+1}, \quad
%e^{t_2} <|L_2|\leq e^{t_2+1}.
%\end{equation}

%\margem{$K$ has several meanings}
Given $\breps>0$ let
$$I=(1,e], \quad J=[-e,-1)\cup (1, e], \quad K=\left[-\frac{1}{\breps}, \frac{1}{\breps}\right].$$

For $\xia=(\a, x, \MA)\in X$, we define 
\begin{equation}\label{def.lambda} 
\Lambda(\xia)=\left(\begin{array}{rrr}
\MA  &       0 \cr
\alpha & 1 
\end{array}  \right).
\end{equation}

We recall that in all the sequel, it is assumed that the distribution of $\xia \in X$ has smooth density 
with respect to the Lebesgue measure.

Consider the Cartan subgroup
\begin{equation}
\label{Cartan}
g^{\bt}=\textrm{diag}\left(e^{-\bt_1}, \dots e^{-\bt_d}, e^{\sum_{j=1}^d \bt_j}\right), \quad
\bt\in \R^d.
%\bt=(
%g_{t_1, t_2}=\left(\begin{array}{rrr}
%e^{-t_1} & 0 & 0 \cr
%0 & e^{-t_2} & 0 \cr
%0 & 0 & e^{t_1+t_2} 
%\end{array}\right), \quad (t_1, t_2)\in \R^2. 
\end{equation}

%Then our conditions amount to 
%$$ x\in I, \quad y\in J, \quad xyz\in K. $$

 \subsection{  Reduction to a Poisson Limit Theorem for the Cartan action.}
{Let $\cL$ be a unimodular lattice in $\R^{d+1}.$ We decompose elements of $\cL$ as
$$ \vv=(x(\vv),  z(\vv)) \text{ where } x\in \R^{d}, \; z\in \R. $$
Let $\displaystyle \Pi(\vv)=\left(\prod_{j=1}^d x_j\right) z.$
Define 
\begin{equation}
\label{DefKeySet}
\cD=\{(x, z): x_1\in I, \; x_j\in J\text{ for }j=2,\dots, d \text{ and } M^d \Pi((x,z))\in K \}.
\end{equation}
Let $\Phi: \cM\to\reals$ be the Siegel transform of $\one_\cD$, 
as defined in \S \ref{SSRI}, 
that is 
\begin{equation}
\label{DefPhi}
 \Phi(\cL)=\sum_{\vv\in \cL\text{ prime}} \one_\cD(\vv)
\end{equation} 

%Note that for such $\xia$ we have that $\Phi(g_{t_1,t_2} \Lambda(\xia))\leq1$ for any $t \in \Pi$. Also 
The bulk of the derivation of the distribution of the small divisors of  Theorem \ref{ThPoisResd} from a Poisson limit theorem for the diagonal action on  
 the space of lattices is encapsulated in the following simple observation. Recall the definition of $Z(\xia,N)$ from \eqref{defZ}.

\medskip 

\noindent{\bg \sc Claim.  }  {\it  For $\xia$ as in \eqref{phileq1}, the following are equivalent for $\bt \in \Pi_M$: \begin{itemize}
 \item[(i)]  $\Phi(g^{\bt} \Lambda(\xia))=1$
 \item[(ii)]   There exists a unique $(k,m)   \in Z(\xia,N)$ such that 
$\displaystyle e^{\bt_{j}} <{ \left|\brk_j \right|}\leq e^{\bt_j+1}$ for $j=1, \dots,  d.$
\end{itemize}} 

\begin{proof} From the definition of $\Phi$ we have that 
\begin{align*} \Phi(g^{\bt} \Lambda(\xia))&=  \#\{(k,m) \in \Z^d\times \Z, k_1 \wedge \ldots \wedge k_d  \wedge m =1 :  g^{\bt} \Lambda(\xia) (k,m) \in \cD \}.\end{align*} 
Recall the notation \eqref{sidedual}: $\brk_i=a_{i,1} k_1+\dots+a_{i,d}k_d$. Then observe that the definition \eqref{def.lambda}  of $ \Lambda(\xia)$ implies that 
$$g^{\bt} \Lambda(\xia) (k,m)=\left(e^{-\bt_1}\bar{k}_1,\ldots,e^{-\bt_d}\bar{k}_d,e^{\sum_{j=1}^d \bt_j} (\langle k, \a \rangle+m)\right).$$
Hence, from the definitions of $\cD$ in \eqref{DefKeySet}, of $\Pi_M$ in \eqref{DefbPi2} and the definition of $Z(\xia,N)$ in \eqref{defZ}, we conclude that 
\begin{align*} \Phi(g^{\bt} \Lambda(\xia))&=  \#\{(k,m) \in Z(\xia,N): e^{\bt_{j}} <{ \left|\brk_j \right|}\leq e^{\bt_j+1}, j=1, \dots,  d\}.\end{align*} 
 We have thus proved the equivalence between {\bg (i)} and {\bg (ii)} of the claim. \bg 
\end{proof} 
\black }

Define also an $\reals\times (\R/2\Z)$ valued function on $\cM\times \reals$ 
\begin{equation}  \label{DefPsi} \Psi(\cL,b)=(\Psi_1(\cL), \Psi_2(\cL,b))= 
\sum_{\vv\in \cL\text{ prime}} \one_\cD(\vv)
%\mathbbm{1}_I(x_1(\vv)) 
%\left( \prod_{j=2}^d \mathbbm{1}_J(x_j(\vv)) \right) \mathbbm{1}_K(M^d \Pi(\vv) )
%\mathbbm{1}_I(x(\vv)) \mathbbm{1}_J(y(\vv)) \mathbbm{1}_K(M^2xyz(\vv)) 
(M^d \Pi (\vv), b z(\vv)  \text{ mod } 2). \end{equation}

Given $\bar \eps$ and $N$, suppose that $\xia \in X$ is such 
\begin{equation} \label{phileq1}  \forall \bt \in \Pi_M, \quad   
 \Phi(g^{\bt} \Lambda(\xia))\leq1 \end{equation}

Note that if (i) or (ii) of the Claim holds then 
$$\left((\ln N)^d  \left(\prod_i \bar{k}_i\right)   \norm{\langle k,\a \rangle}, N\langle k,\a \rangle \text{ mod } 2\right)
     = \Psi\left(g^{\bt} \Lambda(\xia), N e^{-\sum_{j=1}^d \bt_j}\right). $$

Thus, for $\xia$ satisfying \eqref{phileq1}, we have that the sequence 
$$ \left\{ \left((\ln N)^d  \left(\prod_i \bar{k}_i  \right) \norm{\langle k,\a \rangle}, N\langle k,\a \rangle \text{ mod } 2\right)
\right\}_{k\in Z(\xia,N)}$$
is exactly 
$$\left\{\Psi\left(g^{\bt} \Lambda(\xia), N e^{-\sum_{j=1}^d \bt_j}\right) 
\right\}_{\bt \in \Pi_M, \Phi(g^{\bt} \Lambda(\xia))=1}.$$ 

Hence, to show that the distribution of 
$$ \left\{ \left((\ln N)^d \left( \prod_i \bar{k}_i \right) \norm{\langle k,\a \rangle}, N\langle k,\a \rangle \text{ mod } 2\right)
\right\}_{k\in Z(\xia,N)}$$
converges as $N\to \infty$  to that of a Poisson process on 
$ [-\frac{1}{\breps},\frac{1}{\breps}] \times \R/(2\Z)$ 
with intensity  $2 \bc_1$ it is sufficient to prove: 

(a) that the set of $\xia$ that do not satisfy \eqref{phileq1} is small; 

(b) that the process 
$\left\{\Psi\left(g^{\bt} \Lambda(\xia), N e^{-\sum_{j=1}^d \bt_j}\right) 
\right\}_{\bt \in \Pi_M, \Phi(g^{\bt} \Lambda(\xia))=1}$
converges in probability to a Poisson distribution.  

This is the content of the following Theorem \ref{ThPLatLambda}.  
 
\begin{theo}
\label{ThPLatLambda}  Assume that the distribution of $\xia \in X$ has smooth density 
with respect to the Lebesgue measure.
 Let $\Lambda$ be the matrix $\Lambda(\xia)$ as defined in (\ref{def.lambda}).
Then, for any $\bar{\eps}>0$, we have 

(a) For any $\bt \in \bPi$, $\displaystyle{  \lambda(\Phi(g^\bt\Lambda)>1)=\cO(M^{-2d}).}$

(b)  $\left\{\left(\Psi\left(g^{\bt} \Lambda(\xia), N e^{-\sum_{j=1}^d \bt_j}\right), \frac{\bt}{M}\right) 
\right\}_{\bt \in \Pi_M, \Phi(g^{\bt} \Lambda(\xia))=1}$
converges as $N\to\infty$ to the Poisson process on $[-\frac{1}{\breps},\frac{1}{\breps}]  \times \R/(2\Z) \times \cP$ with
intensity $ \bc_1.$

%(c) Let $\tau(t)=\max(t_1, t_2)$ and let $\tau_1<\tau_2<\ldots<\tau_s \dots $ be the set of points $\{\tau(t): \Phi(g_t \Lambda)=1, t \in \bPi \}$ listed in the increasing order. Then for each $s$ $$ \Prob(\tau_j-\tau_{j-1}>\sqrt{M} \text{ for each } j\leq s)\to 1 \text{ as }N\to\infty. $$

%(d) Let $\tau'(t)=\min(t_1, t_2)$ and let $\tau'_1>\tau'_2>\ldots>\tau'_s \dots $ be the set of points $\{\tau'(t): \Phi(g_t \Lambda)=1, t \in \bPi \}$ listed in the decreasing order. Then for each $s$ $$ \Prob(\tau'_s>\sqrt{M} )\to 1 \text{ as }N\to\infty. $$

\end{theo}

The notation $X=\cO(M^{-2d})$ means that $|X|\leq CM^{-2d}$ where 
$C$ may depend on other variables (such as $\bar \eps$) but not on $M$. 
 
In order to get the full Poisson limit in Theorem \ref{ThPoisResd} we will also need an additional effort to prove the independence and uniform distribution of the rest of the variables namely of
$$\left\{\{\brk_1 u_1\}, \dots , \{\brk_d u_d\}, \{\langle k,x \rangle\}\right\}_{k\in Z(\xia, N)}.$$
This issue is addressed below.

\begin{define} Let $L>0$. 
Consider a sequence $\{t^{(1)}, \ldots,t^{(s)}\}$ of points in $\Pi_M$ where 
$t^{(j)}=(t_1^{(j)}, \dots t_d^{(j)}).$
We say that this sequence 
is {\em $L$-split} if for any pair $i,j$ we have 
$$ |t_p^{(i)}-t_p^{(j)}|\geq L   \text{ for each } p \text{ and } 
|\max_p (t_p^{(i)})-\max_p(t_p^{(j)}) |\geq L , $$
%and any $\tau, \tau'$ that are coordinates in $t^{(i)}$ and $t^{(j)}$, 
%we have that $|\tau-\tau'|> A$ and $||t^{(i)}|-|t^{(j)}||>A$. 
and for any $i$ we have $\displaystyle \min_p(t_p^{(i)})>L.$
\end{define}

\begin{prop}
\label{ThLacun} 
Let  $R\in \R$ and $s\in \N$ be fixed.
Let $k^{(1)}(N) \dots,  k^{(s)}(N),$ $k^{(j)}(N)\in \R^d$,  be such that 
$$t^{(j)}=([\ln|\brk^{(j)_1}|],\dots [\ln |\brk^{(j)}_d|])$$ 
is $\sqrt{M}$-split ($[\cdot]$ denotes the integer part).

%\begin{equation}
%\label{EqSep}|(k_j, l_j)|>|(k_{j-1}, l_{j-1})| e^{\sqrt{\ln N}}.
%\end{equation}
Suppose that $(u_1, \dots u_d, x_1\dots x_d)$ are distributed according
to a density $\rho_N$ such that 
\begin{equation}
\label{SmoothPhase}
||\rho_N||_{C^1}\leq R.
\end{equation}
Then the distribution of the numbers
$$ \{\brk^{(1)}_1 u_1\}, \dots, \{\brk^{(1)}_d u_d \},  \{\langle k^{(1)},x \rangle\}\dots, $$
$$ \{\brk^{(s)}_1 u_1\}, \dots, \{\brk^{(s)}_d u_d \},  \{\langle k^{(s)},x \rangle\}$$
converges to the uniform distribution on $\T^{(d+1)s}$ and the convergence is uniform with
respect to $N,$  $(\MA, \a),$  the choices of $s$ vectors satisfying  the splitness condition, 
and $\rho_N$ satisfying \eqref{SmoothPhase}.
\end{prop}

 \subsection{  Proof of Proposition \ref{ThLacun}.}
By Weyl equidistribution criterion 
we need to show that 
if $f_j: \T^{d+1}\to \mathbb{C}$ are exponentials
$$f_j(\theta_1, \dots, \theta_{d+1})=\exp\left(2\pi i \sum_{p=1}^{d+1} m_{jp} \theta_p\right) $$
and not all $m_{jk}$ are equal to zero then
%that given smooth functions $f_1, f_2\dots f_{s'}$, $s'\leq s$ on $\T^3$ with zero average we have
\begin{equation}  
\label{SplitExp}
\int_{\T^{2d}} \prod_{j=1}^{s} 
f_j\left(\brk^{(j)}_1 u_1, \dots \brk^{(j)}_d u_d, \langle k^{(j)},x \rangle \right)  \rho_N(u, x) du dx \to 0 . 
\end{equation}
uniformly in the parameters involved. 
Suppose there exists some $p\leq d$ such that
 not all $m_{jp}$ are equal to zero. 
Then the coefficient in front of $u_p$ in the above product 
is large since, due to splitness, it is dominated by the contribution of the largest of 
$\brk^{(j)}_p$ for which $m_{jp}$ is non zero. In this case we 
show that the integral \eqref{SplitExp} is small by integrating by parts with respect to $u_p.$
Next suppose that all $m_{jp}$ with $j\leq s$ and $p\leq d$ are zero.
Let $\brj$ be such that
$\max_p (t_p^{(\brj)})=:t_{\bar{p}}^{(\brj)}$ 
is the largest among those indices for which $m_{j(d+1)}\neq 0.$ 
Note that 
$k_{\bar p}^{(\brj)}$ is of order $\exp(t_{\bar p}^{(\brj)}).$ 
Then,
 due to the splitness condition,  $\brk^{(\brj)}_{\bar p}$ 
dominates the coefficient in front of $x_{\bar p}$ and so we conclude that  
\eqref{SplitExp} is small by integrating by parts with respect to $x_{\bar p}.$
\carre
%By definition of splitness, we have that $1\ll (a_1 k_1+b_1 l_1)\ll (a_1 k_2+b_1 l_2)\ll \ldots$ as well as $1 \ll (a_2 k_1+b_2 l_1) \ll (a_2 k_2+b_2 l_2)\ll \ldots$. Moreover, since $\mat$ is close to Identity, the fact that the norms of the $t^{(i)}$ are separated by $\sqrt{M}$, we get that the numbers $\max(|k_i|,|l_i|)$ are also separated by $\cO(\sqrt{M})$. The convergence to zero of the integral now follows easily by considering the Fourier series of $f_j.$  

 \subsection{  Proof that Theorem \ref{ThPLatLambda} implies Theorem \ref{ThPoisResd}}
\hskip7cm

As demonstrated earlier, 
parts (a) and (b) of   Theorem  \ref{ThPLatLambda}  imply that 
$$ \left\{ \left((\ln N)^d  \left(\prod_i \bar{k}_i  \right) \norm{\langle k,\a \rangle}, N\langle k,\a \rangle \text{ mod } 2\right)
\right\}_{k\in Z(\xia,N)}$$
converges as $N\to \infty$  to a Poisson process on 
$ [-\frac{1}{\breps},\frac{1}{\breps}] \times   \R/(2\Z)$ with intensity  
$ \bc_1$. Next,  it follows from part (b) of Theorem \ref{ThPLatLambda}
that if $t^{(1)}, t^{(2)},\ldots \in \Pi$ are the points such that $\Phi(g_t \Lambda)=1$, listed in any order; then for any $s \in \N$, we have that  %\footnote{\orange better replace $\Prob$ by $\lambda$?}
\begin{equation}  \lambda(\{t^{(1)},t^{(2)}, \ldots \}  \text{ is }   \sqrt{M}-\text{split})  \to 1   \text{ as }N\to\infty. \label{partc}  \end{equation}
Indeed, given $\breps,$
$\tilde\eps$ we can 
choose $\delta$ such that the probability that
the Poisson process on 
$[-\frac{1}{\breps}, \frac{1}{\breps}]\times \R/(2\Z) \times \cP$ with intensity $\bc_1$ has two points within distance
$\delta$ from each other in projection on the last coordinate is less than $\tilde\eps.$  
Since $M^{-1/2}<\delta$ for large $M$ \eqref{partc} follows.
Therefore outside a set of small measure of $\xia \in X$,  the set $Z(\xia, N)$
satisfies the hypothesis of Proposition \ref{ThLacun}. 

Thus  $\left\{\left(\{\brk_1 u_1\}, \dots \{\brk_d u_d\}, \{\langle k,x \rangle\}\right)\right\}_{k\in Z(\xia, N)}$ 
converge to uniformly distributed iid's on $\T^{d+1}$ independent of 
$$ \left\{ \left((\ln N)^d  \left(\prod_i \bar{k}_i\right)  \norm{\langle k,\a \rangle}, N\langle k,\a \rangle \text{ mod } 2\right)
\right\}_{k\in Z(\xia,N)}$$
Lemma \ref{LmPT} hence yields the full Poisson limit of Theorem \ref{ThPoisResd}. \carre

 \subsection{  Modifying the initial distribution.}

Before we close this section we make
 a last observation that allows us to complete the reduction of our problem to a clear cut ergodic theory problem on the space of lattices, namely the following.
\begin{theo}
\label{ThPLat}
Assume that 
$\cL \in \cM$ is distributed according to a probability measure $\tmu$ that has a smooth bounded density
with respect to the Haar measure on $\cM$. Then 

(a) For any $\bt \in \bPi$, $\displaystyle{  \tmu(\Phi(g^\bt\cL)>1)=\cO(M^{-2d}).}$

(b)  $\left\{\left( \Psi\left(g^\bt \cL, N e^{-\sum_{j=1}^d \bt_j} \right),\frac{\bt}{M} \right)\right\}_{\Phi(g^\bt \cL)=1,\; \bt \in \Pi_M}$ 
converges in probability, as $N\to\infty$, to a Poisson process  on $ [-\frac{1}{\breps},\frac{1}{\breps}] \times   \R/(2\Z) \times \cP$ with
intensity $\bc=2^{d-1} \bc_1/d!.$ 

%(c) Let $\tau(t)=\max(t_1, t_2)$ and let $\tau_1<\tau_2<\dots<\tau_s\dots$ be the set of points $\{\tau(t): \Phi(g_t \cL)=1, t \in \bPi\}$ listed in the increasing order. Then for each $s$$$ \mu(\tau_j-\tau_{j-1}>\sqrt{M} \text{ for each } j\leq s)\to 1 \text{ as }N\to\infty. $$
%(d) Let $\tau'(t)=\min(t_1, t_2)$ and let $\tau'_1>\tau'_2>\dots>\tau'_s \dots $ be the set of points $\{\tau'(t): \Phi(g_t \cL)=1, t \in \bPi \}$ listed in the decreasing order. Then for each $s$ $$ \mu(\tau'_s>\sqrt{M} )\to 1 \text{ as }N\to\infty. $$
\end{theo}

\begin{remark} As in Remark \ref{remark.smooth}, it is easy to see that the statement with a smooth bounded density of Theorem \ref{ThPLat}, implies the same results with a merely bounded density. Moreover,
part 9b) of Theorem \ref{ThPLat} holds assuming that the initial distribution of $\cL$ is absolutely continuous
with respect to the Haar measure.
\end{remark}

\noindent {\it Proof that Theorem  \ref{ThPLat} implies Theorem  \ref{ThPLatLambda}.}

Let $\eta>0$ and define for an interval $A=[a,b]$ the intervals 
$$A^+=[a(1-\eta),b(1+\eta)]\quad\text{and}\quad A^-=[a(1+\eta),b(1-\eta)].$$ Fix an interval $\bar K \subset K$.
%Let $\bar{\Phi}$, $\bar{\Psi}$,  $\bar{\Phi}^\pm$ and $\bar{\Psi}^\pm$ be defined as in (\ref{DefPhi}) and (\ref{DefPsi}) with the intervals $I^\pm,J^\pm,\bar{K}^\pm$ instead of $I,J,\bar{K}$. Next, given $\Lambda=\Lambda(\xi)$ for some $\xi \in X$, define 
Let 
$\bar{\Phi}^\pm$ be defined as in (\ref{DefPhi})  with the intervals $I^\pm,J^\pm,\bar{K}^\pm$ instead of $I,J,{K}$. 
Next, given $\Lambda=\Lambda(\xia)$ for some $\xia \in X$, define 
$$\tLambda=\left(\begin{array}{cccc}
(1+\sigma_1) & \dots & 0 & 0 \cr
\dots & \dots & \dots & \dots \cr
0 &\dots & (1+\sigma_d) & 0 \cr
0 & \dots & 0 & \left(\prod_{j=1}^d (1+\sigma_j)\right)^{-1} 
\end{array}  \right) 
\left(\begin{array}{cccc}
1 & \dots &0 & c_1 \cr
\dots & \dots & \dots & \dots \cr
0 & \dots &  1 & c_d \cr
0 & \dots & 0 & 1 
\end{array}  \right) 
\Lambda $$
where $\sigma_1,\dots, \sigma_d,$ $c_1, \dots, c_2$ are distributed according to any smooth density on $[-\eta^2,\eta^2]^{2d}$. This guarantees that when $\xia$ is distributed according to a smooth density on $X$, the lattice $\tLambda$ has a smooth bounded distribution with respect to the Haar measure.
Thus, the implication of Theorem \ref{ThPLatLambda} from  Theorem \ref{ThPLat} 
stems from the straightforward observation that  if $M$ is sufficiently large, 
then for any $n \in \N$ it holds that 
\begin{multline*} \bar{\Phi}^-(g^{(\bt_1+\ln(1+\sigma_1),\dots ,\bt_d+\ln(1+\sigma_d))} \tLambda) 
\geq n \implies \Phi(g^\bt \Lambda)  \geq n  \\ \implies \bar{\Phi}^+(g^{(\bt_1+\ln(1+\sigma_1),\dots, \bt_d\ln(1+\sigma_d))} \tLambda) \geq n. \;\text{ \carre} \end{multline*}

The proof of Theorem \ref{ThPLat} occupies Sections \ref{sec.poisson.abstract}--\ref{SSDenom}.

 \section{  Poisson limit theorem for almost independent rare events} \label{sec.poisson.abstract}

To prove Theorem \ref{ThPLat} we will start with an abstract result that establishes a Poisson limit theorem for $M^d$ variables that behave similarly to iid variables with expectation of order $1/M^d$.  
The variables to which the abstract Poisson limit theorem must be applied to imply Theorem \ref{ThPLat} will be defined precisely in Section \ref{SSDenom} (see  \eqref{xi}--\eqref{zeta}). Essentially, we will be considering a  counting variable $\xi_\bt$ that takes integer values and that corresponds to $\Phi(g^\bt \cL)$ and two related variables $\nu_\bt$ and $\zeta_\bt$ 
that give the value of $\Psi_1$ and $\Psi_2$ in 
$\Psi\left(g^\bt \cL, N e^{-\sum_j \bt_j} \right)$ when $\xi_\bt=1$.

 \subsection{  Setting and results} \label{secp}
$ \ $ 
\medskip 

{\bg $\bullet$} Let $(\Omega, \Prob)$ be a probability space.
We denote by $\E$ the expectation with respect to $\Prob.$

{\bg $\bullet$} Let $\bPi$ be a bounded domain in $\R^d$ with piecewise smooth boundary.
%(in order to establish Theorem \ref{ThPLat} it is sufficient to consider the case when $\bPi$ is the simplex given by \eqref{DefbPi}).
%polygon inside the positive quadrant of $\R^2$, for example $\bPi=[0,1]^2$ or $\bPi=\{(x,y) \in  [0,1]^2, x+y<1\}$. We denote by $\hP$ the area of $\bPi$.
For $M \in \N$ we let $$\Pi_M= \{ \bt \in \N^d : \bt/M \in \bPi\}.$$ 

{\bg $\bullet$} We are given an inhomogeneous non-constant linear form on $\R^d$, 
$$\lambda_1(\bt)=\sigma_0+\sum_{j=1}^d \sigma_j \bt_j. $$

{\bg $\bullet$} We let $(\fX,\fm)$ and  $(\tfX,\tfm)$ 
be two probability spaces. 
Let $\cQ$ be a countable collection 
of finite partitions of  $\fX$
and $\tcQ$ be a countable collection 
of finite partitions of  $\tfX.$
We assume that $\cQ$ and $\tcQ$ converge to the point partitions of  $(\fX,\fm)$ and  $(\tfX,\tfm)$ respectively.

{\bg $\bullet$}  For every $M$ we consider a sequence $\{\xi_\bt^M\}_{\bt \in \Pi_M}$ of  
random variables taking values in non-negative integers 
and a sequence $\{\nu_\bt^M\}_{\bt\in \Pi_M}$ of $\fX$ valued random variables
on $\Omega.$
 
{\bg $\bullet$} For each fixed partition
 $Q=(\Ka_1,\ldots,\Ka_P)\in \cQ$  we suppose that $\xi_\bt^M$ can be decomposed as
 $\xi_\bt^M=\sum_{p=1}^P \xi_{\bt, p}^M$ where 
 $\xi_{\bt,p}^M$ take values in non-negative integers and
on the set $\{\xi_\bt^M=1\}$, it holds that $\xi_{\bt,p}^M=\mathbbm{1}_{\nu_\bt^M\in \Ka_p}.$ 

%Define
%$$ \xi_{t,p}=\begin{cases}
%1 & \text{if } \xi_t=1 \text{ and } \nu_t^M\in K_p\\
%0 & \text{otherwise}.
%\end{cases}  $$ 
We define 
$$\eta^M_\bt=\xi_\bt^M \mathbbm{1}_{\xi^M_\bt=1}, \quad \eta^M_{\bt,p}=
\xi^M_{\bt,p} \mathbbm{1}_{\xi^M_{\bt,p}=\xi^M_\bt=1}. $$
(Note that, in fact, $\eta^M_\bt=\mathbbm{1}_{\xi^M_\bt=1},$  and $\eta^M_{\bt,p}=\mathbbm{1}_{\xi^M_{\bt,p}=\xi^M_\bt=1}$
but we use a more complicated definition above because condition (h2)  below will ensure 
that with probabiilty close to 1 we have
$\eta^M_\bt=\xi^M_\bt,$ and $\eta^M_{\bt,p}=\xi^M_{\bt,p}.$)

{\bg $\bullet$} Since all the variables depend on $M$, we will omit sometimes the superscript or subscript $M$ and denote  $\Pi_M$ simply by $\Pi$, $\xi_\bt^M$ by $\xi_\bt$ etc. 

{\bg $\bullet$} In all this section, when we use the notation $Y=\cO(X)$ or equivalently $Y \ll X$, it corresponds to $|Y|\leq C |X|$, where the implicit constant $C$ is allowed
to depend on $Q, \tilde{Q}$ but not on $M$, $\bt$, and $\bdelta$ that will be introduced later in the section. 
%if $|Y|\leq CX$  for such a constant $C$. 

{\bg $\bullet$} We assume that for every fixed $M$,  a sequence  of partitions $F_\bt$, $\bt \in \Pi$ of $(\Omega,\Prob)$ is given. 
For $\omega\in \Omega$ we denote by $F_\bt(\omega)$ the element of $F_\bt$ containing 
$\omega.$ We will denote by $\cF_\bt$ the $\sigma$-algebra generated by $F_\bt.$ 
%For a measurable set $E \in \Omega$, we adopt the notation $\cF_t \cap E$ for the partial partition formed by 
%elements  $\cF_t(\omega)$ with $\omega \in E$.
We assume that the following hypotheses hold: there exists $R>0$ (that does not depend on $M$) and a set $E$ such that $$\Prob(E^c) = \cO(M^{-100d})$$ and

\begin{itemize}
\item[(h1)]   For any $\bt \in \Pi$, 
$$\displaystyle{\E( \xi_\bt) = \cO(M^{-d})};$$
\item[(h2)]  For any $\bt \in \Pi$, 
$$ \displaystyle{\Prob(\xi_\bt>1) =\cO(M^{-2d}) };$$ 
\item[(h3)]   For $\bt,\bt' \in \Pi$, $\bt \neq \bt'$,  
$$\Prob( \xi_\bt \geq 1, \xi_{\bt'} \geq 1) =\cO(M^{-2d});$$
\item[(h4)]   For     $ \bt, \bt' \in \Pi$ with  
$\lambda_1(\bt)\geq \lambda_1(\bt')+R\ln M$, for any $p \in [1,P]$
and for any $\omega\in E;$ 
$$\displaystyle{\text{(h4a)} \quad {\E(\xi_{\bt}|\cF_{\bt'})(\omega)} =\frac{\bbc \fm(\fX)}{M^d} +\cO\left(M^{-2d}\right)},$$
%\margem{Do we need all (h4a)--(h4c) in the proof?  (h4a) is a direct consequence of (h4b)}
$$\displaystyle{\text{(h4b)}\quad \E(\xi_{\bt ,p}|\cF_{\bt'})(\omega) 
=\frac{\bbc \fm(\Ka_{p})}{M^d} +\cO\left(M^{-2d}\right)},$$
$$\displaystyle{\text{(h4c)}\quad \E(\eta_{\bt ,p}|\cF_{\bt'})(\omega) =
\frac{\bbc \fm(\Ka_{p})}{M^d} +\cO\left(M^{-2d}\right)};$$

%\item[(h4)]  $\displaystyle{ \E(\eta_{t} \eta_{\brt}|\cF_{t'} \cap E)=\cO(M^{-4}) }$ \label{re}
\item[(h5)]   For $ \bt,\bar\bt \in \Pi$ with  
$\lambda_1(\bar\bt)>\lambda_1(\bt)+R\ln M$,  for any $p \in [1,P]$, for
any $\omega\in E$
$$\xi_{\bt,p} \text{ is  constant on } F_{\bar\bt}(\omega) ; $$

\item[(h6)]
The algebras $\{\cF_\bt\}$ have a filtration like property in the sense that 
for $ \bt,\bar\bt \in \Pi$ with  $\lambda_1(\bar\bt)>\lambda_1(\bt)+R\ln M$,  for
any $\omega\in E$
$$ F_{\bar\bt}(\omega)\subset F_\bt(\omega). $$
\end{itemize}

\begin{theo} \label{th.poisson1} Under conditions (h1)--(h6), the sequence of point processes
$$\left\{\nu_\bt^M,\frac{\bt}{M}\right\}_{\xi_\bt^M=1, \bt \in \Pi_M}$$ 
converges as $M \to \infty$ to a Poisson process  with intensity $\bbc$  on 
$$(\fX\times \bPi,  \fm\times Leb).$$
\end{theo}

Assume now 
that there is another form $\hat{\l}(\bt)=\hat\sigma_0+\sum_{j=1}^d \hat\sigma_j \bt_j$ 
such that  $\hlambda(\bt)>\lambda_1(\bt)$ on $\text{Int}(\bPi).$ {Assume that there is another domain $\tcP\supset \cP$
  such that denoting $\tPi_M=\{\bt\in \N^d: \bt/M\in \tcP\}$ we have  that 
  that (h1)--(h6) 
  above are satisfied for $t\in \tPi_M$ and that $\max_{\tcP}\l_1>\max_{\cP}\hat\l.$}
  %where $C(\lambda_1,\hat{\lambda})=\frac{d^2}{M} \max(\|t\| : \exists t'\in \Pi_M {\rm \ such \ that \ } \lambda_1(t) =\hat\l(t'))$, with $\|\cdot\|$ the Euclidean norm. }

Suppose that for each $M$ we have a sequence of $\zeta_\bt^M$ of $\tfX$-valued random variables and assume that 
for any fixed element $\tQ=(\tKa_1,\ldots, \tKa_{J})\in \tcQ$
%there exists $E$ satisfying $\Prob(\tE^c) =\cO(M^{-100d}),$ such that 
the following conditions are satisfied.

%\item[(H6)]  For $t,t'\in \Pi$ such that $ \lambda_1(t') \geq \hat{\l}(t)+s$, there exists  a variable $\zeta^M_{t,t'}$ that is $\cF_{t'}$ measurable and 
%$$\Prob( |\zeta^M_{t} - \zeta^M_{t,t'}|\geq \th^s) \ll    \th^s$$
\begin{itemize}
\item[(h7)]   There exists a sequence $\upsilon_M\to 0$ as 
$M\to\infty,$
and $R>0$ such that
 if $\bt,\bt'\in {\tPi_M}$ satisfy $ \hat{\l}(\bt) \geq \lambda_1(\bt')+R \ln M  \geq \lambda_1(\bt)+2R \ln M$, then for any $\omega\in E$ such that $\xi_\bt(\omega)=1$
$$| \Prob( \zeta^M_{\bt} \in \tKa_j | \cF_{\bt'})(\omega)  -  \tfm(\tKa_j)| \leq \upsilon_M. $$
\item[(h8)]
For $ \bt,\bar\bt \in {\tPi_M}$ with  $\lambda_1(\bar\bt)>\hlambda(\bt)+R\ln M$,  for any $j \in [1,J]$, for
any $\omega\in E$ such that $\xi_\bt(\omega)=1$ 
$$\mathbbm{1}_{\zeta_{\bt}\in \tKa_j} \text{ is  constant on } F_{\bar\bt}(\omega). $$
\end{itemize}

Then we have the following strengthening of Theorem \ref{th.poisson1}.

\begin{theo} \label{th.poisson2}
Under hypothesis (h1)--(h8), the sequence of point processes 
$$\left\{\nu_\bt^M,\zeta_\bt^M,\frac{\bt}{M}\right\}_{\xi_\bt^M=1, \bt \in \Pi_M}$$ 
converges as $M \to \infty$ to a Poisson process with intensity $\bbc$ on 
$$(\fX\times \tfX\times \bPi, \fm\times \tfm\times Leb).$$
\end{theo}

 \subsection{  Proof of Theorem \ref{th.poisson1}} \label{sec.poisson1} $ \ $

 Divide $\cP$ into arbitrarily small subsets 
$\bPi_1, \bPi_2,\dots \bPi_{S}$ of positive volume. 
Fix from the sequence $\cQ$ a partition $Q=(\Ka_1,\ldots,\Ka_P)$  
that is arbitrarily close to the point partition. 

Given any double sequence $l_{p,s} \in \N, (p,s) \in [1,\ldots,P] \times [1,\ldots,S]$, 
we define the event 
\begin{multline}
\label{DefcA}   
\cA = \left\{ \forall (p,s) \in [1,\ldots,P] \times [1,\ldots,S],  \right. \\ 
\text{there are exactly }l_{p,s}\text{ points } \bt \in \Pi \text{ satisfying }   \\ \left.
\frac{\bt}{M}\in \bPi_s \text{ and }  \xi_\bt=\xi_{\bt,p}=1 \right\}.
\end{multline}
%$$ \frac{t}{M}\in \bPi_s, \Psi_1(g_t\cL)\in K_p. $$
For a set $B \subset \R^d$, we denote by $\hat{B}$ the volume of $B$.

To prove Theorem \ref{th.poisson1} it suffices to see that 
\begin{equation} \label{eq.poisson} \lim_{M \to \infty} \mathbb P(\cA) = \prod_{p,s} \left[ \frac{(\bbc \fm(\Ka_p)(\hat{\bPi}_s))^{l_{p,s}}}{l_{p,s}!}  \exp\left(-{ \bbc \fm(\Ka_p)(\hat{\bPi}_s)}\right)\right].\end{equation}

This section is devoted to the proof of \eqref{eq.poisson}. Fix  an arbitrarily | small number $\bar{\delta}>0.$

%{\orange
In the sequel we say that a function $h$ which depends on $M, \brdelta$, and maybe some
other variables such as $\bt\in \Pi_M$, is 
$o_{\bdelta}(1)$ if the following holds : given $ \nu>0$, we can find $\hat\delta(\nu)$ such that if
$\bdelta<\hat\delta(\nu)$, then there exists $\bar M(\brdelta)$ such that
 for $M \geq \bar M(\bdelta)$,  
we have  $|h| \leq \nu$ (uniformly in all the additional parameters). %The notation 
 $O_{\brdelta}(1)$ has  a similar meaning.
%\footnote{\orange Should we add a subindex $\bar \delta$ in all $\cO$ or $o$ of this section or should we just take off the subscript from all of them and say at the beginning that it will always be implicitly included?}  
%
%a quantity that goes to zero with $\bdelta$ as $M \to \infty$ independently of the parameter $t \in \Pi_M$\footnote{Here we mean : $\forall \nu>0$ if $\bdelta<\bdelta(\nu)$ and $M \geq M(\bdelta)$ then  $o_{\bdelta}(1) < \nu$. Should we explain this? /b}.

Partition $\Pi$ into cubes $C_1, \ldots, C_H$ of side size $\bar \delta M$ with one of the faces parallel to $\Ker(\lambda_1).$

\begin{define}   We say that a $k$-tuple $\{(S_l,i_l)\}$,  
$S_1, S_2,\dots S_k 
\subset \{C_1, \ldots, C_H \}$,  $i_1,\ldots,i_k \in  \{1\dots P \}$, 
realizes the event $\cA$  
if the event $\cA$ is realized,  and if the $l_{p,s}$ 
points $\bt^{(1)},\ldots,\bt^{(k)}$ of $\cA$ are such that $\bt^{(l)} \in S_l$ 
and $ \xi_{\bt^{(l)}}=\xi_{\bt^{(l)},i_l}=1$. 
\end{define}

\begin{define} \label{define.generic} We call a collection of cubes  $\bdelta$-generic if the  images of any two of them under $\lambda_1$ are 
distant by more than $3\bdelta M$. We say that a $k$-tuple $\{(S_l,i_l)\}$ is  $\bdelta$-generic if the 
cubes $S_l$ are  $\bdelta$-generic.
\end{define}

To obtain \eqref{eq.poisson}, we shall need the following.

\begin{prop} \label{prop.generic} 
$$ \mathbb P(\cA) =  \mathbb 
P(\cA \text{ is realized by a $\bdelta$-generic collection of cubes}) +   o_{\bdelta}(1).$$
\end{prop}

\begin{prop}  \label{prop.generic2}  Given a $\bdelta$-generic $k$-tuple $\{(S_l,i_l)\}$ 
we have that 
\begin{multline*}  \mathbb P(\cA \text{ is realized by } \{(S_l,i_l)\}) = \bbc^k \brdelta^{dk} 
\left(\prod_{l=1}^k \fm(\Ka_{i_l}) \right) \exp(- \bbc \fm(\fX) \hat{\cP}) (1+o_{{\bdelta}}(1)). \end{multline*}
\end{prop}

\begin{proof}[Proof of  \eqref{eq.poisson}]
By Proposition \ref{prop.generic} we can restrict to the contribution of generic collections of squares. 

Now, there are $n_s= \frac{(\hat{\bPi}_s)}{\brdelta^d}(1+\cO(\bdelta))$ cubes 
in $M \bPi_s$. The number of possible generic choices of $k$-tuples $\{(S_l,i_l)\}$ 
that realize $\cA$ is thus 
$$ \prod_{p,s} \left(\begin{array}{l} n_s  \cr l_{p,s} \end{array}\right) (1+o_{\bdelta}(1)).$$

Applying Proposition \ref{prop.generic2} we get that 
\begin{align*} \mathbb P(\cA)=& \prod_{p,s} \left[ \left(\begin{array}{l} n_s  \cr l_{p,s} \end{array}\right) \left(\bbc \fm(\Ka_p) \brdelta^d\right)^{l_{p,s}}
    \right] \exp\left(- \bbc \fm(\fX) \hat{\cP} \right)  (1+o_{\bdelta}(1))
\\ &= \prod_{p,s} \left[ \frac{(\bbc \fm(\Ka_p)(\hat{\bPi}_s))^{l_{p,s}}}{l_{p,s}!}  \exp\left(-{ \bbc \fm(\Ka_p)(\hat{\bPi}_s)}\right)\right]  (1+o_{\bdelta}(1))\end{align*}
since $\hat{\cP}= \sum (\hat{\bPi}_s)$, and $\fm(\fX)=\sum \fm(\Ka_i)$.  
\end{proof}

We now turn to the proofs of Propositions \ref{prop.generic} and  \ref{prop.generic2}. Proposition \ref{prop.generic} is a direct consequence of Lemma \ref{LmDensity}(2) below.
\begin{lemma}  We have \label{LmDensity}
\begin{itemize} 
\item[(1)] $\Prob(\exists \bt\in \Pi_M: \xi_\bt>1)=\cO(M^{-d}).$

\item[(2)] $\Prob(\exists \bt', \bt''\in \Pi_M: \xi_{\bt'}\geq 1, \xi_{\bt''}\geq 1$ and 
$|\lambda_1(\bt')-\lambda_1(\bt'')|\leq 3\bdelta M)\leq C \bdelta.$
\end{itemize} 
\end{lemma}

\begin{proof} Parts (1) and (2)  follow by  summation 
of (h2) and (h3) respectively.  \end{proof}
%$$ \Prob\left(\exists j\leq H:\quad  \sum_{t\in C_j} \xi_t>1\right)=o_{{\bdelta}}(1). $$
\begin{proof}[Proof of Proposition \ref{prop.generic2}] By Lemma \ref{LmDensity} $(1)$ we know that up to excluding a small probability set we have that each $\xi_\bt, \bt\in \Pi_M$ 
is either equal to $1$ or $0$. Hence, 
given a $\bdelta$-generic $k$-tuple $\{(S_l,i_l)\}$ we have that 
\begin{multline*}  \mathbb P(\cA \text{ is realized by } \{(S_l,i_l)\}) =
\Prob\left( \exists  (\bt^{(1)},\ldots,\bt^{(k)}) \in S_1 \times \ldots \times S_k: \right. \\ \left. 
%\xi_{t^{(q)}}=
\eta_{\bt^{(l)},i_l}=1,    \forall l \in [1,k],  \eta_\bt=0 \text{ for } \bt\notin \{\bt^{(1)},\ldots,\bt^{(k)}\} \right)(1+o_{{\bdelta}}(1)).
\end{multline*}
We will thus finish if we prove the following 
{ \begin{lemma}\label{lemma.generic}  Given a $\bdelta$-generic $k$-tuple $\{(S_l,i_l)\}$ 
we have that 
\begin{multline*}  
\Prob\left( \exists  (\bt^{(1)},\ldots,\bt^{(k)}) \in S_1 \times \ldots \times S_k: \right. \\ \left. 
%\xi_{t^{(q)}}=
\eta_{\bt^{(l)},i_l}=1,    \forall l \in [1,k],  \eta_\bt=0 \text{ for } 
\bt\notin \{\bt^{(1)},\ldots,\bt^{(k)}\} \right)     \\ 
= \bbc^k \bdelta^{dk} \left(\prod_{l=1}^k \fm(\Ka_{i_l}) \right) \exp(- \bbc \fm(\fX) \hat{\cP}) (1+o_{{\bdelta}}(1)). \end{multline*}
\end{lemma}}

The proof of Lemma \ref{lemma.generic} will be given in the next subsection.
\end{proof}

 \subsection{  Compatible strips. Proof of Lemma \ref{lemma.generic}.}
\label{SSIndStrip}

Divide $\Pi=\Pi_M$ into strips  parallel to $\Ker\lambda_1$, of the form $\lambda_1^{-1} [s_{j-1}, s_j]$,  of width $\bar{\delta} M$. These strips have common boundaries.
To create some independence between the strips we let $\brs_j=s_{j-1}+\sqrt{M}$ and define the strips
$\Pi^j=\lambda_1^{-1} [\brs_j, s_j],$ for $j \geq 1$. We still denote their width by ${\bdelta}M$ 
(up to changing the definition of  $\bdelta$ to $\bar{\delta}-1/\sqrt{M}$).   
Let $L$ be the total number of the strips (observe that $L$ is of order $\bdelta^{-1}$). 
We can naturally assume that the partition into cubes $C_1,\ldots,C_H$ is such that every cube 
is completely included in a strip $\Pi^j$.
Note that, similarly to the proof of Proposition \ref{prop.generic},
  the probability that there is a point in a buffer zone is negligible.

\begin{define}[Type A and B strips] \label{definition.typeA} Given a $\bdelta$-generic $k$-tuple $\{(S_l,i_l)\}$
we call a strip $\Pi^j$ which contains a square $S_l$ a  {\it type A} strip. 
The remaining strips (they are a majority) are called {\it type B} strips. If $\Pi^j$ is 
of type B we say that
it is compatible  if $\eta_\bt=0$ for all $t\in\Pi^j.$ 
If $\Pi^j$ is of type A we say that
it is compatible if for $q$ such that  $S_q \subset \Pi^j$, 
there exists $\bt^{(q)} \in S_q$  such that $\eta_{\bt^{(q)},i_q}=1$  and 
$\eta_{\bar\bt}=0$ for $\bar\bt\in   \Pi^j-\{\bt^{(q)}\}.$  \end{define}

Denote $p_0=1,$ and for $j>0$
$$ p_j=\Prob(\Pi^l \text{ are compatible for } l\leq j). $$

Lemma \ref{lemma.generic} becomes thus equivalent to showing that (recall that $L$ is the total number of strips) 
$$p_L=\bbc^k \bdelta^{dk} \left(\prod_{q=1}^k \fm(\Ka_{i_q}) \right) \exp(- \bbc \fm(\fX) \hat{\cP}) (1+o_{\bdelta}(1)).$$
The latter is derived immediately by an iterative application of  \eqref{RecCompA} or 
\eqref{RecCompB} of the following lemma, according to whether a strip is of type A or B respectively. 

\begin{lemma} \label{lemma.compatible}  If $\Pi^{j+1}$ is of type A, with $S_q \in \Pi^{j+1}$, then 
\begin{equation}
\label{RecCompA}
p_{j+1}= \bbc \fm(\Ka_{i_q}) \bdelta^d p_j(1+o_{\bdelta}(1))%+\cO\left(\frac{1}{\ln M}\right)  
\end{equation}
and if $\Pi^{j+1}$ is of type B then
\begin{equation}
\label{RecCompB}
p_{j+1}=
p_j\left(1-\bbc \fm(\fX) \hat{{\bf \Pi}}^{j+1}(1+o_{\bdelta}(1))\right)
\end{equation}
with ${\bf \Pi}^{j+1}={\Pi}^{j+1}/M$.
\end{lemma}

\medskip

\noindent{\it Proof of Lemma \ref{lemma.compatible}.} We first prove \eqref{RecCompA}. So, we assume $\Pi^{j+1}$ is of type A,  with $S_q \in \Pi^{j+1}$. 
Let $\tilde{\bt}$ be such that 
\begin{equation} \label{Fj}  \min_{\bt \in \Pi^{j+1} } \lambda_1({\bt})-R \ln M 
\geq \lambda_1(\tilde{\bt})  \geq  \max_{\bt \in \Pi^j } \lambda_1({\bt})+R \ln M . \end{equation}
We define $\cF_j:= \cF_{\tilde{\bt}}.$

We will need a control on the simultaneous occurrences of 
$\eta_\bt=1$ and $\eta_{\bt'}=1$ for pairs $(\bt,\bt')$.  Denote
$$\displaystyle V_j=\sum_{\bt, \bt'\in \Pi^j, \bt\neq \bt':  |\lambda_1(\bt)-\lambda_1(\bt')|\leq 3R \ln M} \eta_\bt \eta_{\bt'}. $$
\begin{lemmata} 
\label{SLOff}
There is an $\cF_j$ measurable set $E_j \subset E$ such that $\Prob(E_j^c) \leq \frac{C\ln M}{\sqrt{M}}$ 
and for $\omega\in E_j$
\begin{equation}  \label{R3} \E(V_{j+1}|\cF_j)(\omega) \leq1/\sqrt{M} \end{equation} 
and for $\bt, \bar{\bt} \in \Pi^{j+1}$ such that  $\lambda_1(\bar{\bt})  \geq \lambda_1({\bt})+3R \ln M$ and $\omega\in E_j$ we
have
\begin{equation} \E( \eta_{\bt} \eta_{\bar\bt}|\cF_{j})(\omega) \ll   M^{-2d}. \label{re} \end{equation}
\end{lemmata}

The proof is technical and involves all the properties (h3)--(h6). 
We differ it to the Appendix \ref{appB}.
 
 As a consequence we get the following. 
 \begin{lemmata} \label{lemma69} We have for $\Pi^{j+1}$ of type A,  with 
 $S_q \subset \Pi^{j+1}$, and $\omega \in E_j$ 
\begin{equation} \label{eqprob} 
\fp:=\Prob(\Pi^{j+1} \text{ is compatible }|\cF_j)(\omega) = \bbc \fm(\Ka_{i_q}) \bdelta^d(1+o_\brdelta (1)). \end{equation}
\end{lemmata}
 \begin{proof} Recall that by definition,
\begin{equation}\label{ppp}  \fp=\Prob(\exists \bt\in S_q :  \eta_{\bt,i_q}=1, \text{ and }
\eta_{\bar\bt}=0 \text{ for }\bar\bt\neq \bt, \bar\bt \in \Pi^{j+1}|\cF_j)(\omega).\end{equation}
We omit in the rest of this proof the mention of $\omega$, that is assumed to be fixed in $E_j$. Since,  for a fixed $\bt \in S_q$, Bonferroni inequalities imply that 
\begin{multline*} \E\left(\eta_{\bt,i_q}- \sum_{\bt' \in \Pi^{j+1}, \bt'\neq \bt} 
\eta_{\bt} \eta_{\bt'}|\cF_j\right)  \leq \\   
\Prob(\eta_{\bt,i_q}=1, \text{ and }\eta_{\bt'}=0 \text{ for }\bt' \neq \bt, \bt' \in \Pi^{j+1}|\cF_j) \leq  
\E(\eta_{\bt,i_q} |\cF_j). \end{multline*}
Hence, because $\omega\in E_j$,
$$
 \left| \fp - \sum_{\bt \in S_q} %\cap \Pi^{j+1}} 
\E (\eta_{\bt,i_q} |\cF_j  ) \right|\leq\sum_{{ \bt, \in S_q,} \, %\cap \Pi^{j+1}, 
\bt'\neq \bt\in \Pi^{j+1}} 
\E(\eta_{\bt} \eta_{\bt'}|\cF_j) \label{eta1}    
\leq     \cO(1/\sqrt{M}) + \cO \left(\bdelta^{d+1}\right)  $$
%\nonumber  \end{align*} 
%\margem{replaced  $\cO (\tdelta^2)$ that was not sufficient by with  $\cO (\tdelta^3)$ that is what we really have /b}  
where we used \eqref{R3} for the terms with $|\lambda_1(\bt)-\lambda_1(\bt')|\leq 3R \ln M$, 
and  \eqref{re} for 
the terms with $|\lambda_1(\bt)-\lambda_1(\bt')|> 3R \ln M$.

On the other hand (h4c) implies that for $\omega\in E$
\begin{equation*}
\label{ProbExp}
\sum_{ t\in S_q} %\cap \Pi^{j+1}} 
\E(\eta_{t,i_q}|\cF_j)=\bbc \fm(\Ka_{i_q}) \bdelta^d (1+o_{\bdelta}(1)). 
\end{equation*}
proving \eqref{eqprob}.
\end{proof}

\medskip 
 
\noindent \begin{proof}[Proof of \eqref{RecCompA}.]
With the set $E_j$ from Sublemma \ref{SLOff} we have
$$p_{j+1}= \Prob((\Pi^1,\dots,\Pi^{j+1} \text{ are compatible})\cap E_j)  + \cO(\ln M/\sqrt{M})$$
$$= \E(\mathbbm{1}_{\Pi^1,\dots,\Pi^j \text{ are compatible}} \mathbbm{1}_{E_j} 
\Prob(\Pi^{j+1} \text{ is compatible }|\cF_j)) + \cO(\ln M/\sqrt{M})$$
where the last step relies on the fact that $E_j\subset E$ and so, by (h5), the event 
$\mathbbm{1}_{\Pi^1,\dots,\Pi^j \text{ are compatible}}$ is constant on $F_j(\omega)$ for
$\omega\in E_j.$ We finish using \eqref{eqprob} and one more time the fact that 
$\Prob(E_j^c) \leq \frac{C\ln M}{\sqrt{M}}$.
 \end{proof}

\noindent \begin{proof}[Proof of \eqref{RecCompB}.]
\eqref{RecCompB} follows from Sublemma \ref{lemma692} below 
exactly in the same way as 
 \eqref{RecCompA} follows from Sublemma \ref{lemma69}. 
 \begin{lemmata} \label{lemma692}  We have for $\Pi^{j+1}$ of type B and $\omega \in E_j$ 
\begin{equation} \label{eqprob2} 
\fp':=\Prob(\Pi^{j+1} \text{ is compatible }|\cF_j)(\omega) = 1- \bbc \fm(\fX) \hat{{\bf \Pi}}_{j+1} (1+o_{\bdelta}(1)). \end{equation}
\end{lemmata}
\begin{proof} 
We omit in the rest of this proof the mention of $\omega$, that is assumed to be fixed in $E_j$. Observe that 
$$\fp'=\Prob\left(\eta_\bt=0 \text{ for all }  \bt \in \Pi^{j+1}|\cF_j \right) 
= 1-\bar{\fp}-\hat{\fp}
$$
with 
\begin{align*}\bar{\fp}&= \Prob\left(\text{there exists a unique } \bt  \in \Pi^{j+1}  \text{ such that } \eta_\bt=1|\cF_j \right) \\ 
\hat{\fp}&= \Prob\left(\text{there exists at least a pair } (\bt,\bt')  \in \Pi^{j+1}  \text{ such that } \eta_\bt=\eta_{\bt'}=1|\cF_j \right) .
\end{align*}
 
From the proof of \eqref{RecCompA} (taking the sum over all the squares of $\Pi^{j+1}$ and $i_q \in [1,P]$ in \eqref{eqprob} and \eqref{ppp}) we have that 
$$ \bar{\fp}=  \bbc \fm(\fX) \hat{{\bf \Pi}}_{j+1} (1+o_{\bdelta}(1)).$$
On the other hand
$$ \hat{\fp}\leq  \sum_{(\bt,\bt') \in \Pi^{j+1}, \bt'\neq \bt} 
\E(\eta_{\bt} \eta_{\bt'}|\cF_j)    
\leq     \cO(1/\sqrt{M}) + \cO (\bdelta^2)$$
where we used \eqref{R3} for the terms with $|\lambda_1(\bt)-\lambda_1(\bt')|\leq 3R \ln M$, 
and  \eqref{re} for 
the terms with $|\lambda_1(\bt)-\lambda_1(\bt')|> 3R \ln M$.

Since $\hat{\fp}=o_{\bdelta}(\bar{\fp})$, \eqref{eqprob2} follows. 
\end{proof}

The proof of Lemma \ref{lemma.compatible}, and therefore  
of Lemma \ref{lemma.generic} and Theorem \ref{th.poisson1}  is now complete. \end{proof}

%\end{document}
 \subsection{  Proof of Theorem \ref{th.poisson2}}

The proof of Theorem \ref{th.poisson2} is similar to the proof of Theorem \ref{th.poisson1}.
In addition to using (h4), (h5) and (h6) to control $\nu_\bt$ we use (h7) and (h8) to control 
$\zeta_\bt.$
Let us briefly describe the necessary modifications.

We need a stronger notion of generic collections of squares than that of Definition  \ref{define.generic}. 

\begin{define}
\label{define.generic2} We call a collection of cubes 
$S_1,\dots,S_k$ strongly $\bdelta$-generic if the distance of any two among $2k$ intervals
$$ \lambda_1(S_1), \lambda_1(S_2), \dots, \lambda_1(S_k), 
\hlambda(S_1), \hlambda(S_2), \dots, \hlambda(S_k) $$
is at least $3 \bdelta M.$ We say that a $k$-tuple 
$\{(S_l,i_l,j_l)\}$, where $i_l \in [1,P], j_l \in [1,J]$, is strongly $\bdelta$-generic 
if $S_1,\dots,S_k$ is strongly $\bdelta$-generic.
\end{define}

The same argument as in the proof of Theorem \ref{th.poisson1}
shows that the contribution of non strongly generic configurations becomes negligible as 
$\bdelta\to 0.$

%The key in the proof of Theorem \ref{th.poisson1} lies in the estimation of the probability in (b)  of 

For the proof of Theorem \ref{th.poisson2} 
%we have to assume in addition that a choice of $k$ indices $\{j_q\}_{q\leq k}$ is given and 
we need the following generalization of Lemma \ref{lemma.generic}.

\begin{lemma}  Given a strongly $\bdelta$-generic  $k$-tuple $\{(S_l,i_l,j_l)\}$, we have that 
\label{LmDensity2}

\begin{multline*} \Prob\left( \exists  (\bt^{(1)},\ldots,\bt^{(k)}) \in 
S_1 \times \ldots \times S_k: \right. \\ \left. 
\eta_{\bt^{(l)},i_l}=1, \zeta_{\bt^{(l)}} \in \tKa_{j_l}   \forall l \in [1,k], \eta_\bt=0 \text{ for } \bt\notin
 \{\bt^{(1)},\ldots,\bt^{(k)}\} \right)\\
= \bbc^k \bdelta^{dk} \left(\prod_{l=1}^k \fm(\Ka_{i_l}) \tfm(\tKa_{j_l}) \right) \exp(- \bbc \fm(\fX) \hat{\cP}) (1+o_{\bdelta}(1)). \end{multline*}
\end{lemma} 

The derivation of Theorem \ref{th.poisson2} from Lemma \ref{LmDensity2} 
is exactly the same as the derivation of Theorem \ref{th.poisson1} from Lemma \ref{lemma.generic}. 

We thus focus on explaining the difference in the proof of Lemma \ref{LmDensity2} from that of  Lemma \ref{lemma.generic}. 

We need to replace Definition \ref{definition.typeA} of type A and B strips by the following definition that takes into account a third kind of strips. We keep the division $\{\Pi^j\}_{j=1,\ldots,L}$ of $\Pi=\Pi_M$ into the  parallel strips of width $\bar{\delta} M$. 
\begin{define}[Type A, B and C strips]  \label{definition.typeC} For $j\in [1,L]$, we say that $\Pi^j$ is of type A if it contains a cube 
$S_q$ from our configuration. For such a pair $(\Pi^j,S_q)$ we say that  a strip $\Pi^p$ is associated to $S_q$ if 
$\lambda_1(\Pi^p)\cap \hlambda(S_q)\neq\emptyset.$ 
Note that since   $\hlambda(\bt)>\lambda_1(\bt)$ on $\text{Int}(\bPi)$, we have that $p>j$.
We call the union of all strips associated to a given $S_q$ a type C strip. Note that type C strips are a union of  a uniformly bounded number of consecutive strips $\Pi^i$, so their width is still $\cO(\brdelta).$   The strips from the partition $\{\Pi^j\}$ which are neither type A nor belong to a type C strip 
will be called type B strips.  For convenience, after slightly decreasing the total number of strips to $L'\leq L$, we still call the collection of strips that we get $\{\Pi^j\}_{j=1,\ldots,L'}$ (the fact that the type C strips are slightly larger than the others will have no consequence on the subsequent proofs).
\end{define}

Observe that there may exists $\bar k < k$ such that for  $q \in \{\bar k+1,\ldots,k\}$ it holds that  
$\lambda_1(\Pi_M)\cap \hlambda(S_q)=\emptyset.$ If not, we just take $\bar k =k$. Thus, we have $k$ type A strips, $\bar k$ type C strips and $L-k -\bar k$ type B strips. {It will be convenient for the proof below to add $\Pi^{L'+1},\ldots,\Pi^{L'+k-\bar k}$
  strips in $\tPi_M$ where for every $j\in [1, k-\bar k]$
$$ \Pi^{L'+j}=\{\bt\in \tPi_M: \exists \bt'\in S_{\brk+j}: |\lambda_1(\bt)-\hlambda(\bt') |\leq R\ln M\}. $$
We call $\Pi^{L'+j}$
  type C strips associated to $S_{\bar k +j}$.} {The fact that these additional strips are well defined is due to our assumption that $\tcP$ is sufficiently large so that $\max_{\tcP}\l_1>\max_{\cP}\hat\l.$}

Denote the total number of strips $L=L'+k-\bar k$. 

%even thoughtype $C$ strips consist of unions of several strips from the proof of Theorem \ref{th.poisson1}.

%\footnote{\orange should we precise that type $C$ strips are a union of at most a uniformly bounded finite number of consecutive strips $\Pi^i$, thus can be treated as a single strip?}

The definition of compatibility for type A and type B strips remains the same as in Definition \ref{definition.typeA}. 
Given a cube $S_l$ of our collection, and a strip $\Pi^j$ of type $A$ such that $S_l \subset \Pi^j$ and suppose that $\Pi^j$ is compatible. This gives  $\bt^{(l)}\in S_l$ satisfying the 
conditions of Definition \ref{definition.typeA} ($\eta_{\bt^{(l),i_l}}=1$ and $\eta_\bt=0$ 
for all $\bt \in \Pi^j-\{\bt^{(l)}\}$). Assume now that $\Pi^p$, $p>j$ is a type C strip  
associated to $S_l$. Then, $\Pi^p$ is called compatible if $\zeta_{\bt^{(l)}}\in \tKa_{j_l}$
and
{$\eta_\bt=0$ for all $\bt \in \Pi^p\cap \Pi_M$.
  In particular, for type $C$ strips which are disjoint from $\Pi_M$ the only requirement that we have is that
  $\zeta_{\bt^{(l)}}\in \tKa_{j_l}$.} 
%in case $l\leq \bar k$ (or equivalently $p\leq L'$),} 
%\footnote{ When $l>\bar k$, we have that $p\in \{L'+1,L'+k-\bar k\}$, the C strip has no physical meaning, and compatibility only requires that $\zeta^M_{\bt^{(l)}}\in \tKa_{j_l}$.}

Let $ p_j=\Prob(\Pi^l \text{ are compatible for } l\leq j).$ 
Then Lemma \ref{LmDensity2} becomes thus equivalent to showing that %(recall that $L=L'+k-\bar k$ is the total number of strips) 
$$ p_{L}= \bbc^k \bdelta^{dk} \left(\prod_{l=1}^k \fm(\Ka_{i_l}) \tfm(\tKa_{j_l}) \right) \exp(- \bbc \fm(\fX) \hat{\cP}) (1+o_{\bdelta}(1)). $$

Then, as in the proof of Lemma \ref{lemma.generic}, Lemma 
\ref{LmDensity2} follows inductively from 
\begin{lemma} \label{lemma.compatible2}  If $\Pi^{j+1}$ is of type A, with $S_q \in \Pi^{j+1}$, then 
\begin{equation}
\label{RecCompA2}
p_{j+1}= \bbc \fm(\Ka_{i_q}) \bdelta^d p_j(1+o_{\bdelta}(1));%+\cO\left(\frac{1}{\ln M}\right)  
\end{equation}
if $\Pi^{j+1}$ is of type B then
\begin{equation}
\label{RecCompB2}
p_{j+1}=
p_j\left(1-\bbc \fm(\fX) \hat{{\bf \Pi}}^{j+1}(1+o_{\bdelta}(1))\right);
%+\cO\left(\frac{1}{\ln M}\right).
\end{equation}
and  if $\Pi^{j+1}$ is a type C strip  associated  to a square $S_q$
%\margem{\color{red}  should be something like associated to a time $t^{(q)} \in S_q$ but $t^{(q)}$ is not a priori in the definitions and hypotheses...} , 
then
\begin{equation} \label{RecCompC} p_{j+1}=
p_j \tfm(\tKa_{j_q}) (1+o_{\bdelta}(1)).
\end{equation}
\end{lemma}

\begin{proof}  We consider the same $\sigma$-algebra $\cF_j$ as in the proof of Lemma \ref{lemma.compatible}, that is 
$\cF_j:=\cF_{\tilde{\bt}}$
with $\tilde{\bt}$  such that 
$$  \min_{\bt \in \Pi^{j+1} } \lambda_1({\bt})-R \ln M 
\geq \lambda_1(\tilde{\bt})  \geq  \max_{\bt \in \Pi^j } \lambda_1({\bt})+R \ln M.$$ 

We then have from (h5) and (h8) that, regardless of the type of the strips, for $\omega \in E$
$$\mathbbm{1}_{\Pi^1,\dots,\Pi^j \text{ are compatible}} \text{ is constant on } F_j(\omega).$$
Indeed, events of the form $\mathbbm{1}_{\eta_{\bt^{(l)}}\in \Ka_{i_l}}$, with 
$\lambda_1(\bt^{(l)}) \in  \Pi^1 \cup \ldots \cup \Pi^j$ and 
$\mathbbm{1}_{\zeta_{\bt^{(l)}}\in \tKa_{j_l}}$ with  
$\hlambda(\bt^{(l)}) \in \lambda_1( \Pi^1 \cup \ldots \cup \Pi^j)$ are constant on 
$F_j(\omega)$ by (h5) and (h8).

Observe that \eqref{RecCompA2} and \eqref{RecCompB2}  as well as their proofs are then identical to   \eqref{RecCompA} and \eqref{RecCompB} of Lemma \ref{lemma.compatible}. As for \eqref{RecCompC}, it follows from
\begin{lemmata} \label{lemma69C} If $\Pi^{j+1}$ is a type C strip associated to a square $S_q$, and $\omega \in E$
 {is such that on $\omega,$ $\Pi^l$ are compatible for $l\leq j$,} we have  
\begin{equation} \label{eqprobC} 
\Prob(\Pi^{j+1} \text{ is compatible }|\cF_j)(\omega) =   \tfm(\tKa_{j_q}) (1+o_\brdelta (1)).   \end{equation}
\end{lemmata}
 \begin{proof} By definition there exists some $\bt^{(q)} \in S_q$ such that $\hlambda(\bt^{(q)}) \in \lambda_1(\Pi^{j+1})$. 
 By the definition of $\tilde{\bt}$, this implies that $\hlambda(\bt^{(q)}) > 
 \lambda_1(\tilde{\bt})+R \ln M.$ 
{ 
Next
$$ \Prob( \zeta^M_{\bt^{(q)}} \in \tKa_{j_q} | \cF_j)(\omega)-\Prob(\exists \bt\in \Pi^{j+1}: 
\eta_\bt\neq 0|\cF_j)(\omega) $$
$$ \leq 
\Prob(\Pi^{j+1} \text{ is compatible }|\cF_j)(\omega)  \leq 
\Prob( \zeta^M_{\bt^{(q)}} \in \tKa_{j_q} | \cF_j)(\omega)
 $$
Since $\omega\in E$, $\bt^{(q)}$ is constant on $\cF_j(\omega).$
Hence (h7) gives
$$\Prob( \zeta^M_{\bt^{(q)}} \in \tKa_{j_q} | \cF_j)(\omega)=
\tfm(\tKa_{j_q}) +o_{\bar \delta}(1)
$$
while (h4c) gives
$$ \Prob(\exists \bt\in \Pi^{j+1}: \eta_\bt\neq 0|\cF_j)(\omega)=\cO(\bdelta). $$
Combining the last three estimates we obtain
\eqref{eqprobC}.  }
%follows directly from (h7) and the fact that 
%\margem{\color{red} we see here that $t^{(q)}$ is implicitly required when the probability of a type C strip is estimated
%where the factor $(1+o (1))$ accounts for the probability that $\eta_t=0$ for all $t \in \Pi^{j+1}$.}
 \end{proof}

We have completed the proof of Lemma \ref{lemma.compatible2} and thus of  Theorem \ref{th.poisson2}.
\end{proof}

 \section{  Rate of equi-distribution of unipotent flows}
\label{SSSLMix}
%Let $\cM=SL_3(\reals)/SL_3(\integers).$ Denote
%$$ 
%g_{t_1, t_2}=\left(\begin{array}{rrr}
%e^{-t_1} & 0 & 0 \cr
%0 & e^{-t_2} & 0 \cr
%0 & 0 & e^{t_1+t_2} 
%\end{array}\right). $$

 \subsection{  Notation.}
Recall that $\{g^\bt\}_{\bt\in \R^d}$ is the $d$ parameter subgroup given by \eqref{Cartan}.
We let 
\begin{equation}
\label{Entropy}
T(\bt)=\sum_{j=1}^d \bt_j.
\end{equation}
In the proof of Theorem \ref{ThPLat} we will need to show 
asymptotic independence between the moments 
of time such that $\Phi(g^\bt\cL)\geq1$ as well as randomness of the values taken then by 
$\Psi(g^\bt \cL, N e^{-T(\bt)} )$. 
For this we will rely on the fact that  the action of $g^\bt$ on $\cM$ is partially hyperbolic in the sense that
$$ T\cM=E_0\oplus \sum_{q<p}^{d+1} \left(E_{qp}^+\oplus E_{qp}^- \right)$$
where
$E_0$ is tangent to the orbit of $g$ and $E_q^\pm$ are invariant one dimensional distributions.
The corresponding Lyapunov exponents are 
$\pm\lambda_{qp}$ with $q<p\leq d$ and $\pm \lambda_q$ (corresponding to $p=d+1$)
where
$$ \lambda_{qp}=\bt_q-\bt_p, \quad \lambda_q=T(\bt)+\bt_q.$$
$E_{qp}^\pm$ are tangent to foliations $W_{qp}^\pm$ which are orbit foliations for groups 
$h_{qp}^\pm$
where $h^+_{qp}(u)$ is the matrix with ones on the main diagonal,
$u$ in the $p$-th column of the $q$-th row, and zero in all other places, while
$h_{qp}^-(u)$ are transposes of $h_{qp}^+(u).$
Below we shall abbreviate $E_{1(d+1)}^+,W_{1(d+1)}^+,h_{1(d+1)}^+$ with  
$E_1,W_1,h_1$. 

We also use the notation $\mu(A) =  \int_{\cM} A(x) d\mu (x)$, and  for $g \in SL_{d+1}(\R)$, 
we denote by $A(g \cdot)$ the function whose value at $x$ is $A(gx).$

The Sobolev norm of index $\bs$ will be denoted by $||\cdot||_\bs.$ We will always assume that
$\bs$ is an integer.

 \subsection{  Mixing for smooth functions.}
Here we recall the mixing properties of homogeneous flows.
To fix the notation we discuss only the subgroup $h_1(u)$
(which is the only subgroup used in the proof of Theorem \ref{ThPLat}
given in Section \ref{SSDenom}), however similar results holds for all other unipotent subgroups
including $h_{pq}^\pm.$

 By \cite{KM}, Theorem 2.4.5  there exists $\bs$ and constants $C,\kappa>0$ such that if $A,B \in H^\bs$ 
  then 
\begin{equation}
\label{SLEM}
 |\mu(A(\cdot) B(g^\bt \cdot))-\mu(A)\mu(B)|\leq C ||A||_\bs ||B||_\bs e^{-\kappa \max|\bt_j|} 
\end{equation}

We recall that this implies that there exists $C>0$ such that 
\begin{equation}
\label{SLPM}
 |\mu(A(\cdot) B(h_1(u) \cdot))-\mu(A)\mu(B)|\leq C ||A||_\bs ||B||_\bs u^{-\kappa} .
\end{equation}
Indeed let $\theta \in [0,\pi]$ be such that $\cos \th = -e^{-t}$ and let 
$$R(t) =  \left(\begin{array}{ccccc}     \cos\theta & 0 & \dots & 0 & \sin\theta   \\
                                  
                                                            0 & 1 & \dots & 0 & 0 \\
                                                            \dots & \dots & \dots & \dots & \dots \\
                                                            0 & 0 & \dots & 1 & 0 \\
                                                            -\sin\theta & 0 &\dots & 0 & \cos\theta \\
\end{array}\right), $$
$$\fg_t=\textrm{diag}(e^{-t}, 0, \dots 0, e^t).$$
It is then immediate to observe that %for any $r>0$
\begin{equation*}
%\label{IwaH3}
 M(t):= h_1(e^t) R(t)^{-1} \fg_{-t}
 \end{equation*}
is uniformly bounded for $t>0$. By invariance of $\mu$ we get 
$$\mu(A(\cdot) B(h_1(e^t) \cdot)) = \mu(A(\cdot) B(M(t) \fg_t R(t) \cdot))=
\mu(A_t(\cdot) B_t(\fg_t \cdot))$$
with $ A_t(\cdot):= A ( R(t)^{-1} \cdot),$ $B_t(\cdot):= B (M_t \cdot).$ Note that
 $||A_t||_\bs \leq ||A||_\bs,$ $||B_t||_\bs \ll    ||B||_\bs$ {\color{blue} 
 %for every fixed value of} 
 uniformly in $t \geq 0$} Hence, \eqref{SLPM} follows from \eqref{SLEM} applied to $A_t$, $B_t$ and $\fg_t$ with $t=\ln u.$ 

 \subsection{  Equidistribution.}
\label{SS-EU}

The functions we are going to work with in Section \ref{SSDenom} are not smooth but they can be well approximated
by the smooth functions. This motivates the following definition. 

\begin{define} \label{sr}
Given $\bs, \br \geq 0$, we say that a function $A: \cM \to \reals$ is in $H^{\bs,\br}$ with $||A||_{\bs, \br}=K$ if given $0<\eps\leq 1$ there are 
$H^\bs$-functions $A^-\leq A\leq A^+$ such that
 $$ ||A^+-A^-||_{L^1(\mu)}\leq \eps \text{ and } ||A^\pm||_\bs\leq K \eps^{-\br}$$
where $\mu$ is the Haar measure on $\cM$ and $||\cdot||_\bs$ denotes the Sobolev norm of
index $\bs.$
\end{define}

%In all the sequel $\kappa_0$ will be a fixed strictly positive constant.

We say that $\gamma$ is a $W_1$ curve of size $L>0$ if for some $y \in \cM$ we have 
$\gamma = \{ h_1(\tau) y : \tau \in [0,L]\}$. For a  function $A: \cM \to \reals$ we use the notation  
 $$\int_{\gamma} A = \frac{1}{L} \int_0^L A(h_1(s)y) ds.$$
 
%  =  \int_{\cM} A(gx) d\mu (x)$. 
 
\begin{define} \label{def.representative} Fix $\kappa_0 >0$.  Let $L>0$ and $\cP$ be a partition of $\cM$ into $W_1$-curves of length $L$ and denote by $\gamma(x)$ the element of $\cP$ 
containing $x$. Given a finite or infinite sequence of integers $(k_n)$ and a function $A \in H^{\bs,\br}$, we say that 
$\cP$ is $\kappa_0$-{\it representative} with respect to $((k_n), A)$ if for any $n$ 
\begin{equation}
\label{ShiftMix}
\mu\left(x \in \cM : \; \left|  \int_{g^{k_n}\gamma(x)} A - \mu(A) \right| \geq \cK_A 
L_n^{-\kappa_0}
\right)\leq L_n^{-\kappa_0}
\end{equation}
where  $\cK_A=||A||_{\bs, \br}+1$,  and $L_n = Le^{\lambda_q(k_n)}$ is the length of %the $W_q$ curve 
$g^{k_n} \gamma(x)$.

We call the points $x$ such that 
$$\text{for every }  n : \;    \left|  \int_{g^{k_n}\gamma(x)} A- \mu(A) \right|\leq \cK_A 
L_n^{-\kappa_0}$$
{\it representative} with respect to $(\cP, (k_n), A)$. Observe that if
$\cP$ is $\kappa_0$-{\it representative} with respect to $((k_n), A)$ and
$$ \sum_n \left(L_n\right)^{-\kappa_0}\leq \eps $$
then the set of representative points has measure larger than $1-\eps$.
\end{define}

The goal of this section is to show the following. 

\begin{prop} \label{mixing} There exists $\bs, \kappa_0, \eps_0>0$ such that 
for any $0\leq \br \leq \bs$, 
$0<\eps \leq \eps_0$, any function $A \in H^{\bs,\br}$, any  $ L$ and any sequence 
$\{ k_n \}$ satisfying 
$$ \sum_n \left(L e^{\lambda_q(k_n)}\right)^{-\kappa_0}\leq \eps, $$
there exists a partition $\cP$ of $M$ into $W_1$-curves of length $L$ that is  $\kappa_0$-{\it representative} with respect to $((k_n), A)$.

(b) If $\cL \in \cM$ is distributed according to a probability measure $\tmu$ that has a bounded
density with respect to the Haar measure $\mu$, then the result of part (a) holds with \eqref{ShiftMix} in the definition of representative partitions replaced by 
\begin{equation*}
%\label{ShiftMixtilde}
\tmu \left(x \in \cM : \; \left|  \int_{g^{k_n}\gamma(x)} A - \mu(A) \right| \geq \cK_A 
L_n^{-\kappa_0}
\right)\leq \tC L_n^{-\kappa_0}
\end{equation*}
where $\tC$ is the maximum of the density of $\tmu$ with respect to $\mu.$
%\end{remark}
\end{prop}

\begin{remark} 
  The requirement that $\br \leq \bs$ will only serve to maintain the exponent $\kappa$ in the speed of equidistribution in (\ref{ShiftMix}) bounded from below. Any upper bound on $\br$ would yield a lower bound on $\kappa$  but it will be sufficient for us in the sequel to consider functions in $H^{\bs,\bs}$, since we will have to deal with characteristic functions of
  nice sets (cf. \S \ref{sec.norms}).
\end{remark}

\begin{proof}
It suffices to prove part (a). Part (b) follows from (a) since for any set $\Omega$ we have
$\tmu(\Omega)\leq\tC\mu(\Omega).$

Without loss of generality we will work with functions $A$ having zero average, that is $\mu(A)=0$. We will first prove Proposition 
\ref{mixing} for $A \in H^\bs$ and then generalize it to $A \in H^{\bs,\br}$.

Now, assuming that $\mu(A)=0$,  
\eqref{SLPM} implies that
$$ |\mu(A(\cdot) A(h_1(u)\cdot))|\leq C \cK_A^2 u^{-\kappa}$$
with $\cK_A =  ||A||_\bs$, thus for $ S_L(\cdot)=\frac{1}{L} \int_0^L A(h_1(u)\cdot) du$ we have
$$\mu(S_L^2)\leq C L^{-\kappa} \cK_A^2.$$
This implies that for $\kappa_0:=\kappa/3,$ we have 
\begin{equation}
\label{MES}
\mu(x \in \cM : |S_L(x)|> \cK_A L^{-\kappa_0})\leq C L^{-\kappa_0} .
\end{equation}

Next let $\hcP$ be an arbitrary partition of $M$ into $W_1$-curves of length $L$ and for $u\in [0,1]$
let 
$\hcP^u=h_1(Lu) \hcP.$ Then by \eqref{MES} 
$$ \brmu\left((x,u) \in \cM \times [0,1] : %\frac{1}{L} 
\left| \int_{\gamma(x,u)} A \right| > \cK_A L^{-\kappa_0}\right)
\leq C L^{-\kappa_0}$$
where $\gamma(x,u)$ denotes the piece of $\hcP^u$ that goes through $x$ and $\brmu$ denotes the product of $\mu$ and the Lebesgue measure on $[0,1]$.
 Thus, we can choose $u$ so that $\hcP^u$ satisfies
\begin{equation}
\label{SMix}
\mu\left(x \in \cM : %\frac{1}{L} 
\left| \int_{\gamma(x)} A \right| > \cK_A L^{-\kappa_0} \right)
\leq C L^{-\kappa_0}.
\end{equation}

If $L$ is large we can drop the constant $C$ if we let $\kappa_0$ be slightly smaller than $\kappa/3$. Likewise,  if $(k_n)$ is a finite or infinite sequence with 
$$ \sum_n \left(L e^{\lambda_u(k_n)}\right)^{-\kappa_0}\leq \eps $$
then there exists a partition $\cP$ that is representative with respect to $((k_n),A)$ as in Definition \ref{def.representative}.

To extend \eqref{SMix} to functions in $H^{\bs,\br}$ (that may have infinite $H^\bs$-norm), we use a standard approximation 
argument. Note first that (\ref{MES}) still holds for non zero mean $H^\bs$-functions if we replace $S^A_L$ by  $ {\bar S}^{A}_L(\cdot)=S^A_L-\mu(A). $

Now for $\eps>0$ let $A, A^+, A^-$ be as in Definition \ref{sr} where we assume that $\mu(A)=0$. Let $\cK_A=||A||_{\bs,\br}+1$.  Since 
$0\leq \mu(A^+)\leq \eps$, we have that 
\begin{equation}
\label{SBrS}
\mu\left(x: S_L^{A}(x) > 2 \cK_A L^{-\tilde{\kappa}}\right) \leq 
\mu\left(x: \bar{S}_L^{A^+}(x) >  2\cK_A L^{-\tilde{\kappa}} -\eps \right) . 
\end{equation}
So, if we choose $\eps$ and $\tilde{\kappa}$ such that   ${\eps} = \cK_A L^{-\tilde{\kappa}} \sim \cK_A \eps^{-\br} L^{-\kappa_0}$, that is  $\eps \sim L^{- \tilde{\kappa}}$ and $\tilde{\kappa} = \kappa_0/(r+1)$ we get from \eqref{SBrS}
using \eqref{MES} that 
 $$ \mu\left(x : S_L^A(x)> 2\cK_A L^{-\tilde{\kappa}}\right)\leq   \mu\left(x : \bar{S}_L^{A^+}(x)>||A||_{\bs, \br}
  L^{-\kappa_0}\right) \leq  L^{-\kappa_0}.$$
 Using $A^-$ to bound  $\mu\left(x : S_L^A(x)\leq -2 \cK_A L^{-\tilde{\kappa}}\right)$ we see that \eqref{SMix} and thus the rest of the proof extends to $H^{\bs, \br}$ functions, provided the exponent $\kappa_0$ is reduced. 
\end{proof}

If $\cA$ is a finite collection of functions we say that $\cP$ is representative with respect to $((k_n), \cA)$
if for each $A\in \cA,$ $\cP$ is representative with respect to $((k_n), A).$ 

 \subsection{  Mixing for approximately smooth functions.}
\S \ref{SS-EU}
controls the deviations of ergodic sums for $H^{\bs,\br}$-functions. We also need a bound on the 
rate of mixing for diagonal flows. Namely, let $A$ be a bounded $H^\bs$-function.

\begin{lemma}
\label{LmEM-sr}
There is a constant $C$ such that for any $H^{\bs,\br}$ function $B$ we have
$$  |\mu(A(\cdot) B(g \cdot))-\mu(A)\mu(B)|\leq C \left(||A||_{\bs}+||A||_{L^\infty}\right)
||B||_{\bs, \br} e^{-\frac{\kappa \max|t_j|}{r+1}}  $$
where $\kappa$ is the constant from \eqref{SLEM}.
\end{lemma}

\begin{proof}
Without the loss of generality we may assume that 
\begin{equation}
\label{BZM}
\mu(B)=0.
\end{equation}

Given $\eps$, let $B_\pm$ be the functions such that
$$ B_-\leq B\leq B_+, \quad \mu(B_+-B_-)\leq \eps \text{ and }
||B_\pm||_{\bs} \leq ||B||_{\bs, \br} \eps^{-r}. $$
Assume first that $A$ is positive. Then
\begin{equation}
\label{CorB-Bpm}
 \mu(A (\cdot) B_-(g\cdot)) \leq \mu(A (\cdot) B(g\cdot)) \leq \mu(A (\cdot) B_+(g\cdot)). 
 \end{equation}
Next, by \eqref{SLEM}
$$ |\mu(A (\cdot) B_+(g\cdot))|\leq \mu(A)\mu(B_+)+C||A||_\bs\; ||B||_{\bs, \br} \eps^{-r} e^{-\kappa \max|t_j|}. $$
Note that due to \eqref{BZM}
$$\mu(B_+)=\mu(B_+-B)\leq \mu(B_+-B_-)\leq \eps. $$
Hence
$$ |\mu(A(\cdot) B_+(g\cdot)|\leq C ||A||_\bs\; \left[\eps+||B||_{\bs, \br} \eps^{-r} e^{-\kappa \max|t_j|}\right]. $$
Choosing $\eps$ so that
$ \eps^{-(r+1)} e^{-\kappa \max|t_j|}=1$ we get
$$  |\mu(A(\cdot) B_+(g \cdot))|\leq C ||A||_\bs\; ||B||_{\bs, \br} e^{-\frac{\kappa \max|t_j|}{r+1}} . $$
Likewise
$$  |\mu(A(\cdot) B_-(g \cdot))|\leq C  ||A||_\bs \; ||B||_{\bs, \br} e^{-\frac{\kappa \max|t_j|}{r+1}} . $$
The last two inequalities together with \eqref{CorB-Bpm} prove the lemma for non negative $A.$ 

In the general case decompose
 $A=A_1-A_2$ where
$$ A_1=2||A||_{L^\infty}, \quad A_2=A_1-A.$$
Since both $A_1$ and $A_2$  are non-negative we have
$$  |\mu(A_j(\cdot) B(g \cdot))-\mu(A_j)\mu(B)|\leq C \left(||A||_\bs+||A||_{L^\infty}\right)
 ||B||_{\bs, \br} e^{-\frac{\kappa \max|t_j|}{r+1}}  $$
proving the lemma in the general case.
\end{proof}

 \section{  Poisson limit theorem for the diagonal action} \label{SSDenom}

 \subsection{  Overview of the proof of Theorem \ref{ThPLat}.} 
The goal of this section is to prove Theorem \ref{ThPLat} using the abstract Theorem \ref{th.poisson2} 
and the polynomial rate of uniform distribution of long pieces of horocycles given in Section \ref{SSSLMix}.

We fix the probability  space $(\Omega,\Prob)$ to be the space $(\cM,{\tmu})$ where  $\tmu$ is the measure from Theorem \ref{ThPLat} that is assumed to have a smooth bounded density with 
respect to the Haar measure.

In all this section, the expectation with respect to $\tmu$ of a variable $X$ will be denoted $\E(X)$.

{Let 
\begin{equation}\label{DefbPi} \bPi=\{\bt\in \R^d: \bt_j>0, \; T(\bt) \leq 1\},\end{equation} 
where $\displaystyle T(\bt)=\sum_{j=1}^d \bt_j.$ In this section we will denote by $\Pi_M=M\cP.$}
{(Note that, in fact, \eqref{DefbPi2} gives $\Pi_M=(M-d)\cP$ but we ignore "$-d$" in order to simplify
the notation. Since the result holds for arbitrary $M$, this does not cause any loss of genelity).
}
%Without any incidence on the proof, and to simplify the notations, we can assimilate $\Pi_M$ of \eqref{DefbPi2} to  $M \bPi$, which fits in the setting of Section \ref{secp}. }
Let
$$\tcP=\{\bt\in \R_+^d : T(\bt) < 10 \} \text{ so that }
\tilde\Pi_M=\{\bt\in \R_+^d : T(\bt) < 10 M \}.$$
Recall the definition of $\lambda_1(t)= \bT(\lambda)+t_1$, and let $\hat{\l}(t)= t_1+M$. 
Observe that $\hat{\l}(t)> \l_1(t)$ on $\Pi_M$
(even though $\hat{\l}(t)$ can be equal to $\l_1(t)$ on the boundary of $\Pi_M$, that is, if 
$T(\bt)=M$).

 We take $(\fX,\fm)$ and  $(\tfX,\tfm)$ to be the spaces $K=\left[-\frac{1}{\breps }, \frac{1}{\breps}\right]$
  and $\R / 2 \Z$ equipped with their normalized Lebesgue measures.

 Fix any $P \geq 1$ and divide $K$
  into a finite number of intervals $K_1, K_2\dots K_{P}$ and  let  $Q$ be the partition  of $(\fX,\fm)$ into the intervals $K_1, K_2\dots K_{P}$. 

Similarly, fix any $J \geq 1$ and divide $[0,2)$ into a finite number of intervals  $\tilde K_1,\dots, \tilde K_{J}$, and  let  $\tilde{Q}$ be the partition  of $(\tfX,\tfm)$  into the intervals $\tilde K_1,\dots, \tilde K_{J}$.

  Recall the definitions of $\Phi, \Psi$ given in \eqref{DefPhi} \eqref{DefPsi} 
  and introduce 
$\Phi_p$, $p\in [1,P]$ that are defined by  formula  \eqref{DefPhi} with $K_p$ in place of $K$. 
Let \begin{align} \label{xi}
 \Phi^\bt&=\Phi(g^\bt \cL)
 \\ \Phi^\bt_p&=\Phi_p(g^\bt\cL)  \label{def.xi} \\
\nu_\bt&=\Psi_1(g^\bt \cL) \\
\zeta_\bt&= \Psi_2\left(g^\bt \cL,N e^{-T(\bt)}\right) \label{zeta} \end{align}

Fix $R$ to be a large number (the precise conditions on $R$ are described later in this section).

By Theorem \ref{th.poisson2}, if we prove that       
$\Phi^\bt,\{\Phi^\bt_p\}_{p \leq P}, \zeta_\bt, \nu_\bt$, for $\bt\in \Pi_m$ satisfy (h1)--(h8) 
then we get the Poisson limit for  
$$\{\Psi(g^{\bt} \Lambda(\xi), N e^{-T(\bT)}) \}_{\bt \in \Pi_M, \Phi(g^{\bt} \Lambda(\xi))=1}$$ 
required in Theorem \ref{ThPLat}.

Proposition \ref{LmMultSol}, proven in \S \ref{sec.mult}, 
shows that ${\Phi^\bt}$ satisfies (h1)--(h3). The proof relies on Rogers' identities given in Lemma \ref{LmRI}.
\S \ref{sec.norms} contains estimates of $||\cdot||_{\bs, \bs}$ norms of the functions $\Phi_i$ and $\Psi_j.$ 
Then in \S \ref{sec.partition} we show, using Proposition \ref{mixing},
the existence of  the partitions $F_\bt$ and the set $E$ %and $\tE$ 
such that (h4)--(h8) hold. %This is the content of Proposition \ref{proph4h8}. 

 \subsection{  Multiple solutions.}  \label{sec.mult}

The following proposition asserts that ${\Phi^\bt}$ satisfies (h1)--(h3). 
\begin{prop}  %Let $\Phi$ be defined as in (\ref{DefPhi}). 
Recall the notations $\Phi^\bt= \Phi \circ g^\bt$, $\Phi_p^\bt= \Phi_p \circ g^\bt$.
\label{LmMultSol} 
Then uniformly in $ \bt\in \Z^d, \; \bt' \in \Z^d-\{\mathbf{0}\}$ we have
\begin{align*}
&(a) \quad \quad \quad {\tmu}(\Phi^\bt)= \cO( M^{-d}); \\
  &(a') \ \  \quad \quad {\tmu}(\Phi^\bt_p )=2^{d-1} \bc_1  |K_p|  M^{-d} +\cO(M^{-100d}) \\
 &  \text{provided that} \ \min_j (\bt_j) \geq R\ln M  \text{ and } R \text{ is sufficiently large;} \\
 &(b) \quad \quad \quad \tmu\left(\left(\Phi^\bt\right)^2-\Phi^\bt\right)=
 \cO\left(M^{-2d}\right);   \\
 &(c) \quad \quad \quad  \tmu( \{\cL \in \cM : \Phi^\bt (\cL)\neq 0 \text{ and } \Phi^\bt(g_{\bt'}\cL)\neq 0 \})=\cO\left(M^{-2d}\right). 
\end{align*}
\end{prop}
Note that (b) implies via Markov inequality that
\begin{equation}
\tmu( \{\cL \in \cM : \Phi^\bt(\cL)> 1\}) =\cO\left(M^{-2d}\right).
\end{equation}

\begin{proof} Without loss of generality, we can assume in the proof of the inequalities (a), (b), (c), that $\cL$ is distributed according to the Haar measure on $M$, and by invariance of the Haar measure take $t=0$. The inequalities then follow from Rogers' 
equalities of Lemma \ref{LmRI}. Namely, part (a)  of Lemma \ref{LmRI} implies  that 
$\mu(\Phi )=\bc_1 \Vol(\cD)=2^{d-1} \bc_1 |K| M^{-d}$, where $\cD$ is the set defined by \eqref{DefKeySet}.  
Indeed
%$$\int_{\R^{d+1}} \mathbbm{1}_I(x_1(\vv)) 
%\left( \prod_{j=2}^d \mathbbm{1}_J(x_j(\vv)) \right) \mathbbm{1}_K(M^d \Pi(\vv) )
% dx dz $$
$$\Vol(\cD)=\frac{|K|}{M^d} \int_{\R^d}  \mathbbm{1}_I(x_1(\vv)) 
\left( \prod_{j=2}^d \mathbbm{1}_J(x_j(\vv)) \right) \frac{dx}{\prod_{j=1}^d |x_j|}
= 2^{d-1} |K|M^{-d}. $$  
On the other hand, letting $f=\one_\cD$
%if for $\vv=(x,z) \in \cL$, we let 
%$$f(\vv)= \mathbbm{1}_I(x_1(\vv)) 
%\left( \prod_{j=2}^d \mathbbm{1}_J(x_j(\vv)) \right) \mathbbm{1}_K(M^d \Pi(\vv) ), $$
we get,
since $I$ is an interval of positive numbers, that  
\begin{equation} 
\label{PhiSq-Phi}
\Phi^2(\cL) -\Phi(\cL) =    \sum_{\vv_1\neq \vv_2\in L \text{ prime}} f(\vv_1) f(\vv_2)  
=\sum_{\vv_1\neq \pm \vv_2\in L \text{ prime}} f(\vv_1) f(\vv_2) 
\end{equation}
so part (b) follows by Lemma \ref{LmRI} (b).
%The second estimate follows from the first 
%the expectation of the above expression is $\cO(M^{-4})$
%so $(b)$ follows 
%by Markov inequality. 
As for (c) observe that if we define, for $\vv=(x,z) \in \cL$, 
$$\tf(\vv)= \mathbbm{1}_{e^{-\bt_1'}I}(x_1(\vv)) 
\left( \prod_{j=2}^d \mathbbm{1}_{e^{-\bt_j'} J} (x_j(\vv)) \right) \mathbbm{1}_{e^{T(\bt')}K} (M^d \Pi(\vv) ), $$
%$$\tf(\vv)=\mathbbm{1}_{e^{-t'_1} I \times  e^{-t'_2} J \times e^{t'_1+t'_2} K}(x,y,M^2xyz),$$ then 
%\leq C \int_\cM \Phi(\cL) \Phi(g_{t} \cL)) d\mu(\cL)$$
$$  \mu(\Phi \Phi^{\bt'})=\int_{\cM} \sum_{\vv_2\neq \pm \vv_1\in \cL \text{ prime}} f(\vv_1) \tf(\vv_2)
 d\mu(\cL) $$
where the contribution of $\vv_2=-\vv_1$ vanishes because both  $I$ and $e^{-t'_1}I$ are positive intervals, while the contribution of $\vv_2=\vv_1$ vanishes since either 
$I$ and $e^{-\bt'_1} I$ are disjoint or $J$ and $e^{-\bt'_j} J$ for some $j=2, \dots d$
 are disjoint. Applying Lemma \ref{LmRI}(b) we get $(c)$.  
 
 Since % in the case $\cL$ is distributed according to the Haar measure we have that 
 $\mu(\Phi_p )=2^{d-1} \bc_1 |K_p| M^{-d}$, ($a'$) follows by exponential mixing of the geodesic flow
 (Lemma \ref{LmEM-sr}) and Lemma \ref{LmNorms} from \S ~\ref{sec.norms}.
\end{proof}

 \subsection{  Estimates of norms.} \label{sec.norms}
Before we construct the partition $\cF_\bt$, we first  state  estimates on the $H^{\bs,\bs}$ norms of $\Phi$ and $\Phi_p$. We also obtain an estimate on the norm of $\Phi^2$ after making an appropriate cutoff. The results stated in Lemma \ref{LmNorms} below
 are proven in \S \ref{AppNS}.

Let $\fh_{1, \delta}$ be a smooth cutoff function supported on the set of lattices with 
a short vector of size $\cO(\delta).$ The existence of such function is guaranteed by Lemma
\ref{LmCutOff} from Appendix \ref{AppNorms}. We set $\fh_{2, \delta}=1-\fh_{1,\delta}.$

Let $\hK=\{z: d(z, \partial K)\leq M^{-1000d}\}$ and set $\hPhi=\cS(1_{\hat\cD})$ where $\hat\cD$ is defined similarly to $\cD$
(see equation \eqref{DefKeySet}) with $K$ replaced by $\hK.$
%$$\hat{f}(\vv)= \mathbbm{1}_I(x_1(\vv)) 
%\left( \prod_{j=2}^d \mathbbm{1}_J(x_j(\vv)) \right) \mathbbm{1}_\hK(M^d \Pi(\vv) ).$$
Define similarly $\hK_p,$ $\cD_p,$ and $\hPhi_p$ for $p\in [1,P]$.

\begin{lemma} For any $s \geq 0$ we have that 
\label{LmNorms}

(a) $\displaystyle{||\Phi||_{\bs, \bs}=\cO(1)},$ $\displaystyle{||\hPhi||_{\bs, \bs}=\cO(1)}.$
Also for each $p\in [1, P]$ 
$$||\Phi_p||_{\bs, \bs}=\cO(1),\quad ||\hPhi_p||_{\bs, \bs}=\cO(1).$$
 
(b) For each $\delta>0,$  $\displaystyle{||\Phi \fh_{1, \delta}||_{\bs, \bs}=\cO(1)}.$

(c) For each $\delta>0,$  $\displaystyle{||(\Phi^2-\Phi) \fh_{2, \delta}||_{\bs, \bs}=
\cO(\delta^{-2(d+1)})}.$
  
(d) $\displaystyle{  \mu(\Phi_i)=2^{d-1}  \bc_1 |K_i|, \quad  \mu(\hPhi_i)=\cO(M^{-1000d}). }$

(e) $\displaystyle \mu(\Phi \fh_{1, \delta})=\cO\left(\delta^{(d+1)/2}\right). $

(f) $\displaystyle \mu((\Phi^2-\Phi) \fh_{2, \delta})=\cO\left(M^{-2d}\right).$
\end{lemma}

 \subsection{  The partition $\cF_t$ and the proof of (h4)--(h8)} \label{sec.partition}

Given $\bt\in {\tilde\Pi}$ we denote by $\Pi^+(\bt) $ the set of $\bar\bt\in {\tilde\Pi}$ such that $ \lambda_1(\bar\bt)>\lambda_1(\bt)+R\ln M.$

Consider the following collection of functions
$$ \bPhi=\{\Phi, \Phi_1\dots \Phi_P, 
\hPhi,\hPhi_1,\ldots,\hPhi_P, \Phi \fh_{1, M^{-1000d}}, (\Phi^2-\Phi) \fh_{2, M^{-1000d}} \}. $$

Let  $F_\bt$ be a partition of $\cM$ into $W_1$-curves of  size 
$L_\bt=(e^{\lambda_1(t)} M^{1000d})^{-1}$, which is 
$\kappa_0$-representative 
with respect to $(\Pi^+(\bt) , \bPhi)$ (that is, representative  for all $\bar\bt \in \Pi^+(\bt)$). 
%This  is possible to do 
Such a partition exists
due to Proposition \ref{mixing}, Lemma \ref{LmNorms}  and the fact  that  
$$\sum_{ \bt\in \Pi,\;\;\bar{t} \in \Pi^+(\bt)} \left(L_\bt e^{\lambda_1(\bar\bt)}\right)^{-\kappa_0}=
\cO\left(M^{-{10^{100d}}}\right)$$ 
if $R$ is sufficiently large.
Moreover, if we let $E_1$ be the set 
of $\cL$ such that for any $\bt\in \Pi,$ %$\bar{t} \in \Pi^+(\bt),$
$\cL$ 
is representative with respect to $(\cF_\bt, \Pi^+(\bt), \bPhi)$ then 
$$\mu(E_1^c)=\cO(M^{-100d}).$$

\begin{prop} \label{proph4h8} There exist  sets $E$ with 
$${\tmu}(E^c) \ll    M^{-100d}, $$
such that the variables ${\Phi^\bt},\eta_t,\Phi^\bt_p,\eta_{\bt,p}$ and the partitions $F_\bt$ satisfy the properties (h4)--(h8) of Section 
 \ref{sec.poisson.abstract}.
\end{prop}

\begin{proof}

\noindent {\it Property (h4).}  We prove that any $\cL\in E_1$ satisfies (h4)
(with $\omega$ replaced by $\cL$).

Properties (h4a) and (h4b) follow from parts (a) and $(a')$ of 
Proposition~\ref{LmMultSol} and the definition of representative points.

To check (h4c) note that $\Phi^\bt_p$ are integer valued and so
$$ \Phi^\bt_p-\eta_{\bt,p}\leq {\Phi^\bt_p}^2-\Phi^\bt_p\leq {\Phi^\bt}^2-{\Phi^\bt}. $$
Since also $0\leq \xi_{\bt, p}-\eta_{\bt,p} \leq \Phi^\bt_p\leq {\Phi^\bt}$ we get
$$ 0\leq \Phi^\bt_p-\eta_{\bt,p}\leq \hxi_\bt $$
where
$$ \hxi_\bt=\left[(\Phi^2-\Phi) \fh_{2, M^{-1000d}}+\Phi \fh_{1, M^{-1000d}}\right]\circ g^\bt. $$
Accordingly for $\cL\in E_1$ 
$$ 0\leq \E(\Phi^\bt_p-\eta_{\bt,p}|\cF_{\bt'})\leq \E(\hxi_{\bt}|\cF_{\bt'})\leq \frac{C}{M^{2d}}$$
where the last inequality relies on parts (e) and (f) of Lemma \ref{LmNorms} and the fact that $\cL$ is representative with respect
to $(F_{\bt'}, \bt, \bPhi).$ The last display implies that
$$ \E(\Phi^\bt_p|\cF_{\bt'})-\frac{C}{M^{2d}}\leq \E(\eta_{\bt,p}|\cF_{\bt'})\leq \E(\Phi^\bt_p|\cF_{\bt'}) .$$
Hence (h4c) follows from (h4b).
\medskip $ \ $ 

\noindent {\it Property (h5).} %We now turn to the proof of (h5). 
{
Consider $\bt \in \Pi,$ $\bar\bt \in \Pi^+(\bt)$, and $\gamma_{\bar\bt} \in F_{\bar\bt}.$ 
%it holds that  $g^\bt \gamma_{\bar\bt}$ has length less than $M^{-10^9\;d}.$ 
If ${\Phi^\bt}\equiv 0$ on $\gamma_{\bar\bt}$ then (h5) clearly holds on $\gamma_{\bar\bt}.$
On the other hand suppose that there exists $\bar{\cL}\in \gamma_{\bar\bt}$ and $p$ such that
$\xi_{p, \bt}(\bar\cL)\neq 0.$ If follows that $g^\bt(\bar\cL)$ contains a vector 
$(\bar x, \bar z)\in I\times J^{d-1}\times K_p.$ Note that
$$ g^{\bt} \gamma=\{h_\tau g^\bt(\bar\cL)\}_{\tau\in \cI} $$
where $\cI$ is an interval containing zero of length which less than
$M^{-10^9\;d}.$ Observe that $h_\tau g^\bt(\bar\cL)$ contains a vector 
$(x_\tau, z_\tau)$ where $x_\tau=\bar x,$ $z_\tau=\bar z+\tau x_1.$
In particular $|z_\tau-z|<10^{-1000 d}.$
Hence if $g^\bt \gamma_{\brt}$ does not intersect  
$$\hat\Ka_p=\{z: d(z, \partial \Ka_p)\leq M^{-10000d}\}$$ then it is completely contained in 
$\Ka_p.$ %or $\Ka_p^c$. 
The measure of $\cL$ such that  $g^\bt \gamma_{\bar\bt}$ intersects $\hat\Ka_p$ for some $p$ is thus bounded by 
$\cO(M^{-1000d})$ from Lemma \ref{LmNorms}(d). 
Taking the complement to the union of all these exceptional $\cL$ for all $\bt \in \Pi, \bar\bt \in \Pi^+(\bt)$ we get a set $E_2$ such that $\tmu(E_2^c)=\cO(M^{-999d})$ and
(h5) holds for $\cL\in E_2.$}
\medskip $ \ $

\noindent {\it Property (h6).} Since the size of the pieces of $F_\bt$ is $L_\bt=(e^{\lambda_1(\bt)} M^{1000d})^{-1}$ 
and the size of the pieces of $F_{\bar\bt}$ is 
$\displaystyle L_{\bar\bt}=\left(e^{\lambda_1(\bar\bt)} M^{1000d}\right)^{-1}$ 
if we let
$$ E_3=\{\cL: F_{\bar\bt}(\cL)\subset F_\bt(\cL) \text{ for all } \bt\in\Pi, \bar\bt\in \Pi^+(\bt)\}$$
then we have $\tmu(E_3^c)=\cO(M^{-100d}).$
\medskip $ \ $
%, we can assume up to discarding a set of measure smaller than $M^{-1000}$ that for any $t$ and any $\brt \in \Pi$ with  $\lambda_1(\brt)>\lambda_1(t)+R\ln M$,  it holds that  $\cF_\brt(\omega)\subset \cF_t(\omega)$.

We let $E_4:=E_1 \cap E_2\cap E_3$ and observe that $\tmu(E_4^c)=O(M^{-100d})$ 
and (h4), (h5), and (h6) hold on $E_4$.  
\medskip $ \ $ 

\noindent {\it Property (h7).}  
Let $\tKa_1,\ldots, \tKa_J$ be a partition of $\R / 2\Z$ with $J$ intervals. 
We will show that $\cL\in E_4$ satisfies (h7). Then we further refine $E_4$ to ensure (h8). 

Assume $\bt,\bt'$ are such that 
$ \hat{\l}(\bt) \geq \lambda_1(\bt')+R \ln M \geq \lambda_1(\bt)+2R \ln M$.  
We need to show that if $\cL\in E$ is such that ${\Phi^\bt}=\Phi^\bt_p=1$ then for any $j \in [1,J]$

\begin{equation} \label{equid}  \tmu\left(   \Psi_2\left( g^\bt \cL,\frac{N}{e^{T(\bt)}}\right)\in \tKa_j
   \big|\cF_{\bt'}   \right)(\cL) =|\tKa_j| (1+o(1)). \end{equation}
 Let $\gamma_{\bt'}=F_{t'}(\cL).$ Then $\gamma_{\bt'}$ is of the form
$$\gamma_{\bt'}=\{h_\tau^1 \brcL\}_{0\leq \tau \leq (e^{\lambda_1(\bt')} M^{1000d})^{-1}}$$
for some $\brcL\in \cM.$ 
%be an element of $\cF_{t'}  \cap \tilde{E} \cap \{\xi_{t} =\xi_{t,p} =1\}$ (for some choice of $p\leq P$) on which by (h4) we have that $\Phi (g_t \cdot )=\Phi_p (g_t \cdot )=1$. 
%Fix an extremity $\cL$ of $\gamma_{t'}$ and denote the points of 
%$\gamma_{t'}$ by $\tcL = h^1_\tau \cL$ with  $0\leq \tau \leq (e^{\lambda_1(t')} M^{1000})^{-1}$. 
By property (h5) $\Phi^\bt_p=1$ on $\gamma_{\bt'}.$ 
In particular, $\Phi_p(g^\bt\brcL)=1,$ that is  $g^\bt  \brcL$ contains a vector 
$(x,z)\in \cD_p.$ 
%such that $$(x,\Pi(x, z))\in I\times J^{d-1} \times \Ka_p.$$
Since
%Moreover by the definition of $\tilde{E}$ we know that $xyz$ is $M^{-1000}$ far from the boundary of $K_p$. 
%Then, due to the fact that 
$g^\bt h_\tau = h_{e^{\l_1(\bt)} \tau} g^\bt$ it follows that
$g^\bt h_\tau \brcL$
contains the vector $(x_\tau,z_\tau)\in \cD_p.$ 
%$$(x_\tau, \Pi(x_\tau, z_\tau ))\in I\times J^{d-1} \times \Ka_p,\quad\text{namely,}\quad 
Namely, $x_\tau=x,$  $z_\tau=z+e^{\lambda_1(\bt)} \tau x_1 . $
%being exactly $(x,y,z+e^{\l_1(t)} \tau x)$. 
 Now
\begin{equation} 
\label{HoroImage}
\Psi_2\left( g^\bt h^1_\tau \brcL, \frac{N}{e^{T(\bt)}}  \right)= 
\frac{N}{e^{T(\bt)}} z_\tau \text{ mod } 2
= \frac{N}{e^{T(\bt)}} z + e^{\hat{\l}(\bt)} \tau x_1 \text{ mod } 2. 
\end{equation}
and since $\tau$ varies on an interval of length $ (e^{\lambda_1(\bt')} M^{1000})^{-1}$, 
the uniform distribution (\ref{equid}) follows from the fact that  
$ \hat{\l}(\bt) \geq \lambda_1(\bt')+R \ln M$ 
provided that $R$ is sufficiently large.
\medskip $ \ $ 

\noindent {\it Property (h8).}  $F_{\bar\bt}(\cL)$ is of the form 
$$ \{h_\tau^1 \brcL\}_{0\leq \tau \leq (e^{\lambda_1(\bar\bt)} M^{1000})^{-1}}$$
for some $\brcL\in \cM.$ By (h5), ${\Phi^\bt}=1$ on $F_{\bar\bt}(\omega).$
%The computation from property (h7)
%gives
%$$ \Psi_2\left(g_t h^1_\tau \brcL, \frac{N}{e^{t_1+t_2}}  \right)=
% \frac{N}{e^{t_1+t_2}} z + e^{\hat{\l}(t)} \tau x \text{ mod } (2). $$
\eqref{HoroImage} shows that if
$\mathbbm{1}_{\zeta_\bt\in \tK_j}$ is not constant on $F_{\bar\bt}(\cL)$ then
$\Psi_2\left( g^\bt h^1_\tau \brcL, \frac{N}{e^{T(\bt)}}  \right)$ lies in a $\cO(M^{-1000d})$ neighborhood 
of $\partial \tKa_j.$ Let

\begin{multline*}  E=\left\{\cL\in E_4 : \forall  \bt, \bar\bt \in \Pi \text{ with }  \lambda_1(\bar\bt)\geq \hlambda(\bt)+R\ln M \text{ it holds that :}  \right. \\  
\left.  \forall j\in [1,J], \quad \mathbbm{1}_{\zeta_\bt\in \tKa_j} \text{ is constant on } F_{\bar\bt}(\cL)\right\} .\end{multline*} 
Then (h8) holds on $E.$ On the other hand,
the same argument as for property (h5) shows that
$\tmu(E-E_4)$ $=\cO(M^{-999d})$ as needed.
\medskip $ \ $ 

We have thus checked (h4)--(h8) for     ${\Phi^\bt},\{\Phi^\bt_p\}, \{\zeta_\bt\}$ for ${\bt\in  \Pi}$ and $p\in [1,P]$, which completes the proof of Proposition \ref{proph4h8}.  \end{proof}

\begin{proof}
[Proof of Theorem \ref{ThPLat}.]  Theorem \ref{ThPLat}(a) is exactly property (h4a) that we proved in Proposition \ref{proph4h8}.

{To prove Theorem \ref{ThPLat}(b), we note that it
follows
from Theorem \ref{th.poisson2} and from properties (h1)--(h8) that we proved in Propositions \ref{LmMultSol}  and  \ref{proph4h8}, that the process 
$$\left\{\left(\Psi\left(g^\bt \cL, N e^{-\sum_{j=1}^d \bt_j} \right), \frac{\bt}{M} \right) 
\right\}_{\Phi(g^\bt \cL)=1,\; \bt \in \Pi_M}$$
converges in probability, as $N\to\infty$, to a Poisson process  on 
$$ \left[-\frac{1}{\breps},\frac{1}{\breps}\right] \times   \R/(2\Z) \times \cP $$ with
intensity $2^{d-1} \bc_1 .$ Since $\hat\cP=\frac{1}{d!}$
the claim of Theorem \ref{ThPLat}(b) follows from the invariance of Poisson processes by projection
given by Lemma \ref{LmPT}(b). 
}
\end{proof}

 \section{  Small boxes}
\label{ScSmall}
One can also consider the visits to small boxes 
$\cC_N=\prod_j \left[-\frac{u_j}{N^\gamma}, \frac{u_j}{N^\gamma}\right].$ 
The case $\gamma=0$ is treated in Theorem \ref{dimd} while the case $\gamma=1/d$
was studied in \cite{Mar1}. For $\gamma>1/d$ most orbits do not visit $\cC_N$ so we
consider the remaining case $0<\gamma<\frac{1}{d}.$ Recall \eqref{DefBRho}.

\begin{theo}
\label{ThSmBox}
Under the assumptions of Theorem \ref{dimd}, $\frac{D(x, \a,{\cC_N},N)}{\brho ((1-d\gamma)\ln N)^d}$
converges to  the standard Cauchy distribution.
\end{theo}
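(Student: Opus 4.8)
The plan is to mimic the proof of Theorem~\ref{dimd} essentially verbatim, tracking how the exponent $\gamma$ enters the harmonic analysis. First I would write the Fourier series of $\chi_{\cC_N}$ exactly as in Section~\ref{sec3}: the only change is that the argument $u_i$ of $\Phi_{\bar k_i}$ is replaced by $u_i/N^\gamma$, so that $|\Phi_{\bar k_i}(u_i/N^\gamma)|\leq \min(2\pi |\bar k_i|/N^\gamma, 1/|\bar k_i|)$. This means a frequency $k$ contributes non-negligibly to the discrepancy only when $|\bar k_i|\gtrsim N^\gamma$ for all $i$, i.e. the effective range of summation is $N^\gamma\lesssim |\bar k_i|\lesssim N$ in each coordinate (the upper bound $|\bar k_i|\lesssim N$ surviving from the same $L^2$ argument as in the proof that $\DOO$ may be replaced by $\holako_1$, since $\int_{\T^d}(\sin(\pi N(k,\a))/\sin(\pi(k,\a)))^2 d\a\leq N$).

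Next I would rerun the cascade of reductions $\holako_1\rightsquigarrow\holako_2\rightsquigarrow\dots\rightsquigarrow\holako_7$. The constraint $|\bar k_i|\geq N^\gamma$ replaces $|\bar k_i|\geq 1$ throughout; consequently $|\Pi_i \bar k_i|$ ranges in $[N^{d\gamma}, N^d]$ and the relevant resonances satisfy $|\Pi_i\bar k_i|\,\norm{(k,\a)}\leq (\e\,\psi(N)^d)^{-1}$ for the appropriate normalizing function $\psi(N)$. The point is that after passing to the product set $\bar S_N=\{|\Pi_i \bar k_i|\leq N\}$ — note that with $|\bar k_i|\geq N^\gamma$ the condition $|\Pi_i\bar k_i|\leq N$ already forces $|\bar k_i|\leq N^{1-(d-1)\gamma}$, so the box is effectively of size $N^{1-(d-1)\gamma}$ in each variable but one still has $d$ free logarithmic scales — the counting estimates $\#A_s$, $\#P_s$ pick up powers of $(\ln N)^{d-1}$ exactly as before, and the self-similar structure is untouched. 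The upshot is that the good normalization becomes $((1-d\gamma)\ln N)^d$: this is because, after substituting $L_i=\bar k_i$ and passing to the lattice picture of Section~\ref{sec.reduction}, the variables $t_i=\ln|L_i|$ now range over $[\gamma\ln N, \ln N]$, and the relevant simplex $\{t_i\geq \gamma\ln N,\ \sum t_i\leq \ln N\}$ has, after the affine change $t_i\mapsto t_i-\gamma\ln N$, exactly the shape $\{s_i>0,\ \sum s_i<(1-d\gamma)\ln N\}$ — i.e. $M$ is replaced by $(1-d\gamma)M$, equivalently $\delta$ is shifted. Running Theorem~\ref{ThPLat} (whose proof is insensitive to which region of scales one sits in, since it only uses the polynomial equidistribution of Proposition~\ref{mixing}) on this rescaled simplex gives a Poisson process with the same intensity up to the volume factor $(1-d\gamma)^d$ in place of $(1-\delta)^d$.

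Concretely, the analogue of Theorem~\ref{ThPoisResd} is that the point process built from the resonant frequencies for $\cC_N$, normalized by $((1-d\gamma)\ln N)^d$, converges to the Poisson process on $[-\tfrac1\e,\tfrac1\e]\times \R/(2\Z)\times\T^{d+1}$ with intensity $2^{d-1}(1-\delta)^d \bc_1/d!$; taking $\e,\delta\to0$ and feeding this through Lemma~\ref{LmPoissonCauchy} exactly as in the proof of Theorem~\ref{dimd} yields convergence of $D_{\cC_N}(x,\a,N)/(\brho((1-d\gamma)\ln N)^d)$ to the standard Cauchy law, with the \emph{same} constant $\brho=\tfrac1{d!}(\tfrac2\pi)^{2d+2}$, since $\E(\Gamma)$ and the combinatorial constant are unchanged — only the number of available logarithmic scales shrinks.

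The main obstacle I anticipate is bookkeeping at the two ends of the scale range rather than any genuinely new idea: one must check that the truncations which in Theorem~\ref{dimd} removed the frequencies with some $|\bar k_i|<1$ now correctly remove those with $|\bar k_i|<N^\gamma$ without losing the main term, and — more delicately — that in the lattice reformulation the lower cutoff $t_i\geq\gamma M$ is compatible with the splitness and independence arguments of Section~\ref{sec5} (in particular, one needs the $\sqrt M$-splitness of the hitting times and the type-C equidistribution argument to survive the restriction to the smaller simplex; this is fine because $(1-d\gamma)M$ still tends to infinity and all estimates there were polynomial in $M$, but it requires care that the exceptional sets, which had measure $\cO(\ln M/\sqrt M)$, are still summable over the now-$O(M)$ strips). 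I would also double-check the boundary behaviour near $t_i=\gamma M$: a frequency with $|\bar k_i|$ exactly of order $N^\gamma$ has $\Phi_{\bar k_i}(u_i/N^\gamma)$ neither negligible nor of the clean form used for $V_k$, so a dyadic-scale argument is needed to show this transition layer contributes $o((\ln N)^d)$ in $L^2$, analogous to (but slightly more involved than) the claim that $\holako_2$ may replace $\holako_1$.
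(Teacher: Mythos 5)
Your proposal follows the paper's own (very brief) proof exactly: the paper likewise reduces Theorem \ref{ThSmBox} to Theorem \ref{dimd} by discarding the frequencies with $|\brk_j|<N^\gamma$ for some $j$ (referring back to Section \ref{nosmallk}) and replacing $\bPi$ by $\bPi_\gamma=\{t:\ t_j>\gamma M,\ \sum_j t_j<M\}$, which shrinks the intensity of the limiting Poisson process by precisely the volume factor $(1-d\gamma)^d$ you identify, so that the normalization $((1-d\gamma)\ln N)^d$ with the unchanged constant $\brho$ gives the standard Cauchy law. (One cosmetic slip: the coefficient bound should read $|\Phi_{\brk_i}(u_i/N^\gamma)|\leq\min(2\pi u_i N^{-\gamma},\,1/|\brk_i|)$ — the first term does not contain $|\brk_i|$ — but the crossover at $|\brk_i|\sim N^\gamma$, and hence your conclusion, is unaffected.)
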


The proof of Theorem \ref{ThSmBox} is very similar to the proof of Theorem \ref{dimd}
so we just describe the necessary changes.

Similarly to Theorem \ref{dimd} the proof consists of two parts: showing that non-resonant
terms are negligible and establishing the Poisson limit for the resonant terms.

To describe the first part let
$$ Z(\xia, N, \gamma)=\{k\in Z(\xia, N): |\brk_i|>N^\gamma\}, $$
$$ \bar{\holako}(\xia, N, \gamma)=\sum_{k\in Z(\xia, N, \gamma)}
\frac{\Gamma_k(\xia, N, \gamma)}{\Omega_k(\xia, N)} $$
where $\brk_i$ are defined by \eqref{sidedual},
$$ \Gamma_k(\xia, N, \gamma)=\frac{2A}{\pi}
\phi(\brk_1 u_1 N^{-\gamma}, \dots, \brk_d u_d N^{-\gamma},
N(k,\alpha), \langle k,x \rangle+\varphi_{k, \alpha, N}) $$
and $\Omega_k$ and $\phi$ are defined by \eqref{DefTheta} and \eqref{DefGamma}
respectively. We have the following analogue of Proposition \ref{prop.reduction}. 

\begin{prop} \label{prop.reduction.small}
  For any $\upsilon>0$, if we take $\breps>0$ sufficiently small and
  $N$ sufficiently large we have that 
  \begin{equation*} %\label{conv.dist} 
  \lambda\left( \left\{ \xia \in X :
    \left| \frac{\bar{\holako}(\xia,N,\gamma)}{(\ln N)^d} -
    \frac{D(x, \a,{\cC_N},N)}{(\ln N)^d} \right|  \geq \upsilon \right\} \right)\leq \upsilon . 
    \end{equation*}
\end{prop}

Proposition \ref{prop.reduction.small} 
follows from the estimates of Section \ref{sec3} with the 
exception that \S \ref{nosmallk} has to be modified to take into account that now we remove more frequencies
(namely, we now discuard the frequncies with $|\brk_i|<N^\gamma$).

Define $U_k(\xia, N, \gamma)$ similarly to $U_k(\xia, N)$ with $u_j$ replaced by $u_j N^{-\gamma}$
and let
$$ \holako_1(\xia, N, \gamma)
=\sum_{0<|\brk_i|<N} U_k(\xia, N, \gamma), 
\quad
 \holako_2(\xia, N, \gamma)=\sum_{N^\gamma<|\brk_i|<N} U_k(\xia, N, \gamma). $$
We claim that
\begin{equation}
\label{D12gamma}
\frac{\holako_1-\holako_2}{(\ln N)^d} \text{ converges to }0 \text{ in probability as } N\to\infty.
\end{equation}
To this end  fix a small $\teps>0$ and let
$$ \tY(q_1, \dots q_d)=\{k\in \Z^d: |\brk_j|\in [q_j, q_j+1]\}, \quad
Q(q_1, \dots q_d)=\prod_{j=1}^d \max(q_j, N^\gamma),   
$$
 $$ \cQ=\{(q_1,\dots, q_d)\in \Z^d: \min(|q_j|)<N^\gamma, \max(|q_j|)<2N\}, $$
$$\tE=\{\a: \exists (q_1,\dots , q_d)\in \cQ\text{ and }k\in \tY(q_1,\dots q_d): 
||\langle k,\a \rangle||\leq \teps Q(q_1, \dots q_d)\}. 
$$
Then 
\begin{equation}
\label{TESmall}
\mes(\tE)\leq \teps.
\end{equation}
On the other hand, since for $k\in \tY(q_1,\dots q_d)$ we
have
$$ |U_k(\xia, N, \gamma)|\leq \frac{|\cos(2\pi\langle k,x \rangle+\phi_{k,\alpha, N})|}
{Q(q_1, \dots q_d)||\langle k,\a \rangle||} $$
we get, repeating the arguments of Subsection \ref{nosmallk}, that 
\begin{equation}
\label{L2TEComp}
 ||\holako_2-\holako||_{L^2((\T^d-\tE)\times \T^d)}\leq C \frac{(\ln N)^{d-1}}{\sqrt{\teps}}. 
\end{equation}
Combining \eqref{TESmall} and \eqref{L2TEComp} we obtain \eqref{D12gamma}. 
Combining \eqref{D12gamma} with the estimates of Section \ref{sec3} we obtain 
Proposition \ref{prop.reduction.small}.

Next, Theorem \ref{ThPoisResd}  has to be modified as follows.

\begin{prop} \label{ThPoisResSmall} 
Assume $\xia \in X$ is distributed according to the normalized Lebesgue measure $\lambda$. For any $\bar{\eps}>0$, as $N \to \infty$, the process
$$\left\{\left((\ln N)^d  \left(\prod_i \bar{k}_i\right)  \norm{\langle k,\a \rangle}, \left(N\langle k,\a \rangle \text{ mod } 2\right) , 
\{ \bar{k}_1 u_1 N^{-\gamma} \},\ldots,\{ \bar{k}_d u_d N^{-\gamma} \}, \right. \right.$$
$$\left. \left.  \{\langle k,x \rangle\}\right)\right\}_{k \in Z(\xia, N,\gamma)} $$
converges to a Poisson process on $[- \frac{1}{\breps}, \frac{1}{\breps}]\times (\R/ (2\Z)) \times \T^{d+1}$ with intensity 
$2^{d-1} \bc_1 (1-\gamma d)^d /d!$ where $\bc_1=1/\zeta(d+1)$ is the constant from Lemma~\ref{LmRI}. 
\end{prop}

The Poisson limit for the pair 
\begin{equation}
\label{FirstTwo}
\left\{(\ln N)^d  \left(\prod_i \bar{k}_i\right)  \norm{\langle k,\a \rangle}, \left( N\langle k,\a \rangle \text{ mod } 2\right) \right\} 
\end{equation}
follows from the abstract Theorem \ref{th.poisson2} with
$\bPi=\{t: t_j>0, \;\sum_j t_j<M\}$ 
replaced by
$\bPi_\gamma=\{t: t_j>\gamma M,\; \sum_j t_j<M\}. $ Note that
the change of variables $\brt_j=t_j-\gamma M$ transforms $\bPi_\gamma$ to the simplex
$$\{\brt: \brt_j>0, \; \sum_j \brt_j<(1-\gamma d) M\}$$ so that 
$\Vol(\bPi_\gamma)=(1-d\gamma)^d \; \Vol(\bPi). $ This explains the extra factor $(1-\gamma d)^d$ 
in Theorem \ref{ThSmBox}.

The asymptotic independence of the remaining coordinates in Proposition \ref{ThPoisResSmall}  from the 
pair \eqref{FirstTwo} is a consequence of the following modification of Propoition \ref{ThLacun}.

Given $s\in\N$ consider a sequence of $s$-tuples 
$(k^{(1,N)} , \dots k^{(s,N)})$ where $k^{(j,N)}\in \Z^d.$
Let $\brk^{(j, N)}$ be vector with components
$$ \brk^{(j, N)}_p=a_{p,1} k^{(j,N)}_1+\dots+a_{p,d} k^{(j,N)}_d. $$

\begin{prop}
\label{ThLacunGamma} 
Suppose that
\begin{equation}
\label{1Split}
\forall j\in\{1,\dots, s\} \quad |\brk^{(j,N)}|>N^\gamma e^{\sqrt{\ln N}}, 
\end{equation}
and
\begin{equation}
\label{2Split}
\forall j'\neq j''\in\{1,\dots, s\} \quad \left|\ln|\brk^{(j',N)}|-\ln|\brk^{(j'',N)}|\right|>
e^{\sqrt{\ln N}} 
\end{equation}
Let $(x,u)$ be distributed according
to a density $\rho_N$ on $\T^d\times T^d$ such that 
\begin{equation}
\label{SmoothPhaseD}
||\rho_N||_{C^1}\leq R.
\end{equation}
Then the distribution of the $s$ $(d+1)$-tuples
$$\left( \{(\brk^{(1,N)}, u_1 N^{-\gamma})\},\dots \{(\brk^{(1,N)}, u_d N^{-\gamma})\},
\{(k^{(1,N)}, x)\}\right.,$$
$$\dots$$
$$\left.\{(\brk^{(d,N)}, u_1 N^{-\gamma})\},\dots \{(\brk^{(d,N)}, u_d N^{-\gamma})\},
\{(k^{(d,N)}, x)\}\right) $$
converges to the uniform distribution on $\T^{(d+1)s}$ as $N\to\infty$ and the convergence is uniform with
respect to the matrix $(a_{pq})$  the choices of $\{k^{(j,N)}\}_{j=1}^s$ satisfying  \eqref{1Split} and \eqref{2Split}, 
and $\rho_N$ satisfying \eqref{SmoothPhaseD}.
\end{prop}

The proof of Proposition \ref{ThLacunGamma}  
is similar to the proof of Proposition \ref{ThLacun}  so we omit it.
 
Apart from the modifications described above the proof of Proposition \ref{ThPoisResSmall}  is identical to the proof of Theorem \ref{ThPoisResd}.

Also, the derivation of Theorem \ref{ThSmBox} from Propositions \ref{prop.reduction.small} and \ref{ThPoisResSmall}
is the same as the derivation of Theorem \ref{dimd}  from Proposition \ref{prop.reduction} and Theorem \ref{ThPoisResd}.

 \section{  Continuous time}
\label{ScCont}
In this section we discuss briefly the behavior of the discrepancy function in the case of  linear flows on the torus. Given a set $\cC$ we consider the continuous time discrepancy function  
$$\bD(v,x,\cC,T)  = \int_{0}^{T} \one_{\cC}(S_v^t x)dt - T \Vol({\cC})$$
where $S_v^t=x+vt.$ 

In the case of balls, it was shown in \cite{dbconvex} that for $d\geq 4$, the continuous time discrepancy function has a similar behavior as the discrete time discrepancy, namely  it converges in distribution after normalization by a factor  $T^{(d-3)/2(d-1)}$. 

Curiously,  for balls in dimension $d=3$, the continuous time discrepancy behaves 
similarly to the discrete discrepancy of boxes and gives rise to a Cauchy distribution after normalization by $\ln T$. This will be proved in \S  \ref{secballs} below. 

It was also shown in \cite{dbconvex} that for balls in dimension $d=2$ the  continuous time discrepancy  converges, without any normalization, in distribution. In \S \ref{seccont} we will show that this is 
also the case in any dimension $d\geq 2$ for the continuous time discrepancy for boxes.

 \subsection{  Boxes.} \label{seccont} Let $\cC=\MA (\prod_j (0, u_j)).$
We assume that the triple $(\MA,x, v)$ is distributed according to 
a smooth density of compact support and
that $\MA \in \rm{SL}_d(\R)$ is such that $||\MA-I||\leq \eta$ where $\eta$ is sufficiently small.

\begin{theo}
\label{ThCTRB}
As $T\to\infty,$ $\bD(v,x,\cC,T)$ converges in distribution. 
\end{theo}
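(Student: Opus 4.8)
The plan is to show that the indicator of a box $\cC = A(\prod_j (0,u_j))$ is (with probability one over the randomness) a coboundary for the linear flow $S_v^t$, which immediately gives a uniformly bounded — indeed convergent — discrepancy. Concretely, write the characteristic function $\chi_\cC$ in its Fourier series $\chi_\cC(y) = \Vol(\cC) + \sum_{k \in \Z^d - \{0\}} \hat\chi_\cC(k) e^{2\pi i (k,y)}$; since $\cC$ is a parallelepiped, $\hat\chi_\cC(k)$ is a product of one-dimensional kernels and so $|\hat\chi_\cC(k)| \leq C \prod_j \min(1, 1/|\brk_j|)$ in the notation $\brk = A^\top k$ used in Section~\ref{sec3}. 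Along the flow, $\chi_\cC(S_v^t x) - \Vol(\cC) = \sum_{k \neq 0} \hat\chi_\cC(k) e^{2\pi i (k,x)} e^{2\pi i t (k,v)}$, so formally the coboundary solving $\psi \circ S_v^t - \psi = $ (time-$t$ ergodic integral minus drift, differentiated) is
\begin{equation*}
\psi(x) = \sum_{k \neq 0} \frac{\hat\chi_\cC(k)}{2\pi i (k,v)} e^{2\pi i (k,x)},
\end{equation*}
and then $\bD_\cC(v,x,T) = \psi(S_v^T x) - \psi(x)$, which is bounded by $2\|\psi\|_\infty$ and converges in distribution because $(S_v^T x, x)$ equidistributes.

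The first thing I would do is make this rigorous by establishing that $\psi \in L^\infty$ (or at least $L^2$, which suffices for convergence in distribution after checking the partial sums converge) for almost every $(A, v)$. The key estimate is a Diophantine/small-divisor bound: I need $\sum_{k \neq 0} \frac{|\hat\chi_\cC(k)|}{|(k,v)|} < \infty$. Using $|\hat\chi_\cC(k)| \leq C \prod_j \min(1, |\brk_j|^{-1})$ and grouping $k$ by the dyadic sizes of the $|\brk_j|$ exactly as in the claims of Section~\ref{sec3}, the number of $k$ in a box $\{|\brk_j| \asymp 2^{m_j}\}$ is $O(\prod_j 2^{m_j})$ with the $k$'s spread out, so the $|\hat\chi_\cC(k)|$ sum to $O(1)$ on each such box while $|(k,v)|$ is bounded below by $c 2^{-\sum m_j(1+\epsilon)}$ away from a small-measure set of $v$ by a Borel–Cantelli argument. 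Summing the resulting geometric-type series over $(m_1, \dots, m_d)$ gives the finiteness of $\sum |\hat\chi_\cC(k)|/|(k,v)|$ for a.e.\ $v$, hence $\psi$ is a continuous (bounded) function.

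Then I would assemble the argument: for $(A,v)$ in the full-measure good set, $\bD_\cC(v,x,T) = \psi(x + vT) - \psi(x)$ identically in $T$ and $x$ (this identity just needs termwise integration of the Fourier series, legitimate once the series for $\psi$ converges absolutely), so $\bD_\cC$ takes values in a fixed bounded set; as $T \to \infty$, for a.e.\ $v$ the point $x + vT$ equidistributes on $\T^d$ independently of $x$, so $(\psi(x+vT), \psi(x))$ converges in distribution to $(\psi(X'), \psi(X))$ with $X', X$ independent uniform, and then averaging over the random $(A,v,x)$ gives a limiting distribution for $\bD_\cC$. The main obstacle is the small-divisor estimate ensuring $\psi \in L^\infty$ uniformly enough across the random parameters — i.e.\ controlling the interplay between the anisotropic decay of the box's Fourier coefficients (governed by $A^\top k$) and the resonances $(k,v)$; the mild hypothesis $\|A - I\| \leq \eta$ is used precisely so that $|\brk_j| \asymp |k_j|$ and the counting in Section~\ref{sec3} transfers verbatim, and the smooth compactly supported joint density of $(A,x,v)$ is what lets the exceptional null set be absorbed. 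A secondary (routine) point is that this coboundary phenomenon is exactly what fails for balls — there $\hat\chi_\cC(k)$ decays only like $|k|^{-(d+1)/2}$ isotropically, so the series for $\psi$ diverges and one genuinely sees growth, consistent with \cite{dbconvex} and with the $d=3$ Cauchy statement to be proved in Section~\ref{secballs}.
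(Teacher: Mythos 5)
Your overall strategy (the indicator of a box is a coboundary for a.e.\ $(A,v)$, hence $\bD_\cC(v,x,T)=\psi(x+vT)-\psi(x)$ stays bounded and converges in law once the shift equidistributes) is the same one the paper uses, but the key analytic step --- the small-divisor estimate --- has a genuine gap as you have written it. On the dyadic box $B_m=\{k:|\brk_j|\asymp 2^{m_j}\}$ you bound $\sum_{k\in B_m}|\hat\chi_\cC(k)|=O(1)$ and then divide by the worst-case lower bound $\min_{k\in B_m}|(k,v)|\geq c\,2^{-(1+\eps)\sum_j m_j}$ coming from Borel--Cantelli. This yields $\sum_{k\in B_m}|\hat\chi_\cC(k)|/|(k,v)|=O\bigl(2^{(1+\eps)\sum_j m_j}\bigr)$, and the sum of these bounds over $(m_1,\dots,m_d)$ diverges; it is not a convergent geometric-type series. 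What you are discarding is that only very few $k$ in $B_m$ come close to realizing the minimum of $|(k,v)|$: for most $k$ one has $|(k,v)|\asymp|k|$, and this must be exploited quantitatively. The paper does so by an expectation computation over the random parameters: after Parseval it suffices to show $\sum_k\bigl(\prod_j\brk_j\,(k,v)\bigr)^{-2}<\infty$ for a.e.\ $(A,v)$, and this is proved by slicing according to the sizes $|\brk_j|\asymp|k|^{s_j}$ and $|(k,v)|\asymp|k|^{s_d}$, noting that the probability of such a configuration is $O(|k|^{s+d\eps-d})$, and summing $\E(1_{\Omega}\Gamma_k)$ over $k$ and over the admissible exponent tuples. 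If you insist on the $\ell^1$ route (so that $\psi\in L^\infty$), the same repair is available: $\E\bigl(1_{|(k,v)|\geq\delta_k}\,|(k,v)|^{-1}\bigr)=O(\ln|k|/|k|)$ when $v$ has a smooth compactly supported density, and $\sum_k|\hat\chi_\cC(k)|\ln|k|/|k|<\infty$; but the uniform worst-case dyadic bound alone cannot give this. You also need the randomness of $A$ (the paper's $\delta(A)$ reduction) to handle $k$ for which some $\brk_j$ is small even though $|k|$ is large, e.g.\ when some coordinates of $k$ vanish.

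A secondary imprecision: for fixed $v$ and $T$ the pair $(x+vT,x)$ is a deterministic function of $x$, so it does not ``equidistribute independently of $x$''. The correct mechanism, and the one the paper uses after truncating to $|k|\leq N$ uniformly in $T$, is that $vT$ modulo the lattice (the paper works in $(\reals/2\Z)^d$) equidistributes as $T\to\infty$ because $v$ has a smooth density; this shift is then asymptotically independent of $x$, which does produce the limit law of $\psi(X')-\psi(X)$ with $X',X$ independent uniform that you announce.
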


\begin{proof}
We have
$$ \bD(v,x,\cC,T)=4^d \sum_{k} \left[ \prod_j \left(\frac{\sin\left(2\pi \brk_j u_j\right)}{\brk_j} \right) \right]
\frac{\sin(\pi\langle k,vT \rangle)}{\pi\langle k,v \rangle} \cos (2\pi\langle k,x \rangle+\phi_{k, T, v}) .$$
where 
%$\Phi_m(u)=\sin(2\pi m u)/m$
%$\Phi$ is given by \eqref{kernel} and 
$\brk_j$ is given by \eqref{sidedual}.
We claim that for almost all $\MA, v$ there exist
a constant $C(\MA, v)$ such that
$$||\bD(v,x,\cC,T)||_{L^2_x} \leq C(\MA,v) $$
and moreover for each $\eps$ there exists $N=N(\MA,v)$ such that
$$\left\Vert \sum_{|k|>N} \left[ \prod_j \left(\frac{\sin\left(2\pi \brk_j u_j\right)}{\brk_j} \right)\right]
\frac{\sin(\pi\langle k,vT \rangle)}{\pi\langle k,v \rangle} \cos (2\pi\langle k,x \rangle+\phi_{k, T, v}) \right\Vert_{L^2_x}\leq \eps. $$ 
To this end it suffices to demonstrate that for almost every $(\MA,v)$ 
$$ \sum_k \left(\left(\prod_j  \brk_j \right) \langle k,v \rangle\right)^{-2} <\infty. $$
Since $\det(\MA)\neq 0$ there exists $\delta(\MA)$ such that for each 
$k$ there is $l\in\{1\dots d\}$ such that
$|\brk_l|>\delta |k| .$ Accordingly it suffices to check that for each $l$
%$$ \sum_k \left(\left(\prod_{j\neq l} \brk_j \right) \langle k,v \rangle |k| \right)^{-2} :=
$$\sum_k \Gamma_k(\MA,v) <\infty \quad \text{where} \quad
\Gamma_k(\MA,v)=\left(\left(\prod_{j\neq l} \brk_j \right) \langle k,v \rangle |k| \right)^{-2}. $$
All sums have the same form so we consider the case $l=d.$ 
Given numbers $s_1,\dots s_{d-1}, s_d$ and $\eps>0$ denote
$\Omega(k, s_1\dots s_d)=$
$$\{(\MA, v): |\brk_j| \in [|k|^{s_{j}}, |k|^{s_{j}+\eps}] 
\text{ for }j=1,\dots, d-1\text{ and } 
|\langle k,v \rangle| \in [|k|^{s_{d}}, |k|^{s_{d}+\eps}]\}. $$
Then
$$\Prob(\Omega(k, s_1\dots s_d)) \ll    |k|^{s+d\eps-d} $$
where $s=\sum_{j=1}^d s_j.$ We draw two conclusions from this estimate.
First, for almost all $(\MA,v)$ we have
$$ \left|\left(\prod_{j=1}^{d-1} \brk_j \right) \langle k,v \rangle\right|>|k|^{-2d\eps} $$
provided that $|k|$ is sufficiently large.

Second, for $s\geq -2d\eps$ we have
$$ \E(\mathbbm{1}_{\Omega(k, s_1\dots s_d)}(\MA,v) \Gamma_k(\MA,v))\leq  
C |k|^{d\eps-[(d+2)+s]}.$$
Hence
$$ \E\left( \sum_k \mathbbm{1}_{\Omega(k, s_1\dots s_d)}(\MA,v) \Gamma_k(\MA,v)\right) <\infty . $$
Summing over all $d$-tuples $(s_1\dots s_d)\in (\eps\mathbb{Z})^d$ such that
$$ s_j \leq 1, \quad s=\sum_{j=1}^d s_j> -2 d \eps $$
we get 
$\displaystyle \E\left( \sum_k  \Gamma_k(\MA,v)\right) <\infty  $
proving our claim. 

The claim implies that for large $N$ the distribution of $\bD(v,x,\cC,T)$ is close to the distribution of
$$ \bD_{ N}^-(v,x,\cC,T)=4^d \sum_{|k|\leq N} \prod_j \left(\frac{\sin\left(2\pi \brk_j u_j\right)}{\brk_j} \right)
\frac{\sin(\pi\langle k,vT \rangle}{\pi\langle k,v \rangle} \cos (2\pi\langle k,x \rangle+\phi_{k, T, v}) .$$
Hence it remains to prove that $\bD_{N}^-(v,x,\cC,T)$ converges in distribution as
$T\to\infty.$ This convergence follows easily from the fact that as $T\to\infty$
$\{vT\}$ becomes uniformly distributed on $(\R/2\Z)^d.$
\end{proof}

A similar argument shows that randomness in $\cC$ is not necessary. Namely, we have the following result.

\begin{theo}
\label{ThCTFB}
Let $\cC=\prod_j (0, u_j)$. Suppose that the pair $(x,v)$ has a smooth distribution of compact support.
Then 
$\bD(v,x,\cC,T)$ converges in distribution as $T\to\infty.$  
\end{theo}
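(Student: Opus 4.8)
The plan is to mimic the proof of Theorem~\ref{ThCTRB} almost word for word; the one place where the argument must change is that the averaging over the matrix $A$, which in Theorem~\ref{ThCTRB} produced the bound on $\Prob(\Omega(k,s_1,\dots,s_d))$, is no longer available (here the matrix is $I$) and must be replaced by an elementary divisor-sum estimate. Expanding the Fourier series of $1_\cC$ and subtracting the mean, one writes $\bD_\cC(v,x,T)$ as a finite sum, over the nonempty subsets $S\subseteq\{1,\dots,d\}$, of terms $\bigl(\prod_{j\notin S}u_j\bigr)\,\bD^{S}_\cC(v,x,T)$, where $\bD^{S}_\cC$ is a box discrepancy in the coordinates indexed by $S$, with frequency $k$ ranging over $\{k\neq 0:\ k_j=0\ \text{for}\ j\notin S\}$ and Fourier coefficients bounded in modulus by $\prod_{j\in S}1/(\pi|k_j|)$. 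Since $|\sin(\pi(k,vT))|\le 1$, it suffices --- exactly as in Theorem~\ref{ThCTRB} --- to show that for almost every $v$, for each nonempty $S$,
\[
\sum_{k\neq0,\ \supp k\subseteq S}\ \Bigl(\prod_{j\in S}|k_j|\Bigr)^{-2}(k,v)^{-2}<\infty,
\]
because this yields $\|\bD_\cC(v,\cdot,T)\|_{L^2_x}\le C(v)$ together with a uniform-in-$T$ tail bound $\|\bD_\cC(v,\cdot,T)-\bD^-_{\cC,N}(v,\cdot,T)\|_{L^2_x}\to 0$ as $N\to\infty$, where $\bD^-_{\cC,N}$ denotes the truncation to $|k|\le N$.

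To prove the displayed bound, fix a nonempty $S$ with $n=|S|$, let $l=l(k)\in S$ realize $|k_l|=\max_{j\in S}|k_j|\ge |k|/\sqrt n$, and set $\Gamma_k(v)=\bigl(|k|\cdot\bigl|\prod_{j\in S\setminus\{l\}}k_j\bigr|\cdot|(k,v)|\bigr)^{-2}$, so that the summand is $\le n\,\Gamma_k(v)$. Since $(x,v)$ has a smooth compactly supported density, for a.e.\ $v$ the conditional density of $(v_j)_{j\in S}$ is smooth and bounded, whence $\Prob(|(k,v)|<\eps)\le C\eps/|k|_\infty$ and the density of $(k,v)$ is $\le C/|k|_\infty$. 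The key input is the elementary bound
\[
\sum_{k\neq0,\ \supp k=S}\frac{1}{|k|^{2}\,\bigl|\prod_{j\in S\setminus\{l(k)\}}k_j\bigr|}\ \le\ \sum_{R\ge1}\frac{2n}{R^{2}}\Bigl(\sum_{1\le|m|\le R}\frac1{|m|}\Bigr)^{n-1}=\sum_{R\ge1}O\!\Bigl(\frac{(\log R)^{n-1}}{R^{2}}\Bigr)<\infty,
\]
obtained by summing first over $k_l=\pm R$ and then over the $n-1$ remaining coordinates (of modulus $\le R$). It follows that $\Prob(\Gamma_k(v)>1)=\Prob\bigl(|(k,v)|<(|k|\prod_{j\neq l}|k_j|)^{-1}\bigr)$, and --- splitting the integral at $|(k,v)|=(|k|\prod_{j\neq l}|k_j|)^{-1}$ --- also $\E(\Gamma_k(v)\wedge1)$, are both $\le C\bigl(|k|^{2}\prod_{j\in S\setminus\{l\}}|k_j|\bigr)^{-1}$, hence summable over $k$. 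Borel--Cantelli then gives that for a.e.\ $v$ only finitely many $k$ satisfy $\Gamma_k(v)>1$, while $\sum_k(\Gamma_k(v)\wedge1)<\infty$; combining, $\sum_k\Gamma_k(v)<\infty$ for a.e.\ $v$, and summing over the finitely many $S$ proves the claim.

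Finally, convergence in distribution follows as at the end of the proof of Theorem~\ref{ThCTRB}. For a.e.\ fixed $v$ the truncation $\bD^-_{\cC,N}(v,\cdot,T)$ has the form $F_N(vT\bmod 2,\,x)$ for a fixed continuous $F_N$, and for a.e.\ $v$ one has $(k,v)\neq0$ for every $0\neq k\in\Z^d$, so $vT\bmod 2$ equidistributes on $(\R/2\Z)^d$ as $T\to\infty$; together with the smoothness of the conditional law of $x$ given $v$, this shows $\bD^-_{\cC,N}(v,\cdot,T)$ converges in distribution as $T\to\infty$. Feeding in the uniform-in-$T$ tail bound through an $\eps/3$ argument, $\E_{x\mid v}\psi(\bD_\cC(v,x,T))$ converges as $T\to\infty$ for every bounded Lipschitz $\psi$ and a.e.\ $v$; as this quantity is bounded by $\|\psi\|_\infty$, dominated convergence yields the convergence of $\E_{(v,x)}\psi(\bD_\cC(v,x,T))$, i.e.\ convergence in distribution of $\bD_\cC(v,x,T)$. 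The only genuine subtlety, compared with Theorem~\ref{ThCTRB}, is that frequencies $k$ with a vanishing coordinate now genuinely appear (forcing the decomposition over subsets $S$) and that the sets $\{\Gamma_k(v)>1\}$ must be controlled by Borel--Cantelli rather than by a single expectation, since $\E_v\Gamma_k(v)=\infty$; both points are disposed of by the divisor-type estimate above, which is where the little work of the proof is concentrated.
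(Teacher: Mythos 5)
Your decomposition over nonempty index sets $S\subseteq\{1,\dots,d\}$ and, above all, the divisor-sum estimate
\[
\sum_{\supp k=S}\ \frac{1}{|k|^{2}\prod_{j\in S\setminus\{l(k)\}}|k_j|}=\sum_R O\!\left(\frac{(\log R)^{|S|-1}}{R^{2}}\right)<\infty
\]
are exactly the right replacement for the $A$-averaging used in Theorem~\ref{ThCTRB}. This is the genuine content that the paper's remark (``now $|\bar k_j|\ge 1$'') glosses over: a naive transcription of the $\Omega(k,s_1,\dots,s_d)$ argument with $A=I$ fixed would try a Borel--Cantelli bound of the form $\sum_k |k|^{-1-\eps}$ over $k\in\Z^d$, which diverges for $d\ge 2$; your approach extracts the extra decay from the discreteness $\prod_j|k_j|\gtrsim 1$ rather than from the measure of $A$, and the split $\E(\Gamma_k\wedge 1)$ + Borel--Cantelli on $\{\Gamma_k>1\}$ closes the argument cleanly. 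Modulo a harmless inconsistency between $\supp k\subseteq S$ and $\supp k= S$ (the decomposition only produces $\supp k=S$, since $\hat\delta_j(0)=0$), this part is correct, and arguably the paper is not entitled to call it a ``simplification.''

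The last paragraph, however, has a genuine gap. You claim that for a.e.\ \emph{fixed} $v$ the conditional law of $\bD^-_{\cC,N}(v,x,T)$ (with $x$ random according to its conditional density given $v$) converges as $T\to\infty$, deducing this from the equidistribution of the deterministic orbit $\{vT\bmod 2\}$. That inference is false. For fixed $v$, $vT\bmod 2$ is a single deterministic point, not a random variable, and $F_N(vT\bmod 2,x)$ with $x\sim\mu_v$ has a law that genuinely depends on $vT\bmod 2$ and oscillates without converging as $T\to\infty$ — already a one-term example $F(w,x)=\cos(2\pi x_1+w)$ with a non-$x_1$-invariant $\mu_v$ shows this. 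Equidistribution of the orbit is a \emph{time-average} statement and does not give a pointwise-in-$T$ limit, so the dominated-convergence step that follows has no pointwise limit to apply to. The correct argument (the one actually used at the end of the proof of Theorem~\ref{ThCTRB}) keeps $v$ random: since $v$ has a smooth compactly supported density, $vT\bmod 2$ converges in distribution to the uniform measure on $(\R/2\Z)^d$ as $T\to\infty$, and, by Riemann--Lebesgue applied to the non-zero Fourier modes in $vT$, the triple $(vT\bmod 2,\,v,\,x)$ converges jointly to $(\text{Uniform})\times\text{law}(v,x)$; this yields convergence in distribution of $\bD^-_{\cC,N}$, and then the uniform-in-$T$ tail bound finishes. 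So: replace the conditioning-on-$v$ reasoning by the unconditional Riemann--Lebesgue/Weyl argument and the proof is complete.
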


The proof of Theorem \ref{ThCTFB} is similar to the proof of Theorem \ref{ThCTRB} with the additional 
simplifications since now $|\bar k_j|\geq 1$ and so 
only $\langle k,v \rangle$ may possibly be small. Therefore we leave the proof to the reader.

 \subsection{  Balls.}  \label{secballs} In this section,  $\cC$ is assumed to be a ball of radius $r$ in $\T^3$. We suppose that $v$ is chosen according
to a smooth density $p$ whose support is compact and does not contain the origin, $r$ is uniformly 
distributed on some segment $[a,b],$  $x$ is uniformly distributed on $\T^3$ and $v,$ $r$ and $x$
are independent.
%Let $\sigma$ denote 
%the product of the distribution of $v,$ the distribution of $r$  and the Haar measure on $\T^3.$

\begin{theo}
\label{ThCauchyBalls}
There exists a constant $\trho$ such that
$ \frac{\bD(v,x,{B(0,r)},T)}{\trho r \ln T}$ converges as $T\to\infty$ to the standard Cauchy distribution.
\end{theo}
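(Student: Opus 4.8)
The plan is to mimic the architecture of the proof of Theorem \ref{dimd}, replacing the box geometry with ball geometry but working in continuous time on $\T^3$. First I would write the Fourier expansion of the continuous-time discrepancy for the ball:
$$\bD_{B(0,r)}(v,x,T)=\sum_{k\in\Z^3-\{0\}} \hat\chi_{B(0,r)}(k)\, \frac{\sin(\pi(k,vT))}{\pi(k,v)}\cos\bigl(2\pi(k,x)+\phi_{k,T,v}\bigr),$$
where $\hat\chi_{B(0,r)}(k)$ is the Fourier coefficient of the indicator of a ball of radius $r$ in $\R^3$. The crucial input is the classical asymptotics of the Fourier transform of the ball indicator in dimension $3$: $\hat\chi_{B(0,r)}(k)$ decays like $|k|^{-2}$ with an oscillatory factor, more precisely $\hat\chi_{B(0,r)}(k)= c\, r\, |k|^{-2}\cos(2\pi r|k|+\text{phase})+O(r^{1/2}|k|^{-5/2})$ or a similar expression obtained from the Bessel-function asymptotics $J_{3/2}$. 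The point of working in $d=3$ specifically is that this $|k|^{-2}$ decay, combined with the $1/(k,v)$ resonance denominator, produces exactly the same kind of sum $\sum 1/(|k|^2 (k,v))$ that governs the box case, so the normalization is $\ln T$ rather than a power of $T$.

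Next I would run the harmonic-analysis reduction as in Section \ref{sec3}: eliminate non-resonant terms, eliminate a small measure set of $v$ for which the resonances $(k,v)$ are too strong, and show that after these reductions the essential contribution to $\bD_{B(0,r)}$ comes from a sparse set of resonant frequencies $k$ with $|k|^2\,\|(k,v)\|$ small, weighted by the oscillatory Fourier coefficient of the ball. The randomness in $r$ plays the role that the random side-lengths $u$ played for boxes: averaging the oscillatory factor $\cos(2\pi r|k|+\cdots)$ over $r\in[a,b]$ against the process of resonant $k$'s will make the contributions of distinct denominators asymptotically independent with a symmetric, compactly supported law, exactly as the $\Gamma_k$ were handled via Lemma \ref{LmPoissonCauchy}(b) and Lemma \ref{LmPT}. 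The randomness in $x$ supplies the independent phases $\cos(2\pi(k,x)+\phi)$. Then I would reduce the limiting distribution of the resonant part to a Poisson limit theorem for visits of a Cartan-type action on the space of lattices in $\R^4$ (or $\R^3$, depending on how one encodes $(k,v)$ and $|k|$), invoking Theorem \ref{ThPLat} and its generalization, Rogers' identities (Lemma \ref{LmRI}), and the equidistribution-of-unipotents input (Proposition \ref{mixing}). Collecting constants as in the proof of Theorem \ref{dimd} identifies $\trho$.

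The main obstacle I expect is the ball geometry in the Fourier/lattice reduction. Unlike a box, whose indicator factors and whose Fourier coefficients are products of one-dimensional kernels $\Phi_{\brk_j}$, the ball's Fourier coefficient carries the genuinely oscillatory factor $\cos(2\pi r|k|+\cdots)$ coming from $J_{3/2}$; one has to control the error term in the Bessel asymptotics uniformly and show it does not contribute in the limit, and one must verify that after the $r$-average the effective ``numerator'' variable $\Gamma_k$ still has a nondegenerate symmetric distribution with compact support and that it decorrelates from the ``denominator'' variable $\Theta_k$. A second, more technical difficulty is that the relevant cusp in the space of lattices is now governed by the two quantities $|k|$ (a quadratic form in $k$) and $\|(k,v)\|$, rather than by a product of $d$ linear forms, so the Rogers-identity bookkeeping that in Section \ref{sec5} controlled the geometry of the cusp has to be redone for this slightly different target region; I would expect this to go through since Rogers' identities are dimension-robust, but checking the measure estimates on the complicated cusp is where the real work lies.
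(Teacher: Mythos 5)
Your proposal follows the same architecture as the paper's own (which is itself only an outline): the Bessel/Fourier expansion with $c_k\sim \frac{r}{\pi |k|^2}\sin(2\pi r|k|)$, the reduction to the resonant harmonics with $\frac{\eps}{\ln T}<|k|^2\,|(k,v)|<\frac{1}{\eps\ln T}$ and $|k|<T$, a Poisson limit theorem for the process $\bigl\{|k|^2(k,v)\ln T,\ (k,Tv)\ \mathrm{mod}\ 2,\ \{(k,x)\},\ \{r|k|\}\bigr\}$ in which the random radius $r$ supplies the symmetric i.i.d.\ numerators exactly as you describe, and finally Lemma \ref{LmPoissonCauchy}. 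The one point where your expectations diverge from what actually happens is the lattice-dynamics step, which you flag as the hard part (a Cartan-type action, possibly on lattices in $\reals^4$, with the Rogers bookkeeping redone for a more complicated cusp region). In the paper this step is in fact \emph{simpler} than in the box case: after normalizing $v_3=1$, the resonance condition is encoded by the single one-parameter diagonal flow $g_t=\mathrm{diag}(e^{t},e^{t},e^{-2t})$ acting on $SL_3(\reals)/SL_3(\integers)$, the cusp neighborhood is cut out by the rotationally symmetric function $\sum_{u\ \text{prime}} 1_{I}(x^2+y^2)\,1_K((x^2+y^2)z)$, to which Rogers' identities apply with no additional bookkeeping, and --- crucially --- the two-parameter family $\Lambda(v_1,v_2)$ already sweeps out the full strong unstable foliation of $g_t$, so the auxiliary random deformations (the analogue of $G_\eta$) that were forced on us for boxes are unnecessary here. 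Being rank one, this flow also has genuine stable and unstable manifolds, which removes the Weyl-chamber difficulties of Section \ref{sec5}. So your plan is correct and essentially the paper's, but the ``real work'' you anticipate largely evaporates once the resonance is encoded by this one-parameter flow on three-dimensional lattices rather than by a higher-rank action.
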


\begin{proof}
The proof is similar to the proof of Theorem \ref{dimd}  so we just outline the main steps. We have
$$\bD(v, x,{B(0,r)}, T)=\sum_{k\in\Z^3} f_k(r,v,x,T)=\sum_{k\in\Z^3, k \text{ prime}}  g_k $$
where
$f_k = c_k \frac{\cos[2\pi\langle k,x \rangle+\pi\langle k,Tv \rangle] \sin(\pi\langle k,Tv \rangle)}{\pi \langle k,v \rangle},$
$g_k=\sum_{p=1}^\infty f_{kp} $ and 
$$c_k\sim \frac{r}{\pi |k|^2} \sin(2\pi r |k|). $$
Similarly to Section \ref{sec3}  (see also \cite[Section 3 and \S 6.4]{dbconvex})                                                                                                                                                                                                                                                                                                                                                                                                                        we show that the main contribution to the discrepancy comes from
the harmonics where 
$$\frac{\eps}{\ln T}<|\langle k,v \rangle| |k|^2< \frac{1}{\eps \ln T}\quad\text{and}\quad
|k|<T.$$ Therefore the key step in proving Theorem \ref{ThCauchyBalls} is the following.
\begin{prop} 
\label{BallPoisson}
The point process
$$ \left\{ |k|^2 \langle k,v \rangle \ln T, \langle k,Tv \rangle \text{\rm{ mod }}2, \{\langle k,x \rangle\}, \{r|k|\}\right\}_{|k|\leq T, \eps k^2 |\langle k,v \rangle| \ln T<1, k \text{ prime} } $$
converges as $T\to\infty$ to a Poisson process on
$[-\frac{1}{\eps}, \frac{1}{\eps}]\times (\R/2\Z)\times (\R/\Z)^2$ with constant intensity.
\end{prop}
The proof of Proposition \ref{BallPoisson} is similar to the proof of Theorem \ref{ThPoisResd} and consists of the following steps.

(a) We prove the Poisson limit for $\{|k|^2 \langle k,v \rangle \ln T\}$ using the argument of Section \ref{SSDenom}. We first normalize one of the coordinates, say $v_3$, of the vector $v$ to $1$, which reduces the study of the Poisson limit for $\{|k|^2 \langle k,v \rangle \ln T\}$ to the study of the visits to the cusp in $\cM={\rm SL}_3(\reals)/{\rm SL}_3(\integers)$ of $g_t \Lambda$ with 
$$ g_t=\left(\begin{array}{lll} e^{t} & 0 & 0 \\
                                                            0 & e^{t} & 0 \\
                                                            0 & 0 & e^{-2t} \\
\end{array}\right)
\text{ and }
\Lambda(v_1,v_2)=\left(\begin{array}{lll} 1 & 0 & 0 \\
                                                            0 & 1 & 0 \\
                                                            v_1 & v_2 & 1 \\
\end{array}\right).$$
More precisely, the relevant neighborhood in the cusp is defined {\it via} the function 
$$ f(x, y, z)=\mathbbm{1}_I(x^2+y^2) \mathbbm{1}_K((x^2+y^2) z) $$
where
%\begin{equation*}
% \Phi(\cL)=\sum_{u\in \cL\text{ prime}} \mathbbm{1}_{I}(x(u)^2+y(u)^2) \mathbbm{1}_K((x(u)^2+y(u)^2)z(u)) 
%\end{equation*} 
$I=[1,e)$, $K=[-\frac{1}{\breps \ln T},\frac{1}{\breps \ln T}]$.  We then define 
%\begin{equation*}
 $\Phi(\cL)=\cS(f).$
 %\sum_{u=(x,y,z)\in \cL\text{ prime}} \mathbbm{1}_{I}(x^2+y^2) \mathbbm{1}_K((x^2+y^2)z) \left(\ln T (x^2+y^2)z\right) 
%\end{equation*} 

The Poisson limit of $\{|k|^2 \langle k,v \rangle \ln T\}$  is obtained from a Poisson limit for 
$$\{\Phi(g_{t} \Lambda) \}_{t \in [0,\ln T], %\Phi(g_{t} \Lambda)=1
}.$$ 
In this setting, the manifold determined by $\Lambda(v_1,v_2)$ consists of the full strong unstable foliation of $g_t$ and there is no need for extra parameters to establish the Poisson limit. 

(b) We prove that $\langle k,Tv \rangle \text{\rm{ mod }}2$ is asymptotically independent of $|k|^2\langle k,v \rangle \ln T$ using the fact
that their values are determined at different scales (cf. proof of (h7) in Section \ref{SSDenom}).

(c) We show that $\langle k,x \rangle$ and $\{r|k|\}$ are independent of the previous data using the superlacunarity 
of the sequence of small denominators (cf. Proposition \ref{ThLacun}). 
\end{proof}  

\appendix
 \section{  Norms}
\label{AppNorms}
 \subsection{  Preliminaries.}
It is well known that the fluctuation of ergodic integrals depends strongly on the regularity properties of the observables.
To gauge such regularity we will need several norms on the space of lattices.
 
Let $C^\bs(\cM_{d+1})$ denote  the space of smooth functions on $\cM_{d+1}.$ 
Let $$\fU_1, \fU_2, \dots ,\fU_{(d+1)^2-1}$$ 
be a basis in the space of left invariant vectorfields on $\cM_{d+1}.$ We let
$$ ||\Phi||_{C^\bs}=\max_{0\leq k\leq \bs} \max_{i_1, i_2\dots i_k} 
\max_{\cL\in \cM_{d+1}} \left|\partial_{\fU_{i_1}} \partial_{\fU_{i_2}} \dots \partial_{\fU_{i_k}} \Phi(\cL) \right|.
$$
Let $H^\bs$ denote the Sobolev space of index $\bs.$ It is equipped with the norm
$$ ||\Phi||_s^2=\sum_{0\leq k\leq \bs} \sum_{i_1, i_2\dots i_k} \int
\left|\partial_{\fU_{i_1}} \partial_{\fU_{i_2}} \dots \partial_{\fU_{i_k}} \Phi(\cL) \right|^2 d\mu(\cL).
$$

Let $\ba(\cL)$ denote the length of the shortest nonzero vector in $\cL.$
\begin{lemma}
\label{LmCutOff}
For each $\bs$ there are constants $C_1, C_2$ such that for each $\delta\leq 1$ there is a function
$\fh_{1, \delta}:\cM\to\reals$ such that
\begin{itemize}
\item $0\leq \fh_{1, \delta}\leq 1,$
\item $\fh_{1,\delta}(\cL)=1$ if $\ba(\cL)\leq \delta,$
\item $\fh_{1,\delta}(\cL)=0$ if $\ba(\cL)\geq C_1\delta,$
\item $||\fh_{1,\delta}||_{C^\bs(\cM)} \leq C_2.$
\end{itemize}
\end{lemma}

\begin{proof}
This lemma is a special case of  \cite[\S 4.2]{KM2}. For completeness, we reproduce the formula from 
\cite{KM2}. Let $\Upsilon$ be a nonnegative function on $SL_{d+1}(\R)$ with integral one supported on the set
$$ ||g||^2\leq C_1, \quad ||g||^{-2}\leq C_1. $$
Then one can set
$$ \fh_{1, \delta}(\cL)=\int_{SL_{d+1}(\R)}  \Upsilon(g) \; \mathbbm{1}_{\ba(g\cL)\leq C_1 \delta}\; d\mu(g).
 \qedhere
$$
\end{proof}

We also need a space $H^{\bs, \br}$ of functions on $\cM_{d+1}$ which can be well approximated by $H^\bs$ functions
(see Definition \ref{sr}). Similar norms can be introduced on the Euclidean space $\reals^{d+1}.$
We note the following inequalities for $\Phi,\Psi \in C^\bs(\cM_{d+1})$
\begin{equation}
\label{Hs-CsLat}
||\Phi||_{\bs}\leq C_3 ||\Phi||_{C^\bs},
\end{equation}
\begin{equation}
\label{Hs-CsProd}
|||\Psi \Phi||_{\bs}\leq C_4 ||\Psi||_{C^\bs} ||\Phi||_{\bs}
\end{equation}
where the constants $C_3$ and $C_4$ depend on $\bs.$
Accordingly if $\Psi \in C^\bs(\cM_{d+1})$ is positive and $\Phi \in H^{\bs, \br}$ we get
\begin{equation}
\label{Hsr-CsrProd}
|||\Psi \Phi||_{\bs, \br}\leq C_5 ||\Psi||_{C^\bs} ||\Phi||_{\bs, \br}
\end{equation}
(In fact, \eqref{Hsr-CsrProd} holds for arbitrary smooth $\Psi$ since $\Psi$ can be represented as a difference
of two smooth positive functions, but we will only need \eqref{Hsr-CsrProd} for positive $\Psi$.)

We also need a space $C^{\bs, \br}(\R^{d+1})$ which is defined similarly to $H^{\bs, \br}.$

Namely, given $\bs, \br \geq 0$, we say that a function $f: \R^{d+1} \to \reals$ is in $C^{\bs,\br}$ 
with $||f||_{C^{\bs, \br}}=K$ 
if given $0<\eps\leq 1$ there are 
$C^\bs$-functions $f^-\leq f\leq f^+$ such that
$$ ||f^+-f^-||_{L^1(\R^{d+1})}\leq \eps \text{ and } ||f^\pm||_{C^\bs(\R^{d+1})} \leq K \eps^{-\br}.$$

\begin{lemma} 
\label{CrHs-Cs}
 For each integer $\bs$ and each $R$ there is a constant $C=C(R, \bs)$ such that:

(a) If $f$ is a $C^\bs(\reals^{d+1})$ function supported in the ball of radius $R$ about the origin then
\begin{equation}
\label{Hs-Cs}
 ||\cS(f)||_{H^\bs(\cM_{d+1})}\leq C ||f||_{C^\bs(\reals^{d+1})}. 
\end{equation}
and  if $f$ is a $C^{\bs, \br}(\reals^{d+1})$ function supported in the ball of radius $R$ about the origin then
\begin{equation}
\label{Hsr-Csr}
 ||\cS(f)||_{H^{\bs, \br}(\cM_{d+1})}\leq C ||f||_{C^{\bs, \br}(\reals^{d+1})}; 
\end{equation}
(b) Let $\fh_{2, \delta}=1-\fh_{1, \delta}$ where $\fh_{1, \delta}$ is a function from Lemma \ref{LmCutOff}.
Let $f$ be a $C^\bs(\reals^{d+1})$ function supported in the ball of radius $R$ about the origin. 
Then
$$ ||\cS(f) h_{2, \delta}||_{C^\bs(\cM_{d+1})} \leq C ||f||_{C^\bs(\R^{d+1})}\; \delta^{-(d+1)}. $$
\end{lemma}

\begin{proof}
Given a left invariant vectorfield $\fU$ on ${\rm SL}_{d+1}(\R)$ let
$\brfU$ be the corresponding left invariant vectorfield on $\reals^{d+1}.$
That is 
$$ (\partial_\brfU f)(x)=\frac{d}{dt}\big|_{t=0} f(g(t) x) $$
where $g(t)$ is a one parameter subgroup of ${\rm SL}_{d+1}(\R)$ such that $g'(0)=\fU.$

Since
$\partial_{\fU} \cS(f)=\cS(\partial_{\brfU} f),$
\eqref{Hs-Cs} follows from Lemma~\ref{LmRI}(c).

\eqref{Hsr-Csr} follows from \eqref{Hs-Cs} since $f^-\leq f \leq f^+$ implies 
$$\cS(f^-)\leq \cS(f) \leq \cS(f^+).$$
% \quad\quad\quad\quad\quad\quad\quad\quad\quad\quad\quad\quad\quad\quad\qedhere \quad\quad\quad$$

Next
$$ \left| \cS (f)(\cL) \fh_{2, \delta}(\cL)\right|\leq 
\mathbbm{1}_{\ba(\cL)\geq \delta} \sum_{\vv\in\cL, \text{ prime}} |f(\vv)|.$$
However if the shortest vector in $\cL$ is longer than $\delta$ there are at most $\cO(\delta^{-(d+1)})$
terms contributing to this sum. Accordingly 
$$||\cS(f) h_{2, \delta}||_{C^0(\cM_{d+1})}\leq C||f||_{C^0(\R^{d+1})}\; \delta^{-(d+1)}. $$
The higher derivatives are estimated similarly.
\end{proof}

 \subsection{  Proof of results of Section \ref{sec.norms}}
\label{AppNS}

\begin{proof}[Proof of Lemma \ref{LmNorms}]
(a) Let $\phi$ be a $C^\infty$ function such that $\phi(z)=1$ for $z\leq 0,$
$\phi(z)=0$ for $z\geq 1$ and $0\leq \phi(z)\leq 1$ for $0\leq z\leq 1.$
Given an interval $K=[k_1, k_2]$ let 
\begin{align*} 
\phi^+_{K,\eps}(z)&=\frac{1}{2}\left[\phi\left(\frac{z-k_2}{\eps}\right) -\phi\left(\frac{z-k_1+\eps }{\eps}  \right)\right], \\
\phi_{K,\eps}^-(z)&=\frac{1}{2}\left[\phi\left(\frac{z-k_2+\eps}{\eps}\right) -\phi\left(\frac{z-k_1 }{\eps}  \right)\right] . \end{align*}
Consider the following functions on $\reals^{d+1}$
$$ f_\eps^\pm(x, z)= \phi_{I, \eps}^\pm(x_1) 
\left(\prod_{j=2}^d \phi_{J, \eps}^\pm(x_j) \right) \phi_{K_M, \eps}^\pm(\Pi(\vv)),$$
where $K_M=[-\frac{1}{\breps M^2}, \frac{1}{\breps M^2}],$ and define as in \eqref{DefPhi} 
the Siegel transforms $\Phi_\eps^\pm$ of $f_\eps^\pm$ instead of  
$$f=\mathbbm{1}_I(x_1) \left(\prod_{j=2}^d \mathbbm{1}_J(x_j)\right) \mathbbm{1}_{K_M}(\Pi(\vv)).$$ 
Since
$ f_\eps^-\leq f \leq f_\eps^+ $
we conclude that 
$||f||_{C^{\bs, \bs}(\R^3)}=\cO(1).$ Now \eqref{Hsr-Csr} shows that $||\Phi||_{\bs, \bs}=\cO(1).$
%Then $\Phi_\e^-\leq\Phi\leq \Phi_\e^+$ where $\Phi_\eps^\pm=\cS(f_\eps^\pm).$
%By Lemma \ref{LmRI}(a) $||\Phi_\e^+-\Phi_\e^-||_{L^1}\leq C\e$ and by Lemma~\ref{CrHs-Cs}
%$||\Phi_\e^\pm||_\bs\leq C \e^{-\bs}.$ 
%This proves the estimate of part (a) for $\Phi.$ 
The norms of $\hPhi, \Phi_p$ and $\hPhi_p$ are
estimated similarly.

Part (b) follows from part (a) and \eqref{Hsr-CsrProd}.
%since 
%$||\Phi \fh_{1, \delta}||_{H^\bs}\leq ||\Phi||_{H^\bs} ||\fh_{1, \delta}||_{C^\bs}. $

Next, \eqref{PhiSq-Phi} shows that 
$$(\Phi_\eps^-)^2-\Phi_\eps^-\leq \Phi^2-\Phi\leq 
(\Phi_\eps^+)^2-\Phi_\eps^+.$$
Thus
$$
 \fh_{2, \delta} \left[(\Phi_\eps^-)^2-\Phi_\eps^-\right] \leq \fh_{2, \delta} \left[\Phi^2-\Phi\right] 
 \leq
\fh_{2, \delta} \left[(\Phi_\eps^+)^2-\Phi_\eps^+\right]. $$

We have
\begin{align*} 
 \mu\left(\fh_{2, \delta} 
\left(\left[(\Phi_\eps^+)^2-\Phi_\eps^+\right]-\left[(\Phi_\eps^-)^2-\Phi_\eps^-\right]\right)\right) &  \\
\leq 
 \mu
\left(\left[(\Phi_\eps^+)^2-\Phi_\eps^+\right]-\left[(\Phi_\eps^-)^2-\Phi_\eps^-\right]\right)&=\cO(\eps). 
\end{align*}
where the last step relies on Lemma \ref{LmRI}(c). 

Next, similarly to the proof of Lemma \ref{CrHs-Cs}
(b) we get
$$ ||((\Phi_\eps^\pm)^2-\Phi_\eps^\pm)h_{2, \delta} ||_{s,s}=\cO\left(\delta^{-2(d+1)}\right) $$
proving part (c).
%To bound $H^\bs$-norm of $(\Phi_\eps^\pm)^2-\Phi_\eps^\pm)h_{2, \delta} $ we need to control expressions 
%of the form
%$$ \fh_{2, \delta} \sum_{\vv_1, \vv_2} 
%\partial_{\brfU_{j_1'}} \partial_{\brfU_{j_2'}} \dots \partial_{\brfU_{j_{k'}'}} f_\eps^\pm
%\partial_{\brfU_{j_1''}} \partial_{\brfU_{j_2''}} \dots \partial_{\brfU_{j_{k''}''}} f_\eps^\pm
%$$
%where $k'+k''\leq \bs.$ Note that if $\fh_{2, \delta}\neq 0$ then the number of terms which contribute to the above
%sum is $\cO(\delta^{-3}) \times \cO(\delta^{-3})=\cO(\delta^{-6})$ since $\ba(\cL)\geq \delta$ and so the distance
%between any two vectors $\vv_j',\vv_j''$ is at least $\delta.$ Since each term is $\cO(\eps^{-k'+k''})$ part (c) follows.

Part (d) follows directly from Lemma \ref{LmRI}(d). 

To prove part (e)  we note that
$$ \mu(\Phi \fh_{1, \delta})\leq \sqrt{\mu(\Phi^2) \mu(\fh_{1, \delta}^2)}\leq
C \sqrt{ \mu(\fh_{1, \delta}^2)}\leq \brC \delta^{(d+1)/2} $$
where the estimate of $\mu(\Phi^2)$ follows from Lemma \ref{LmRI}(c) and the estimate of
$\mu(\fh_{1, \delta}^2)$ follows from the fact that
$$ \fh_{1, \delta}\leq \cS(\mathbbm{1}_{x^2+z^2\leq (C_1 \delta)^2}) $$
and Lemma \ref{LmRI}(c).

Finally part (f) follows from Proposition \ref{LmMultSol}(b) since $\fh_{2, \delta}\leq 1.$
\end{proof}

 \section{  Independence} \label{appB}
\begin{proof}[Proof of Sublemma \ref{SLOff}]
We start with \eqref{R3}. Observe that 
$$\E(\E(V_{j+1}|\cF_j))=\E(V_{j+1})= \cO\left(\frac{\ln M}{M}\right)$$
where the last step comes directly from (h3) and the fact that there is $\cO(M^{2d-1} \ln M)$ 
terms in $V_{j+1}$.  Let 
\begin{equation}
\label{EPrime}
E_j'=\left\{\omega\in E: \E(V_{j+1}|\cF_j)(\omega)\leq \frac{1}{\sqrt{M}}\right\}.
\end{equation}
Then $\Prob(E_j'^c)=\cO(\ln M/\sqrt{M})$ by 
Markov inequality while \eqref{R3} holds for $\omega \in E_j'$.

We turn now to \eqref{re}. Let $\bt, \bar{\bt} \in \Pi^{j+1}$ such that  
$\lambda_1(\bar{\bt})  \geq \lambda_1({\bt})+3R \ln M$. 

Let now $\hat{\bt}$ be such that 
\begin{equation}\label{hatt} 
\l_1(\bar{\bt}) -R \ln M \geq   \l_1(\hat{\bt}) \geq \l_1(\bt) +R \ln M. 
\end{equation} 
Recall that $\tilde{\bt}$ is such that 
\begin{equation*}   \min_{\bt \in \Pi^{j+1} } \lambda_1({\bt})-R \ln M 
\geq \lambda_1(\tilde{\bt})  \geq  \max_{\bt \in \Pi^j } \lambda_1({\bt})+R \ln M . \end{equation*}
and that $\cF_j= \cF_{\tilde{\bt}}.$
We let  $F_j(\omega):=F_{\tilde{\bt}}(\omega).$ We then consider the partition 
$\tF_\bt=F_\bt\wedge F_j$ and let
$\tcF_\bt$ denote the $\sigma$-algebra generated by $\tF_\bt.$

The idea in addressing \eqref{re} is essentially the following: 
\begin{align*}  \E( \eta_{\bt} \eta_{\bar\bt} | \cF_{j})&\overset{\text{by (h6)}}{\sim}& 
\E \left( \E(\eta_{\bt} \eta_{\bar\bt}|  \tcF_{\hat\bt}) | \cF_{j}\right)
&\overset{\text{by (h5)}}{\sim}& \E \left( \eta_{\bt}  \E( \eta_{\bar\bt}|  \tcF_{\hat\bt}) | \cF_{j}\right) \\
&\overset{\text{by (h4)}}{\sim} & \frac{\bbc \fm(\fX)}{M^d} \E ( \eta_{\bt}  | \cF_{j})  
&\overset{\text{by (h4)}}{\sim} & \frac{\bbc \fm(\fX)}{M^d}  \frac{\bbc \fm(\fX)}{M^d}. \end{align*}

The fact that (h4)--(h6) only hold outside of some exceptional set, makes the argument above incomplete and we now complete it. Due to (h6), we have that  
$F_{\hat{\bt}} (\omega)\subset F_j(\omega)$ for $\omega\in E.$ 
Hence, for $\omega\in E$ we have 
\begin{align*}\E( \eta_{\bt} \eta_{\bar\bt}|\cF_{j})&=&
\E\left(\E\left(\eta_{\bt}|\tcF_{\hat\bt}\right) \eta_{\bar\bt}|\cF_j\right)+\cR_1(\bt, \bar\bt) 
\\
&=&\E\left(\E\left(\eta_{\bt}|\tcF_{\hat\bt}\right) \E\left(\eta_{\bar\bt}|\tcF_{\hat\bt}
\right)|\cF_j\right)+\cR_1  \\
&=&\E\left(\E\left(\eta_{\bt}|\tcF_{\hat\bt}\right) 
\E\left(\eta_{\bar\bt}|\cF_{\hat\bt}\right)|\cF_j\right)+\cR_1 +\cR_2 \\
&=&\E\left(\eta_{\bt} \E\left(\eta_{\bar\bt}|\cF_{\hat\bt}\right)|\cF_j\right)+\cR_1 +\cR_2 +\cR_3 \\
&=&\E\left(\eta_{\bt} \mathbbm{1}_E \E\left(\eta_{\bar\bt}|\cF_{\hat\bt}\right)|\cF_j\right)
+\cR_1 +\cR_2 +\cR_3 
+\cR_4 \\
&\leq&\frac{C}{M^d} \E\left(\eta_\bt|\cF_j\right)+\cR_1 +\cR_2 +\cR_3 +\cR_4 \\
&\leq& \frac{C}{M^{2d}} +\cR_1 +\cR_2 +\cR_3 +\cR_4 \\
\end{align*}
where the inequalities follow from (h4c) and for $i=1,\ldots,4$, $\cR_i=\cR_i(\bt, \bar\bt)$ 
are given by 
\begin{align*}\cR_1&=\E\left(\left[\eta_\bt-\E\left(\eta_\bt|\tcF_{\hat\bt}\right)\right]
\eta_{\bar\bt}\Big|\cF_j\right), \\ 
\cR_2&=\E\left(\E\left(\eta_{\bt}\Big|\tcF_{\hat\bt}\right) \left[\E\left(\eta_{\bar\bt}\Big|\tcF_{\hat\bt}\right)-
\E\left(\eta_{\bar\bt}|\cF_{\hat\bt}\right)\right]\Big|\cF_j\right), \\ 
\cR_3&=\E\left(\left[\E\left(\eta_{\bt}\Big|\cF_{\hat\bt}\right)-
\eta_\bt\right]\E\left(\eta_{\bar\bt}|\cF_{\hat\bt}\right)|\cF_j\right),\\ 
\cR_4&=\E\left(\eta_{\bt} \E\left(\eta_{\bar\bt}|\cF_{\hat\bt}\right)|\cF_j\right)-
\E\left(\eta_{\bt} \mathbbm{1}_E \E\left(\eta_{\bar\bt}|\cF_{\hat\bt}\right)\Big|\cF_j\right). 
\end{align*}
Note that $\cR_l$ are $\cF_j$ measurable for $l\in\{1\dots 4\}.$
We claim that all  of them have $L^1$ norm of order $\cO\left(M^{-100d}\right).$
Indeed, first,
$$ \E\left(\left|\cR_4\right|\right)\leq \Prob(E^c)=\cO\left(M^{-100d}\right). $$
Next
$$ \E\left(\left|\cR_1\right|\right)\leq \E\left(\left|\eta_\bt-
\E\left(\eta_\bt|\tcF_{\hat\bt}\right)\right|\right)$$
$$=
\E\left(\left|\eta_t-\E\left(\eta_t|\tcF_\Ht\right)\right|\mathbbm{1}_E\right)+\cO\left(M^{-100d}\right)=
\cO\left(M^{-100d}\right)
$$
since on $E,$ $\tF_{\hat\bt}(\omega)=F_{\hat\bt}(\omega)$ due to (h6) and hence 
$\E\left(\eta_\bt|\tcF_{\hat\bt}\right)=\E\left(\eta_\bt|\cF_{\hat\bt}\right)=\eta_t$
due to (h5). $\E\left(\left|\cR_3\right|\right)$ 
and $\E\left(\left|\cR_2\right|\right)$ are estimated similarly.

% \eta_{\brt}| \cF_{\hat{t}}) |\cF_{j}\right)=   
%\E\left(   \eta_t \E(\eta_{\bar{t}}|\cF_{\hat{t}}) | \cF_j \right).$$ 
%Next,
%(h4c) implies that  on $E$ 
%$$\E(\eta_{\bar{t}}|\cF_{\hat{t}}) \ll   M^{-2}.$$
%Hence applying (h4c) once more we get that for $\omega\in E$
%$$ \E\left(   \eta_t \E(\eta_{\bar{t}}|\cF_{\hat{t}}) | \cF_j \right)\leq C M^{-2}
%\E\left(   \eta_t  | \cF_j \right)=\cO\left(M^{-4}\right) $$
%proving 
Let now
$$E_j''=\{\omega\in E: \forall \bt, \bar\bt\in \Pi^{j+1}  \text{ and } \hat{\bt} \text { as in \eqref{hatt} }  \forall l\in \{1\dots 4\} \; |\cR_l(\bt, \bar\bt)|\leq M^{-2d}\}. $$
Then by Markov inequality $\Prob((E''_j)^c)=\cO\left(M^{-90d}\right)$ and 
\eqref{re} holds for $\omega\in E_j''.$

Letting $E_j=E_j'\cap E_j''$ where $E_j'$ is given by \eqref{EPrime},
 we finish the proof of Sublemma \ref{SLOff}.
\end{proof}

\bigskip 
\noindent {\sc \bg Acknowledgments. }  We thank the referees for many comments and suggestions that were useful in improving the presentation of the first versions of this paper.
The second author was supported by the grant ANR-15-CE40-0001.

\end{document}